\numberwithin{equation}{section}
\newcommand{\be}{\begin{eqnarray}}
\newcommand{\ee}{\end{eqnarray}}
\newcommand{\ce}{\begin{eqnarray*}}
\newcommand{\de}{\end{eqnarray*}}
\newtheorem{theorem}{Theorem}[section]
\newtheorem{lemma}[theorem]{Lemma}
\newtheorem{remark}[theorem]{Remark}
\newtheorem{definition}[theorem]{Definition}
\newtheorem{proposition}[theorem]{Proposition}
\newtheorem{Examples}[theorem]{Example}
\newtheorem{corollary}[theorem]{Corollary}
\newtheorem{assumption}{Assumption}[section]
\def\e{{\mathrm{e}}}
\def\eps{\varepsilon}
\def\e{\varepsilon}
\def\[{{\Big[}}
\def\]{{\Big]}}
\def\<{{\langle}}
\def\>{{\rangle}}
\def\({{\Big(}}
\def\){{\Big)}}
\def\bx{{\mathbf{x}}}
\def\dif{{\mathord{{\rm d}}}}
\def\min{{\mathord{{\rm min}}}}
\def\={&\!\!=\!\!&}
\def\bt{\begin{theorem}}
\def\et{\end{theorem}}
\def\bl{\begin{lemma}}
\def\el{\end{lemma}}
\def\br{\begin{remark}}
\def\er{\end{remark}}
\def\bas{\begin{assumption}}
\def\eas{\end{assumption}}
\def\bd{\begin{definition}}
\def\ed{\end{definition}}
\def\bp{\begin{proposition}}
\def\ep{\end{proposition}}
\def\bc{\begin{corollary}}
\def\ec{\end{corollary}}
\def\bx{\begin{Examples}}
\def\ex{\end{Examples}}
\def\cB{{\mathcal B}}
\def\cF{{\mathcal F}}
\def\mN{{\mathbb N}}
\def\geq{\geqslant}
\def\leq{\leqslant}
\newcommand\dela[1]{}
\title[LDP  for  stochastic nonlinear Schr\"odinger equations driven by L\'evy noise]{{Large deviation principles for  stochastic nonlinear Schr\"odinger equations driven by L\'evy noise$^\dagger$}
}
  \thanks{$\dagger$  This work is partially supported by National Key R\&D Program of China(No. 2022YFA1006001), National Natural Science Foundation of China (Nos. 12131019, 11971456, 12071433, 12171208, 11831014, 12090011), and the Fundamental Research Funds for the Central Universities (No. WK3470000031).}
   \thanks{$*$ Corresponding author}
\begin{document}

\maketitle
\centerline{\scshape Jiahui Zhu$^a$, Wei Liu$^b$, Jianliang Zhai$^{c,*}$ }
\medskip
 {\footnotesize
\centerline{ $a.$   School of Science, Zhejiang University of Technology, Hangzhou 310019, China }
 \centerline{ $b.$ School of Mathematics and Statistics, Jiangsu Normal University, Xuzhou
221116, China}
 \centerline{$c.$ School of Mathematical Sciences, University of Science and Technology of China, Hefei 230026, China}
 }

\begin{abstract}
In this work we establish a Freidlin-Wentzell type large deviation principle for  stochastic nonlinear Schr\"{o}dinger equation, with either focusing or defocusing nonlinearity, driven by nonlinear multiplicative L\'evy noise in the Marcus canonical form. This task is challenging in the current setting due to the presence of the power-type
nonlinear term, the lack of regularization effect of the Schr\"odinger operator and the absence
of compactness of embeddings. To overcome these difficulties, we employ a regularization procedure based on Yosida approximations and implement techniques such as time discretization, cut-off arguments, and relative entropy estimates of  sequences of probability measures. Our innovative approach circumvents the need for compactness conditions, distinguishing our work from previous studies.

\hspace{1cm}

\textit{Keywords}: Nonlinear Schr\"odinger equation, multiplicative L\'evy noise,  Marcus canonical  form, weak convergence method, large deviation principle

\textit{Mathematics Subject Classification}: 35Q55, 35R60,  60H15,  60G51, 60F10
\end{abstract}

\section{Introduction}
Nonlinear Schr\"odinger equation (NLS) has become an important and influential subject since its inception, mainly due to its  numerous applications in physics.  NLS serves as a universal model for describing  the wave propagation in weakly nonlinear dispersive media.
Its principal applications include, but are not limited to, hydrodynamics,  nonlinear optics, nonlinear acoustic, Bose-Einstein condensate and plasma physics, see e.g. \cite{Tur-12} and the references therein.

To describe the propagation  of  nonlinear dispersive waves through a non-homogeneous or random media, more realistic mathematical models can be constructed by adding suitable noisy driving terms to the equation \cite{DC-01, FK-01, KV-94}. Starting with the works \cite{deB+De-99, deB+De-03}, stochastic NLS with Gaussian noise have been investigated by many researchers, including \cite{BRZ-14,  BRZ-17, BM-14, BHW-19} for the well-posedness, \cite{BRZ-18} for the optimal control, \cite{Gautier05,Gautier051,ZD-22} for Freidlin-Wentzell type large deviation principle (LDP), \cite{HRZ-19} for scattering behavior, \cite{BFZ-23} for ergodic results, \cite{Cao+Su+Zhang, Su+Zhang} for blow-up solutions.
However, Gaussian noise cannot incorporate the possibility of having sudden, sharp changes in the field that commonly occur in real world models.  Compared to the case of stochastic NLS with Gaussian noise, there has been considerably less results on the case of jump noise, with only a small number of papers available in the literature. 
In \cite{deB+Hau-19}, de Bouard and Hausenblas established the existence of a martingale solution of stochastic NLS with linear jump noise in $H^{1,2}\left(\mathbb{R}^n\right)$  and in \cite{deB+Hau-On-19} they proved pathwise uniqueness.
In \cite{BHM-20}, Brze\'zniak et al. constructed a martingale solution of the stochastic NLS with  multiplicative jump noise in $H^1$ on compact manifolds. 
Recently,  Brze\'zniak et al. in \cite{BLZ} established  the existence and uniqueness of global solutions to stochastic nonlinear Schr\"odinger equation with L\'evy noise of Marcus type in $L^2(\mathbb{R}^d)$. To the best of our knowledge, there is no result available such as  ergodicity, optimal control, scattering behavior for stochastic NLS with jump noise. The dearth of existing research on stochastic NLS with jump-type noise provides  a strong impetus for our investigation and this paper presents the first result on Freidlin-Wentzell type LDP for stochastic NLS with jump noise.  The large deviation principles mentioned in this paper are all of Freidlin-Wentzell type and will be referred as LDP for the sake of convenience.

The LDP describes the limiting behavior of  the laws of solutions when the noise in the equation converges to zero, in terms of a rate function.
 On one hand,  large deviation
techniques are precious tools to  study  the first exit time from a neighborhood of an
asymptotically stable equilibrium point, the exit place determination or the transition between two equilibrium points in randomly perturbed dynamical systems, which is
important in the fields of statistical and
quantum mechanics, chemical reactions etc, see e.g. \cite{Dembo-Zeitouni-1998, Freidlin-Wentzell-1998}.  In theoretical physics, the techniques used to study these exit problems are often called optimal
fluctuations or instanton formalism and are closely related to large deviations.
On the other hand, LDP allows researchers to identify and analyze rare but extreme events, and it is a crucial tool in interpreting rogue waves in the ocean \cite{DGOV,DGV,DP-11}, as well as other relevant physical systems where an understanding of extreme events is important. Rogue waves are a particularly dangerous hazard that can cause significant damage to marine and coastal environments \cite{Nik+Did}. Estimating the likelihood of appearance of rogue waves is  an important and challenging problem with practical implications for the safety and resilience of coastal infrastructure.
Recently, the NLS has been introduced to study  hydrodynamic rogue waves generated by nonlinear energy transfer in the open ocean \cite{HP-99}, as well as optical rogue waves in an optical system \cite{So+Ro+KJ}. It has emerged as a well-suited model for the evolution of unidirectional, narrow-banded surface wave fields in deep sea \cite{ DP-11}.  Besides, both experimental and numerical evidence indicate that rogue waves
 can be described as  hydrodynamic instantons within the framework of large deviation  \cite{DGOV,DGV}.
  These  further  motivate our study of the LDP for stochastic NLS and we hope that the theory developed in this work may contribute to  deeper understanding of the behavior of various other dynamical systems and their mechanism of creation of extreme events.
 \vskip 0.2cm

The aim of this paper is to establish LDP for the solutions of following stochastic nonlinear Schr\"{o}dinger equation with pure jump noise in Marcus canonical form
\begin{align}
\begin{split}\label{eq X0-1}
\mathrm{d} u^{\varepsilon}(t)&=\mathrm{i} \left[\Delta u^{\varepsilon}(t)-\lambda|u^{\varepsilon}(t)|^{2 \sigma} u^{\varepsilon}(t)\right] \mathrm{d} t-\varepsilon\, \mathrm{i} \sum_{j=1}^m g_{j}(u^\varepsilon(t-))\diamond \dif L_j^{\varepsilon^{-1}}(t),\ t\in[0,T], \\
 u^{\varepsilon}(0)& =u_0,
 \end{split}
\end{align}
where $\varepsilon>0$ is a small parameter converging to 0, $\lambda\in\mathbb{R}$, $(L_1^{\varepsilon^{-1}}(t),...,L_m^{\varepsilon^{-1}}(t))=L^{\varepsilon^{-1}}(t):=\int_0^t\int_B z\tilde{N}^{\varepsilon^{-1}}(\mathrm{d} z,\mathrm{d}s)$. Here $B=\{z\in\mathbb{R}^m:0<|z|\leq 1\}$ and $\tilde{N}^{\varepsilon^{-1}}$ is a compensated Poisson random measure on $[0, T] \times Z$ with a $\sigma$-finite mean measure $\varepsilon^{-1} \lambda_T \otimes \nu$, where $\lambda_T$ is the Lebesgue measure on $[0, T]$ and $\nu$ is a $\sigma$-finite measure on $B$. The precise definition of the noise and the Marcus product $\diamond$ is provided in Section \ref{sec-assp}. We remark that the nonlinearity with coefficient $\lambda>0$ is called defocusing case and $\lambda<0$  focusing case.

From the physical point of view, Stratonovich integration is preferred in Gaussian noise-driven systems \cite{Brz+Gol+Jeg, deB+De-99}. It allows us to treat stochastic integrals according to the conventional integration rules, and it can be interpreted as Wong-Zakai type approximation, which is useful as a numerical method for approximations of solutions of SPDEs.
 However, for systems with jump noises, those nice features are not exhibited by the Stratonovich prescription. The choice of Marcus canonical noise in our equation \eqref{eq X0-1} has the same physical bases and pertains remarkably similar properties as Stratonovich noise in the continuous case \cite{Brz+Man-19}.  Base on this perspective, the Marcus prescription is a more suitable tool for describing physical systems with jump noises; see Remark \ref{remark-marcus-int} for more details. Concerning the NLS, Marcus canonical type L\'evy noise guarantees the conservation of the $L^2(\mathbb{R}^d)$-norm of the solution which is the natural and consistent generalization of the deterministic NLS \cite{BLZ}.  It is noteworthy that stochastic partial differential equations (SPDEs) driven by L\'evy noise in the Marcus canonical form have only recently been explored, see e.g. \cite{Brz+Man-19-1,Brz+Man-19} for the existence result of the Landau-Lifshitz-Gilbert equation, \cite{BLZ, BHM-20} for the existence results of  nonlinear Schr\"odinger equations and \cite{MP-21} for well-posedness and LDP of the 2D constrained Navier-Stokes equations. Due to the appearance of the additional discontinuous component corresponding to the Marcus type noise, see equation \eqref{eq X0}, specialized techniques and tools are required, which distinguishes the current work from the previous studies on LDP for SPDEs driven by It\^{o}-type L\'evy noises, e.g. \cite{BPZ, LSZ}.  We believe that the present study may offer new insights into the behavior of SPDEs driven by L\'evy noise.
We would like to make a short remark that the methods to obtain LDP used in \cite{MP-21}, which heavily rely on the
compactness condition for the Gelfand triples, cannot be applied to prove our result. One of the main difference is that our setting does not have the compactness of embeddings.
We will provide further details about this difference later in the introduction.


In the last few decades, the LDP has been established for many important equations in mathematical physics (see e.g. \cite{BPZ, Brz+Gol+Jeg, BD-19, Budhiraja-Dupuis-Maroulas., DWZZ-20, HLL1, HLL2, LSZ, MSZ-21} and the references therein).  Concerning NLS,   the LDP for Gaussian noise has been studied in several works \cite{Gautier05, Gautier051, ZD-22}.
Gautier in \cite{Gautier05} obtained a  LDP for stochastic NLS with additive Gaussian noise, and continued his work in \cite{Gautier051} by proving a uniform LDP for linear multiplicative Gaussian noise. 
Zhang in \cite{ZD-22} also established  a LDP for stochastic NLS with  linear multiplicative Gaussian noise. The proofs of these works relied on the Varadhan contraction principle. 
An important point to mention here is that if the noise in stochastic NLS is additive or linear multiplicative, the Varadhan contraction principle can be implemented, which involves constructing a continuous function. However, in the case of nonlinear multiplicative noise, constructing a continuous function is either tedious or infeasible, and the problem also becomes much more complicated in the jump noise case. Moreover, there are significant differences between the Wiener case
and  L\'evy case, and applying the methods in  \cite{Gautier05, Gautier051, ZD-22} to deal with our problem  is extremely difficult,  if not impossible at all,  even for the cases of additive or linear multiplicative L\'evy  noises.
This paper presents the first result on LDP for stochastic NLS incorporating nonlinear noise. 
 New and effective ideas have been employed to obtain our
result.   We believe that  these ideas proposed in this paper could be used to obtain the LDP for stochastic NLS with nonlinear multiplicative Gaussian noise,
which appears also to be open problems.

The LDP is a consequence of the weak approach to large deviations
established in \cite{Budhiraja-Dupuis-Maroulas.} \cite{Budhiraja-Chen-Dupuis}, as well as an improved version \cite{LSZ}. The proof of the LDP relies on establishing two facts. First, we establish the well-posedness, some appropriate convergence of the solutions of the corresponding skeleton equation. Second, we require  the solutions of the  controlled stochastic NLS  converge to the solution of the skeleton equation; see Section \ref{sec-SE}, Propositions \ref{Yu-con} and \ref{lem LDP 2} in Section \ref{sec-proof-Th} respectively.

The primary difficulty of our proof stems from the power-nonlineary and the lack of regularization effect of the Schr\"odinger operator. Particularly, the Schr\"odinger operator generates a unitary group on $H^s$, $s\geq 0$, which is not good enough for the large deviation estimates. To tackle these issues, we employ a regularization procedure based on Yosida approximations to obtain the well-posedness and a priori estimates for the approximate skeleton equation by means of Strichartz estimates. Thanks to the characteristic of the Marcus-type noise, the $L^2$-norm of the solutions of the approximate skeleton equation is conserved and we are able to prove LDP in the same space of the solutions.

 The second main difficulty of our proof is that in our setting there is no compactness of embeddings. In this respect it differs from most of previous papers on LDP for SPDEs with L\'evy noise, notably \cite{BPZ,   LSZ, MP-21, YZZ}, in which their proofs  crucially depended on the compactness condition. The only work towards LDP for stochastic evolution equations with L\'evy noise without compactness conditions is a 
 recent paper  \cite{WZ-22}, in which the authors established a LDP for a class of stochastic porous media equations driven by It\^{o}-type L\'evy noise.
   However, due to the lack of regularization effect of the Schr\"odinger operator and the different natures of the nonlinearity in our equation compared to \cite{WZ-22} and those in previous studies, our problem is significantly harder. Therefore, new methods need to be employed to address these challenges. 
   
To demonstrate the convergence of the associated skeleton equation, we first approximate the nonlinear multiplicative noise coefficients by a family of nicer nonlinearities. To be more precise, we regularize the nonlinear noise coefficients by applying $\mu(\mu I -\Delta)^{-1}$ and introduce another auxiliary approximation processes $\Phi^n_{\mu}$, see \eqref{eq LDP 4}. We establish some uniform estimates of the sequence $\Phi^n_{\mu}$ and prove the convergence of  the corresponding skeleton equation with the help of $\Phi^n_{\mu}$. The proof of the convergence also involves a discretization argument and the use of relative entropy estimates of a sequence of probability measures.
To complete the proof of LDP, it is also necessary to verify the convergence in distribution of the controlled stochastic equation to the corresponding skeleton equation in  $L^{\infty}(0, T; L^2(\mathbb{R}^d)) \cap L^{p}(0, T; L^r(\mathbb{R}^d))$.  This is a non-trivial task as we need to find uniform estimates for the solutions of controlled stochastic equation in the space $L^{p}(0,T;L^r(\mathbb{R}^d))$.   In addition,  some localization arguments and the conservation laws  are employed to establish the convergence of the solutions of the controlled stochastic NLS. Due to the special features of the Marcus noise, some other technical difficulties have also to be overcome in the proof.
We point out that our results allow the full range of subcritical exponents $\sigma\in(0,\frac{2}{d})$, compared to \cite{deB+De-99}.

The rest of the paper is organized as follows. In Section \ref{sec-assp}, we describe our framework precisely and state the main result. In Section \ref{sec-SE}, we mainly analyze the solution to the skeleton equation. Section \ref{sec-proof-Th} is devoted to the proof of the large deviation principle.  In the end of this section, we state two main sufficient conditions for establishing the LDP, where their proofs are given in
 Section \ref{sect-prop-1} and \ref{sect-prop-2} respectively.

\section{Assumptions and statement of the main result}\label{sec-assp}

We begin with a brief introduction to some relevant notations used in this paper.
Set $\mathbb{N}:=\{1,2,3,\cdots\}$, $\mathbb{R}:=(-\infty,\infty)$ and $\mathbb{R}_+:=[0,\infty)$. For a metric space $\mathcal{H}$, the Borel $\sigma$-field on $\mathcal{H}$ will be written as
$\mathcal{B}(\mathcal{H})$. We denote by $C_c(\mathcal{H})$ the space of real-valued continuous functions with compact supports. Let $C([0,T];\mathcal{H})$ be the space of continuous functions $f:[0,T]\rightarrow \mathcal{H}$ endowed with the uniform convergence topology. Let $D([0,T];\mathcal{H})$ be the space of all c\`adl\`ag functions $f:[0,T]\rightarrow \mathcal{H}$ endowed with the Skorokhod topology. For an $\mathcal{H}$-valued measurable map
$X$ defined on some probability space $(\Omega,\mathcal{F},P)$ we use $Law(X)$ to denote the measure induced by $X$ on $(\mathcal{H},\mathcal{B}(\mathcal{H}))$.
 We use the symbol $``\Rightarrow"$
to denote the convergence in distribution.

For a locally compact Polish space $\mathcal{H}$, the space of all Borel measures on $\mathcal{H}$ is denoted by $M(\mathcal{H})$, and $M_{FC}(\mathcal{H})$ denotes the set of all  $\mu\in M(\mathcal{H})$ with $\mu(O)<\infty$
for each compact subset $O\subseteq \mathcal{H}$. We endow $M_{FC}(\mathcal{H})$ with the weakest topology such that for each $f\in C_c(\mathcal{H})$ the mapping
$\mu\in M_{FC}(\mathcal{H})\rightarrow \int_\mathcal{H}f(s)\mu(\dif s)\in\mathbb{R}$ is continuous. This topology is metrizable such that $M_{FC}(\mathcal{H})$ is a Polish space, see \cite{Budhiraja-Dupuis-Maroulas.} for more details.

We fix $T>0$ and $m\in\mathbb{N}$ throughout this paper.  Let $B=\{z\in\mathbb{R}^m:0<|z|\leq 1\}$, and $\nu$ a given $\sigma$-finite intensity measure of a L\'evy process on $B$, see \cite{Sato 1999}, that is, $\nu$ is a  $\sigma$-finite Borel measure  on $B$  satisfying
\begin{eqnarray}\label{Assump on nu}
    \int_B|z|^2\nu(\dif z)<\infty.
\end{eqnarray}
Clearly, $\nu\in M_{FC}(B)$.
 In this paper, the  probability space $(\Omega, \cF, {\mathbb F}:=\{\cF_t\}_{t\in [0,T]},P)$ is specified as follows
\begin{align*}
  \Omega:= M_{FC}\big([0,T]\times B
  \times \mathbb{R}_+\big),\qquad \cF:=\cB(\Omega).
\end{align*}
We introduce the coordinate mappings
\begin{align*}
 N\colon \Omega \rightarrow M_{FC}\big([0,T]\times B\times \mathbb{R}_+\big),\qquad  N(\omega)=\omega.
\end{align*}
Define for each $t\in [0,T]$ the $\sigma$-algebra
\begin{align*}
\mathcal{G}_{t}:=\sigma\left(\left\{N((0,s]\times A):\,
0\leq s\leq t,\,A\in \mathcal{B}\big(B\times \mathbb{R}_+\big)\right\}\right).
\end{align*}
For the given $\nu$, it follows from \cite[Sec.I.8]{Ikeda-Watanabe} that there exists a unique probability measure $P$
 on $(\Omega,\mathcal{F})$ such that 
 $N$ is a Poisson random measure (PRM) on $[0,T]\times B\times\mathbb{R}_+$ with intensity measure $\text{Leb}_T\otimes\nu\otimes \text{Leb}_\infty$, where
$\text{Leb}_T$ and $\text{Leb}_\infty$ stand for the Lebesgue measures on $[0,T]$ and $\mathbb{R}_+$ respectively.

We denote by $\mathbb{F}:=\{{\mathcal{F}}_{t}\}_{t\in[0,T]}$ the
$P$-completion of $\{\mathcal{G}_{t}\}_{t\in[0,T]}$ and
$\mathcal P$ the $\mathbb{F}$-predictable $\sigma$-field
on $[0,T]\times \Omega$. The  PRM $N$
is defined on  the (filtered) probability space $(\Omega, \cF, \mathbb{F}:=\{\cF_t\}_{t\in [0,T]},P)$.
and the corresponding compensated PRM is denoted by $\widetilde{N}$.

Denote
\begin{align*}
{\mathcal R_+}
=\left\{\varphi\colon [0,T]\times \Omega\times B\to \mathbb{R}_+: \varphi\ \text{is}
\, (\mathcal{P}\otimes\mathcal{B}(B))/\mathcal{B}(\mathbb{R}_+)\text{-measurable}\right\}.
\end{align*}
For any $\varphi\in{\mathcal R_+}$, $N^{\varphi}:\Omega\rightarrow M_{FC}([0,T]\times B)$ is  a
counting process  on $[0,T]\times {B}$ defined by
   \begin{align}\label{Jump-representation}
      N^\varphi((0,t]\times A)=\int_{(0,t]\times A\times \mathbb{R}_+}1_{[0,\varphi(s,z)]}(r)\, N(\dif s, \dif z, \dif r),\ 0\le t\le T,\ A\in\mathcal{B}(B).
   \end{align}
Here $N^\varphi$ can be viewed as a controlled random measure, with $\varphi$ selecting the intensity.

Analogously, $\widetilde{N}^\varphi$ is defined by replacing $N$ with $\widetilde{N}$ in (\ref{Jump-representation}). When $\varphi\equiv c>0$, we write $N^\varphi=N^c$ and $\widetilde{N}^\varphi=\widetilde{N}^c$.

\vskip 0.3cm

For each measurable function
$g\colon [0,T]\times B\to [0,\infty)$, define
\begin{align*}
Q(g):=\int_{[0,T]\times B}\ell\big(g(s,z)\big) \, \nu(\dif z)\dif s,
\end{align*}
where $\ell(x)=x\log x-x +1,\ \ell(0):=1$.
For each $N>0$, denote
\begin{align}\label{S_N}
     W^N:=\Big\{g:[0,T]\times B\rightarrow[0,\infty):\,Q(g)\leq N\Big\}.
\end{align}
Any measurable function $g\in W^N$ can be identified with a measure $\nu^g_T\in M_{FC}([0,T]\times B)$, defined by
   \begin{align}\label{eq.corres-func-meas}
      \nu^g_T(A)=\int_A g(s,z)\, \nu_T(\dif s,\dif z),\ \forall A\in\mathcal{B}([0,T]\times B),
   \end{align}
where we set $\nu_T=\text{Leb}_T\otimes\nu$.
This identification induces a topology on $W^N$ under which $W^N$ is a compact space (see the Appendix of \cite{Budhiraja-Chen-Dupuis}).
Throughout the paper, we use this topology on $W^N$.

Denote
\begin{eqnarray}\label{eq P22}
W:= \bigcup_{N\in\mathbb{N}}W^N.
\end{eqnarray}

Let $\{Z_n=\{z\in B:\frac{1}{n}\leq |z|\leq 1\}\}_{n\in\mN}$ then $Z_n\subseteq B$,
$ Z_n \nearrow B$ and $\nu(Z_n)<\infty$.  For each $n\in\mN$, let
\begin{align}\label{Eq Rn}
     \mathcal{R}_{b,n}
= \Big\{\psi\in \mathcal{R}_+:
\psi(t,z,\omega)\in \begin{cases}
 [\tfrac{1}{n},n], &\text{if }z\in Z_n\\
\{1\}, &\text{if }z\in Z_n^c
\end{cases}
\text{ for all }(t,\omega)\in [0,T]\times \Omega
\Big\},
\end{align}
and $\mathcal{R}_{b}=\bigcup _{n=1}^\infty \mathcal{R}_{b,n}$.
For any $N\in(0,\infty)$, let $\mathcal{W}^N$ be a space of stochastic processes on $\Omega$ defined by
\begin{align*}
\mathcal{W}^N:=\{\psi\in  \mathcal{R}_{b}:\, \psi(\cdot,\cdot,\omega)\in W^N
\text{ for $P$-a.e. $\omega\in \Omega$}\}.
\end{align*}

In this paper, we make the following assumption on the noise coefficients $g_j$, $j=1,\cdots,m$:
\begin{assumption}\label{assm-g}
For each $1 \leq j \leq m$, there exists a function $\tilde{g}_{j}:[0, \infty) \rightarrow \mathbb{R}$ of class $C^{1}$ such that $g_{j}$ is given by $g_{j}(y)=\tilde{g}_{j}\left(|y|^{2}\right) y, y \in \mathbb{C}$. We also assume that there exist constants $L_{1}, L_{2}>0$ such that for all $x, y \in \mathbb{C}$
\begin{align}
\max _{1 \leq j \leq m}\left|g_{j}(x)-g_{j}(y)\right| & \leq L_{1}|x-y|, \label{cond-g-1}\\
\max _{1 \leq j, k \leq m}\left|g_{j}^{\prime}(x) g_{k}(x)-g_{j}^{\prime}(y) g_{k}(y)\right| & \leq L_{2}|x-y| .\label{cond-g-2}
\end{align}
\end{assumption}

\begin{remark}
This kind of nonlinearities $g_j$ are known as saturation nonlinearities. In optics, such nonlinearities are often used to describe propagation of intense beams through saturable media, such as a photorefracctive crystal, where the refraction index changes to an upper bound and then ceases to increase with the increasing of the intensity. 

Assume that $\tilde{g}_j:[0, \infty) \rightarrow \mathbb{R}$ is twice continuously differentiable and satisfies
\begin{align}\label{Ass-g-boundedness-1}
\max_{1\leq j\leq m}\sup _{\theta>0} \big[\tilde{g}_j(\theta)+(1+\theta)\tilde{g}_j^{\prime}(\theta)+(1+\theta^{\frac32})\tilde{g}_j^{\prime\prime}(\theta)\big]<\infty.
\end{align} 
Consequently, the function $g_j$ meets the conditions outlined in  Assumption \ref{assm-g}. In particular, for each $j$, we can choose $\tilde{g}_j$ to be either the saturation of the intensity nonlinearity $\tilde{g}_j(\theta)=\frac{\theta}{1+\rho \theta}$, with $\rho>0$,  which characterizes photorefractive media \cite{Kelley}, or the square-root nonlinearity $\tilde{g}_j(\theta)=1-\frac1{(1+\theta)^{1/2}}$, which describes narrow-gap semiconductors \cite{SBB}. We can also choose $ \tilde{g}_j(\theta)=\frac{\theta(2+\rho \theta)}{(1+\rho \theta)^2}$ or  $\tilde{g}_j(\theta)=\frac{\log (1+\rho \theta)}{1+\log (1+\rho \theta)}$,  $\theta\geq 0$, with $\rho>0$. 

\end{remark}

Let $\Phi^{\varepsilon}(z, x)$ be the value at time $t=1$ of the solution of the following equation: for each $t\in[0,T]$, $z\in B$, $x\in\mathbb{C}$,
\begin{align}
\begin{split}
\frac{\partial \Phi^{\varepsilon}}{\partial t}(t, z, x)&=-\varepsilon\,\mathrm{i} \sum_{j=1}^{m} z_{j} g_{j}(\Phi^{\varepsilon}(t, z, x)), \\
 \Phi^{\varepsilon}(0, z, x)&=x.
  \end{split}
\end{align}
Then equation \eqref{eq X0-1} with notation $\diamond$ is defined in the integral form as follows
\begin{align}
\begin{split}\label{eq X0}
\mathrm{d} u^{\varepsilon}(t)=&\mathrm{i} \left[\Delta u^{\varepsilon}(t)-\lambda|u^{\varepsilon}(t)|^{2 \sigma} u^{\varepsilon}(t)\right] \mathrm{d} t+\int_{B}[\Phi^\varepsilon(z, u^{\varepsilon}(t-))-u^{\varepsilon}(t-)] \tilde{N}^{\varepsilon^{-1}}(\mathrm{d} t, \mathrm{d} z) \\
&+\varepsilon^{-1}\int_{B}\Big[\Phi^\varepsilon(z, u^\varepsilon(t))-u^\varepsilon(t)+\varepsilon\, \mathrm{i}  \sum_{j=1}^{m} z_{j} g_{j}(u^\varepsilon(t))\Big] \nu(\mathrm{d} z) \mathrm{d} t, \quad t\in[0,T],\\
 u^{\varepsilon}(0)=&u_0.
 \end{split}
\end{align}
In the following remark, we use a simple example to clarify the intuition and advantages of utilizing the Marcus canonical integrals.
\begin{remark}\label{remark-marcus-int}
The Marcus canonical integral $\diamond$ has several beneficial consequences and can be seen as extensions of Stratonovich
integrals for continuous case. Notably, it ensures the consistency with a Wong-Zakai-type approximation implying that the jump in the differential may be approximated sensibly by piecewise linear continuous functions. More precisely,  consider the following piecewise linear approximation of the L\'evy process $L(t)=(L_j(t))_{j=1}^m:=\int_0^t\int_B z\tilde{N}(\mathrm{d}z,\mathrm{d} s)$:
\begin{align*}
    L^n_j(t)=L_j(t_i^n)+\frac{ L_j(t_{i+1}^n)-L_j(t_{i}^n)    }{t_{i+1}^n- t_{i}^n}(t-t_{i}^n),\quad t_i^n\leq t\leq t_{i+1}^n
\end{align*}
where $0=t_0^n<t_1^n<\cdots<t_{N(n)}^n=T$ is a partition of the interval $[0,T]$. Then as the mesh $\max\limits_{1\leq j\leq N(n)}|t_{j}^n-t_{j-1}^n|\rightarrow0$, the solution of the following ordinary differential equation
\begin{align*}
   X^n(t)=X_0+\sum_{j=1}^m \int_0^t b_j(X^n(s))\mathrm{d} L_j^n(s)
\end{align*}
converges in probability to
\begin{align}
   X(t)&=X_0 + \int_0^t b(X(s-))\diamond \mathrm{d} L(s)\nonumber\\
   &=X_0 +\sum_{j=1}^m \int_0^t b_j(X(s-))\diamond \mathrm{d} L_j(s)\label{marcus-SDE-eq1}\\
   &:=X_0+ \int_0^t\int_B\big[ \phi(1,z,X(s-))-X(s-)\big] \tilde{N}(\mathrm{d} s, \mathrm{d} z )\nonumber\\
   &\ \ \ \ + \int_0^t\int_B\big[  \phi(1,z,X(s))-X(s) -\sum_{j=1}^mb_j(X(s))z_j \big]\nu(\mathrm{d} z)\mathrm{d} s,\nonumber
\end{align}
where $X_0\in\mathbb{R}^d$ is a deterministic initial, $b=(b_1,\cdots,b_m)$, $b_j\in C^1(\mathbb{R}^d;\mathbb{R}^d)$, $j=1,\cdots,m$ and $\phi:[0,1]\times \mathbb{R}^m \times \mathbb{R}^d  \ni (t,z,x) \rightarrow \phi(t,z,x)\in \mathbb{R}^d $ denotes the value at time $t$ of the solution of the following ordinary differential equation
\begin{align*}
\left\{\begin{array}{l}
y'(t)=\sum_{j=1}^m b_j(y(t))z_j, \\
y(0)=x.
\end{array}\right.
\end{align*}
We refer to \cite{Marcus-78} and \cite{KPP-95} for more details. 
The second benefit is the validity of the change of variables formula, that is, let $f\in C^2(\mathbb{R}^d;\mathbb{R}^d)$, then we have
\begin{align*}
    f(X(t))-f(X_0)=\int_0^t (f'(X(s-))b(X(s-))) \diamond\mathrm{d} L(s).
\end{align*}
Another benefit of the Marcus noise is its invariance under a change of coordinates, which is crucial for obtaining the conservation of the $L^2(\mathbb{R}^d)$-norm of solutions for stochastic NLS, see \cite[proof of Proposition 4.1]{BLZ}. Let $\psi:\mathbb{R}^d\rightarrow \mathbb{R}^d$ be a $C^1$-diffeomorphism. Define the change of coordinate
\begin{align}\label{change-of-coor}
y:=\psi(x)
\end{align}
with the inverse $x=\psi^{-1}(y)$. Under the change of coordinate \eqref{change-of-coor}, $X$ is a solution to  \eqref{marcus-SDE-eq1} if and only if the process $Y(t):=\psi(X(t))$ is a solution to
\begin{align*}
   \mathrm{d}Y(t)= \sum_{j=1}^m\hat{b}_j(Y(t-))\diamond \mathrm{d} L_j(t)
\end{align*}
where 
$$
      \hat{b}_j: \mathbb{R}^d \ni y \mapsto \psi^{\prime}(\psi^{-1}( y)) b_j(\psi^{-1} (y)) \in \mathbb{R}^d.
$$
This invariance result was first stated and proved in \cite{Marcus-78} and then generalized in \cite{Brz+Man-19} to infinite dimensional case for Marcus canonical integrals.
\end{remark}

\begin{remark}
Consider the linear operator $A_1$ in $L^2(\mathbb{R}^d)$ defined by
\begin{equation*}
\left\{\begin{array}{l}
\mathcal{D}(A_1)=\left\{u \in H^{1}(\mathbb{R}^d), \Delta u \in L^2(\mathbb{R}^d)\right\}, \\
A_1 u=\mathrm{i} \Delta u, \quad \forall u \in \mathcal{D}(A_1),
\end{array}\right.
\end{equation*}
and another linear operator $A_2$ in $H^{-1}(\mathbb{R}^d)$ defined by
\begin{equation*}
\left\{\begin{array}{l}
\mathcal{D}(A_2)=H^{1}(\mathbb{R}^d), \\
A_2 u=\mathrm{i} \Delta u, \quad \forall u \in \mathcal{D}(A_2) .
\end{array}\right.
\end{equation*}
Let $G(A_1)$ and $G(A_2)$ be the graph of $A_1$ and $A_2$, respectively, that is,
$$
G(A_1)=\{(u,f)\in L^2(\mathbb{R}^d)\times L^2(\mathbb{R}^d): u\in \mathcal{D}(A_1)\text{ and }f=A_1 u\}
$$
and
$$
G(A_2)=\{(u,f)\in H^{-1}(\mathbb{R}^d)\times H^{-1}(\mathbb{R}^d): u\in \mathcal{D}(A_2)\text{ and }f=A_2 u\}.
$$
Note that both $A_1$ and $A_2$ are skew-adjoint and $G(A_1)\subset G(A_2)$. Let $(S^1_t)_{t \in\mathbb{R}}$ and  $(S^2_t)_{t\in\mathbb{R}}$ be the isometry groups generated by $A_1$ and $A_2$ respectively. It follows from \cite[Lemma 3.5.12]{Caz+Har} that $S^1_tx=S^2_tx$ for all $x\in L^2(\mathbb{R}^d)$, $t\in\mathbb{R}$. This allows us to denote the isometry groups generated by $A_1$ and $A_2$ both by $(S_t)_{t \in\mathbb{R}}$.

\end{remark}

Recall that a pair $(p, r) \in [2,\infty]^2$ is admissible if
$$
\frac{2}{p}=\frac{d}{2}-\frac{d}{r},\quad (p,r,d)\neq (2,\infty,2).
$$
With this definition, for different dimensions, we have 
\begin{align*}
\begin{cases}2 \leq r \leq \infty, & \text { if } \quad d=1 \\ 2 \leq r<\infty, & \text { if } \quad d=2 \\ 2 \leq r \leq \frac{2 d}{d-2}, & \text { if } \quad d \geq 3.\end{cases}
\end{align*}

By \cite[Theorem 1.1]{BLZ}, we have the following result.
\begin{proposition}\label{Prop solution}
  Let $p\geq 2$, $\sigma\in(0,\frac{2}{d})$, $r=2\sigma+2$ such that $(p,r)$ is an admissible pair, and $u_0\in L^2(\mathbb{R}^d)$. Under Assumption \ref{assm-g}, there exists a unique global mild solution $u^\varepsilon=(u^\varepsilon(t))_{t\in[0,T]}$ of (\ref{eq X0-1}), that is, $u^\varepsilon=(u^\varepsilon(t))_{t\in[0,T]}$ is an $\mathbb{F}$-adapted, c\`adl\`ag, $L^2(\mathbb{R}^d)$-valued process satisfying 
  $$
  u^\varepsilon\in L^p(\Omega;L^\infty(0,T;L^2(\mathbb{R}^d))\cap L^p(0,T;L^r(\mathbb{R}^d))),
  $$
  and for $t\in[0,T]$, 
  \begin{align}
\begin{split}
u^\varepsilon(t)=& S_{t} u_{0}-\mathrm{i} \int_{0}^{t } S_{t -s}(\lambda|u^\varepsilon(s)|^{2 \sigma} u^\varepsilon(s)) \mathrm{d} s+\int_{0}^{t} \int_{B}  S_{t-s}\left[\Phi^\varepsilon(z, u^\varepsilon(s- ))-u^\varepsilon(s- )\right] \tilde{N}^{\varepsilon^{-1}}(\mathrm{d} s, \mathrm{d} z)\nonumber\\
&+\varepsilon^{-1} \int_{0}^{t } \int_{B} S_{t -s}\Big[\Phi^\varepsilon(z, u^\varepsilon(s)))-u^{\varepsilon}(s)+\varepsilon\, \mathrm{i} \sum_{j=1}^{m} z_{j} g_{j}(u^\varepsilon(s))\Big] \nu(\mathrm{d} z) \mathrm{d} s,\quad\mathbb{P}\text{-a.s.}
 \end{split}
\end{align}
  Moreover,  for all $t\in[0,T]$,
  $$
  \|u^\varepsilon(t)\|_{L^2(\mathbb{R}^d)}=\|u_0\|_{L^2(\mathbb{R}^d)},\ \mathbb{P}\text{-a.s.}
  $$
\end{proposition}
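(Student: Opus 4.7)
The statement is essentially imported from \cite[Theorem 1.1]{BLZ}, so my plan is to recall the strategy that underlies that result, adapted to the notation of the current paper. The proof proceeds in three main stages: an $L^2$-isometry observation, a contraction argument in Strichartz spaces for local well-posedness, and globalization via the conservation law.

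\textbf{Step 1 (pointwise isometry of the Marcus flow).} First I would observe that, because $g_j(y)=\tilde g_j(|y|^2)y$ with $\tilde g_j$ real-valued, the Marcus ODE
\[
\tfrac{\partial}{\partial t}\Phi^\varepsilon(t,z,x)=-\varepsilon\,\mathrm i\sum_{j=1}^m z_j\tilde g_j(|\Phi^\varepsilon|^2)\Phi^\varepsilon
\]
yields $\frac{d}{dt}|\Phi^\varepsilon(t,z,x)|^2=2\mathrm{Re}\bigl(\overline{\Phi^\varepsilon}\,\partial_t\Phi^\varepsilon\bigr)=0$, so $|\Phi^\varepsilon(t,z,x)|=|x|$ for every $(t,z)$. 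Consequently $\|\Phi^\varepsilon(z,u)\|_{L^2}=\|u\|_{L^2}$ pointwise in $z$, and Assumption \ref{assm-g} combined with Gronwall applied to the ODE gives the global Lipschitz bound $|\Phi^\varepsilon(z,x)-\Phi^\varepsilon(z,y)|\le C(|z|)|x-y|$, together with the cancellation $\Phi^\varepsilon(z,x)-x+\varepsilon\,\mathrm i\sum_j z_j g_j(x)=O(|z|^2)$ needed to control the compensator drift via \eqref{Assump on nu}.

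\textbf{Step 2 (local well-posedness via Strichartz/fixed point).} For the admissible pair $(p,r)$ with $r=2\sigma+2$, I would solve the mild formulation by a Banach fixed point argument in
\[
X_T:=L^p(\Omega;L^\infty(0,T;L^2(\mathbb{R}^d))\cap L^p(0,T;L^r(\mathbb{R}^d))).
\]
The deterministic Strichartz inequalities handle the linear group $S_t u_0$ and, via the dual Strichartz estimate, the nonlinear drift: $\|\int_0^t S_{t-s}(|u|^{2\sigma}u)\,ds\|_{X_T}\lesssim T^\theta\|u\|_{X_T}^{2\sigma+1}$ for some $\theta>0$ because $\sigma<2/d$ is $L^2$-subcritical. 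For the stochastic integral with respect to $\widetilde N^{\varepsilon^{-1}}$ I would use a Burkholder-type inequality for compensated Poisson integrals together with the Lipschitz property in Step 1 to bound it in $L^p(\Omega;L^\infty(0,T;L^2))$; the $L^p(0,T;L^r)$ piece is controlled by dual Strichartz applied pathwise once the $L^2$ part is under control. The compensator term is estimated using $\int_B |z|^2\,\nu(dz)<\infty$ and the $O(|z|^2)$ cancellation of Step 1. Choosing $T$ small (depending on $\|u_0\|_{L^2}$) one obtains a contraction, yielding a unique local solution; for large $T$ one can iterate because the fixed-point time depends only on the $L^2$-norm, which is preserved, as shown next.

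\textbf{Step 3 (conservation of the $L^2$-norm and globalization).} The key point is that the Marcus integral obeys the ordinary chain rule. Applying the Marcus-It\^o formula to $\|u^\varepsilon(t)\|_{L^2}^2$ (first on a truncated/regularized version, then passing to the limit) gives
\[
\|u^\varepsilon(t)\|_{L^2}^2=\|u_0\|_{L^2}^2+2\mathrm{Re}\!\int_0^t\!\!\bigl\langle u^\varepsilon,\mathrm i\Delta u^\varepsilon-\mathrm i\lambda|u^\varepsilon|^{2\sigma}u^\varepsilon\bigr\rangle ds+\sum_{j}\!\int_0^t\!\!\int_B\bigl(|\Phi^\varepsilon(z,u^\varepsilon)|^2-|u^\varepsilon|^2\bigr)\diamond\cdots
\]
and the deterministic part vanishes while the Marcus piece vanishes by Step 1. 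This gives the almost-sure identity $\|u^\varepsilon(t)\|_{L^2}=\|u_0\|_{L^2}$ on the interval of existence, which then allows the local solution to be concatenated into a global one on $[0,T]$. Uniqueness follows from a Gronwall argument on the difference of two solutions in the Strichartz norm, again using the Lipschitz estimates of Step 1 and a cut-off on the $L^r$-norm.

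\textbf{Main obstacle.} The delicate point is justifying the Marcus-It\^o formula rigorously at the $L^2$-level: the Schr\"odinger generator is only skew-adjoint (no smoothing), the nonlinear term $|u|^{2\sigma}u$ is merely in $L^{r'}$, and the jump integrals are large-jump-free but the flow $\Phi^\varepsilon$ is nonlinear. One has to regularize (e.g.\ by Yosida approximations of the Laplacian, exactly the device invoked elsewhere in the paper), apply the change-of-variables formula on the regularized problem, and pass to the limit using the a priori $L^2$-bound together with the Strichartz bound from Step 2. Everything else is relatively standard given the Strichartz machinery and the cancellation structure built into the Marcus drift.
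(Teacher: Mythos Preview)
Your outline is correct and mirrors the strategy of \cite{BLZ}, which the paper simply invokes for this proposition without reproducing any argument. One small sharpening: the $L^p(0,T;L^r)$ bound on the stochastic convolution is not obtained by applying dual Strichartz pathwise after controlling the $L^2$ part, but rather directly via the \emph{stochastic} Strichartz estimate \eqref{sto-stri-est-1}, whose proof (combining a Burkholder--Davis--Gundy inequality in $L^r$ with the dispersive smoothing of $S_t$) is the principal technical input of \cite{BLZ}.
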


For $\psi\in W $, let
 $Y^\psi=\{Y^\psi(t),t\in[0,T]\}$ denote the solution (see Section \ref{sec-SE} for details) of the following deterministic nonlinear Schr\"odinger equation  (the so called skeleton equation)
\begin{align}
\begin{split}\label{eq rate LDP-new}
\mathrm{d}Y^\psi(t)&=\mathrm{i} \Delta Y^\psi(t) \mathrm{d}t  -\mathrm{i} \lambda|Y^\psi (t)|^{2 \sigma} Y^\psi(t) \mathrm{d} t -\int_{B}\sum_{j=1}^{m}\mathrm{i}  z_{j} g_{j}(Y^\psi(t))(\psi(t,z)-1)\nu(\dif z)\dif t,\\
Y^\psi(0)&=u_0.
\end{split}
\end{align}

Let us also recall the definition of a rate function and LDP. Let $\mathcal{E}$ be a Polish space with the Borel $\sigma$-field $\mathcal{B}(\mathcal{E}).$

\begin{definition}[Rate function] A function $I: \mathcal{E} \rightarrow[0, \infty]$ is called a rate function on $\mathcal{E},$ if for each $M<\infty,$ the level set $\{x \in \mathcal{E}: I(x) \leq M\}$ is a compact subset of $\mathcal{E}$.
\end{definition}

\begin{definition}[Large deviation principle] Let I be a rate function on $\mathcal{E}$. A family $\left\{\mathbb{X}^{\varepsilon}\right\}_{\varepsilon>0}$ of $\mathcal{E}$-valued random elements is said to satisfy a LDP on $\mathcal{E}$ with the rate function $I$ if the following two claims hold.
\begin{itemize}
  \item[(a)]
 (Upper bound) For each closed subset $C$ of $\mathcal{E},$
$$
\limsup_{\varepsilon \rightarrow 0} \varepsilon \log {P}\left(\mathbb{X}^{\varepsilon} \in C\right) \leq - \inf_{x \in C} I(x).
$$
 \item[(b)] (Lower bound) For each open subset $O$ of $\mathcal{E}$
$$
\liminf_{\varepsilon \rightarrow 0} \varepsilon \log {P}\left(\mathbb{X}^{\varepsilon} \in O\right) \geq - \inf_{x \in O} I(x).
$$
\end{itemize}
\end{definition}

We now state the main result in this paper.
\begin{theorem}\label{Th ex 1}
Under Assumption \ref{assm-g}, the solutions $\{u^\varepsilon,~\e>0\}$ to (\ref{eq X0-1}) satisfy a LDP on $D([0,T];L^2(\mathbb{R}^d)\cap L^p(0,T;L^r(\mathbb{R}^d))$ with the good
rate function $I$ given by
\begin{eqnarray}\label{rate ex LDP}
I(g):=\inf\{Q(\psi):\psi \in W,~Y^\psi=g\},~g\in D([0,T];L^2(\mathbb{R}^d))\cap L^p(0,T;L^r(\mathbb{R}^d)),
\end{eqnarray}
where $Y^\psi$ is the unique solution of (\ref{eq rate LDP-new}).
Here we use the convention that the infimum of an empty set is $\infty$.
\end{theorem}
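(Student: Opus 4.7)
The plan is to apply the weak convergence criterion for LDP with Poisson random measures developed in \cite{Budhiraja-Dupuis-Maroulas., Budhiraja-Chen-Dupuis} together with the refinement in \cite{LSZ}. This reduces Theorem \ref{Th ex 1} to verifying two sufficient conditions of the type announced in Section \ref{sec-proof-Th}: (i) for every $N\in\mN$ and every sequence $\{\psi_n\}\subset W^N$ converging to $\psi\in W^N$, the skeleton solutions satisfy $Y^{\psi_n}\to Y^\psi$ in $D([0,T];L^2(\mathbb{R}^d))\cap L^p(0,T;L^r(\mathbb{R}^d))$; and (ii) for every sequence $\{\psi^\varepsilon\}\subset \mathcal{W}^N$ with $\psi^\varepsilon\Rightarrow\psi$, the controlled solutions $\tilde u^\varepsilon$, obtained from \eqref{eq X0} by the Girsanov change replacing $\tilde N^{\varepsilon^{-1}}$ by $\tilde N^{\varepsilon^{-1}\psi^\varepsilon}$, converge in distribution to $Y^\psi$ in the same space. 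Goodness of the rate function then follows from compactness of $W^N$ together with (i).

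Before attacking (i) and (ii) I first make sense of the skeleton equation \eqref{eq rate LDP-new}. The twin structural obstacles are the absence of any smoothing effect of the Schr\"odinger group $(S_t)$ and the power-type nonlinearity $\lambda|Y|^{2\sigma}Y$. My plan is to regularize the noise coefficients by the Yosida approximation $J_\mu=\mu(\mu I-\Delta)^{-1}$, replacing each $g_j$ by $g_j\circ J_\mu$, gaining the $H^2$-regularity needed without spoiling the Lipschitz bounds of Assumption \ref{assm-g}. For fixed $\mu$, a Banach fixed point argument in $L^\infty(0,T;L^2(\mathbb{R}^d))\cap L^p(0,T;L^r(\mathbb{R}^d))$ based on Strichartz estimates gives a unique regularized skeleton solution $Y^\psi_\mu$. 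The Marcus structure of the forcing yields a $\mu$-independent $L^2$-conservation $\|Y^\psi_\mu(t)\|_{L^2}=\|u_0\|_{L^2}$ in the spirit of \cite{BLZ}, providing the global a priori bound needed to pass to the limit $\mu\to\infty$ and extract the unique global solution $Y^\psi$.

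For condition (i), I would use $Q(\psi_n)\leq N$ and the standard entropy-type inequalities for such controls to obtain uniform-in-$n$ Strichartz bounds on $Y^{\psi_n}$. For $\mu$ large I would bound $\|Y^{\psi_n}-Y^{\psi_n}_\mu\|$ by an error that is uniform in $n$ and vanishes as $\mu\to\infty$, reducing the problem to showing $Y^{\psi_n}_\mu\to Y^\psi_\mu$ for each fixed $\mu$. For the latter I would apply a time-discretization argument combined with a Gronwall bound on the $L^p_tL^r_x$-norm of the difference, passing the weak convergence of the intensities $\psi_n\nu\to\psi\nu$ through the nonlinear integrand $\mathrm{i}\sum_j z_j g_j(\cdot)$ by means of relative entropy estimates on the associated finite measures; letting $\mu\to\infty$ last yields $Y^{\psi_n}\to Y^\psi$.

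For condition (ii), the conservation $\|\tilde u^\varepsilon(t)\|_{L^2}=\|u_0\|_{L^2}$ afforded by the Marcus form, combined with Strichartz estimates localized by a cut-off in the power nonlinearity, supplies uniform-in-$\varepsilon$ bounds in $L^\infty(0,T;L^2)\cap L^p(0,T;L^r)$. I would then decompose the mild formulation of $\tilde u^\varepsilon$ and pass to the limit term by term: a Taylor expansion $\Phi^\varepsilon(z,x)-x=-\varepsilon\mathrm{i}\sum_j z_j g_j(x)+O(\varepsilon^2|z|^2)$ together with $\int_B|z|^2\nu(\mathrm{d}z)<\infty$ forces the pure martingale piece to vanish in $L^2$ and the Marcus correction $\varepsilon^{-1}\int_B[\Phi^\varepsilon-\mathrm{id}+\varepsilon\mathrm{i}\sum_j z_jg_j]\nu$ to tend to $0$, while the Girsanov drift $\int_B[\Phi^\varepsilon-\mathrm{id}](\psi^\varepsilon-1)\nu$ converges to $-\int_B\mathrm{i}\sum_j z_j g_j(Y^\psi)(\psi-1)\nu$ by $\psi^\varepsilon\Rightarrow\psi$ and the continuity from (i). A Gronwall closure on $\tilde u^\varepsilon-Y^\psi$ completes the verification. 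The main obstacle throughout is (i): without compact embeddings one cannot appeal to the Jakubowski--Skorohod tightness strategy used in, e.g., \cite{MP-21}, so the $W^N$-convergence must be pushed through the non-regularizing Schr\"odinger flow by direct comparison, which is precisely where the Yosida regularization, the time discretization, and the relative entropy estimates combine to close the argument.
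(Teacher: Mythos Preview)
Your overall framework is correct and matches the paper's: reduce the LDP to the two weak-convergence conditions via \cite{Budhiraja-Dupuis-Maroulas., Budhiraja-Chen-Dupuis, LSZ}, establish well-posedness and $L^2$-conservation of the skeleton via Yosida regularization and Strichartz estimates, and handle the power nonlinearity by short-time contraction plus iteration. However, there is a genuine gap in your plan for condition (i), and a meaningful difference in how you formulate condition (ii).

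\textbf{Condition (i): the uniform Yosida step does not close as stated.} You propose to show that $\|Y^{\psi_n}-Y^{\psi_n}_\mu\|_{E^{(T,p)}}$ vanishes as $\mu\to\infty$ \emph{uniformly in $n$}, and then reduce to $Y^{\psi_n}_\mu\to Y^\psi_\mu$ for fixed $\mu$. But the error in passing from $Y^{\psi_n}$ to $Y^{\psi_n}_\mu$ contains terms like $\|(J_\mu-I)f(Y^{\psi_n})\|_{L^{p'}_tL^{r'}_x}$ and $\int_0^T\!\!\int_B\|(J_\mu-I)h(Y^{\psi_n}(s),z)\|_{L^2}|\psi_n-1|\nu\,ds$, and $J_\mu\to I$ only pointwise on $L^2$ or $L^{r'}$, not uniformly on bounded sets. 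Since the family $\{Y^{\psi_n}\}_n$ is merely bounded in $E^{(T,p)}$ (no compactness is available on $\mR^d$), uniformity in $n$ is exactly what you cannot get directly---and assuming precompactness of $\{Y^{\psi_n}\}$ would be circular. The paper avoids this by a sharper decomposition of the difference: writing $Y^{\psi_n}-Y^{\psi_0}$ in mild form, the Lipschitz pieces $f(Y^{\psi_n})-f(Y^{\psi_0})$ and $(h(Y^{\psi_n},\cdot)-h(Y^{\psi_0},\cdot))(\psi_n-1)$ are absorbed by a short-time contraction/Gronwall, leaving only
\[
\Phi^n(t)-\Phi^0(t)=\mathrm{i}\int_0^t\!\!\int_B S_{t-s}\,h(Y^{\psi_0}(s),z)\big(\psi_n(s,z)-\psi_0(s,z)\big)\nu(dz)\,ds,
\]
which involves the \emph{fixed} function $Y^{\psi_0}$. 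The Yosida regularization is then applied only to this term (defining $\Phi^n_\mu$ with $h_\mu(Y^{\psi_0},\cdot)$), and now $\sup_n\|\Phi^n_\mu-\Phi^n\|_{E^{(T,p)}}\to0$ as $\mu\to\infty$ does hold, because the $\mu$-dependence sits on a single fixed function and the $n$-dependence is only through the controls, which are handled uniformly over $W^N$ by Lemma \ref{lemma-est-ass}. The convergence $\Phi^n_\mu\to\Phi^0_\mu$ for fixed $\mu$ then proceeds exactly as you outline (time discretization plus the relative-entropy lemma of \cite{BD 1998}), exploiting the $H^1$-regularity that $J_\mu$ provides. In short: regularize the isolated linear-in-$\psi$ term, not the whole solution.

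\textbf{Condition (ii): the paper uses the LSZ variant, not the original BCD condition.} You state (ii) as ``$\psi^\varepsilon\Rightarrow\psi$ implies $\tilde u^\varepsilon\Rightarrow Y^\psi$'' and plan a Gronwall on $\tilde u^\varepsilon-Y^\psi$. The paper instead verifies the condition from \cite[Theorem 4.4]{LSZ}: for \emph{any} $\{\psi_\varepsilon\}\subset\mathcal{W}^N$ (no convergence assumed), one shows $\|X^{\psi_\varepsilon}-Y^{\psi_\varepsilon}\|_{E^{(T,p)}}\to0$ in probability, comparing the controlled SDE with the skeleton driven by the \emph{same random} $\psi_\varepsilon$. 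This is strictly easier: the Girsanov drift term in the difference becomes $\int_B[h(X^{\psi_\varepsilon})-h(Y^{\psi_\varepsilon})](\psi_\varepsilon-1)\nu$, which is Lipschitz in $X^{\psi_\varepsilon}-Y^{\psi_\varepsilon}$ and feeds directly into Gronwall; one never needs to pass $\psi_\varepsilon\to\psi$ through the nonlinear integrand. Your formulation would in effect require first proving the LSZ-type estimate and then invoking (i) to convert $Y^{\psi_\varepsilon}\Rightarrow Y^\psi$, so the paper's route is the more economical one. The remaining ingredients you list---the Taylor expansion of $\Phi^\varepsilon$ to kill the martingale and correction terms (Lemma \ref{lem-G+H}), $L^2$-conservation, and a stopping-time/cut-off to get stochastic boundedness of $\|X^{\psi_\varepsilon}\|_{L^p_tL^r_x}$ (Lemma \ref{lem uniform})---match the paper.
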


\vskip 0.3cm

\section{Skeleton equation}\label{sec-SE}
In this section we will prove the existence and uniqueness of global mild solutions of the skeleton equation \eqref{eq rate LDP-new}.  First we define the notion of a mild solution used in the present work.

\begin{definition}Given any $u_0\in L^2(\mathbb{R}^d)$, a mild solution to  \eqref{eq rate LDP-new} is a function $Y^\psi=\{Y^\psi(t),t\in[0,T]\}\in C([0,T];L^2(\mathbb{R}^d))\cap L^{p}(0, T; L^{r}(\mathbb{R}^{d}))$  satisfying
\begin{align}\label{eq rate LDP 1}
Y^\psi(t)=&\, S_{t} u_{0}-\mathrm{i} \int_{0}^{t } S_{t -s}(\lambda|Y^\psi (s)|^{2 \sigma} Y^\psi(s)) \mathrm{d} s \nonumber\\
&-\int_0^t\!\!\int_{B}S_{t -s}\Big[\sum_{j=1}^{m}\mathrm{i}  z_{j} g_{j}(Y^\psi(s))(\psi(s,z)-1)\Big]\nu(\dif z)\dif s,\ \ t\in[0,T].
\end{align}

\end{definition}

Now we state the following technical lemma for later use.
\begin{lemma}\label{lem-con-convo-1}
Let $\phi\in  L^1(0,T; H^{1}(\mathbb{R}^d))$. Then
\begin{align*}
     v(t):=\int_0^t S_{t-s}\phi(s) \dif s, \quad t\in[0,T],
\end{align*}
belongs to $C([0,T];H^{1}(\mathbb{R}^d))$ and
\begin{eqnarray}
     &&\|v\|_{L^\infty(0,T;H^1)}\leq \int_0^T\|\phi(t)\|_{H^1}\dif t;\label{lem-con-eq-1}\\
     &&\|v(t)\|^2_{L^2(\mathbb{R}^d)}=2\int_0^t \big(\phi(s),v(s)\big)_{L^2(\mathbb{R}^d)} \dif s, \quad t\in[0,T].\label{lem-con-eq-2}
\end{eqnarray}
Moreover, for any $0\leq s\leq t\leq T$,
\begin{align}\label{lemna-eq-v2}
     \|v(t)-v(s)\|^2_{L^2(\mathbb{R}^d)}
   &=
     -2\mathrm{i}\int_s^t\Big(\nabla v(l),\nabla\big(v(l)-v(s)\big)\Big)_{L^2(\mathbb{R}^d)}\dif l
      +
     2\int_s^t \big(\phi(l),v(l)-v(s)\big)_{L^2(\mathbb{R}^d)} \dif l\nonumber\\
   &=
     2\mathrm{i}\int_s^t\Big(\nabla v(l),\nabla v(s)\Big)_{L^2(\mathbb{R}^d)}\dif l
     +
     2 \int_s^t \big(\phi(l),v(l)-v(s)\big)_{L^2(\mathbb{R}^d)} \dif l.
\end{align}

\end{lemma}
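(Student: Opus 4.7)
The plan is to establish all three conclusions by first reducing to smooth data through a density argument and then passing to the limit.

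For Step 1 (continuity and \eqref{lem-con-eq-1}): because the Schr\"odinger group $(S_t)$ commutes with $(I-\Delta)^{1/2}$ (both are Fourier multipliers) and is unitary on $L^2(\mathbb{R}^d)$, it is also an isometry on $H^1(\mathbb{R}^d)$. Thus $\|S_{t-s}\phi(s)\|_{H^1}=\|\phi(s)\|_{H^1}$, and Minkowski's integral inequality gives $\|v(t)\|_{H^1}\leq \int_0^t\|\phi(s)\|_{H^1}\dif s\leq \int_0^T\|\phi(s)\|_{H^1}\dif s$, which is \eqref{lem-con-eq-1}. The continuity $v\in C([0,T];H^1)$ is standard: for $t_n\to t$ write $v(t_n)-v(t)$ as a sum of two pieces that can be controlled by strong continuity of $(S_t)$ on $H^1$ and Lebesgue's dominated convergence theorem, using the integrability of $s\mapsto\|\phi(s)\|_{H^1}$.

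For Step 2 (approximation): pick a sequence $\phi_n\in C^1([0,T];H^2(\mathbb{R}^d))$ with $\phi_n\to \phi$ in $L^1(0,T;H^1)$, e.g.\ by a combination of temporal and spatial mollification, and define $v_n(t):=\int_0^t S_{t-s}\phi_n(s)\dif s$. By classical $C_0$-semigroup theory, $v_n\in C^1([0,T];L^2)\cap C([0,T];H^2)$ and $\partial_t v_n(t)=\mathrm{i}\Delta v_n(t)+\phi_n(t)$ in $L^2(\mathbb{R}^d)$. For Step 3 (energy identities for the smooth approximations): for $l\in[s,t]$, differentiate $\|v_n(l)-v_n(s)\|_{L^2}^2$ and use $\partial_l v_n=\mathrm{i}\Delta v_n+\phi_n$ together with integration by parts; the skew-adjointness of $\mathrm{i}\Delta$ causes the $\mathrm{i}\Delta v_n$ contribution to collapse into the pairing $-2\mathrm{i}(\nabla v_n(l),\nabla(v_n(l)-v_n(s)))_{L^2}$, while the $\phi_n$ term yields the second integral in \eqref{lemna-eq-v2}. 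Integrating in $l$ produces the first equality of \eqref{lemna-eq-v2} for $v_n$, and specializing to $s=0$ yields \eqref{lem-con-eq-2} for $v_n$. The second equality in \eqref{lemna-eq-v2} then follows by expanding $\nabla(v_n(l)-v_n(s))=\nabla v_n(l)-\nabla v_n(s)$; the extra piece $-2\mathrm{i}\int_s^t\|\nabla v_n(l)\|_{L^2}^2\dif l$ is purely imaginary and therefore does not affect the real-valued left-hand side.

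For Step 4 (passing to the limit): applying \eqref{lem-con-eq-1} to $\phi_n-\phi$ yields $\|v_n-v\|_{L^\infty_t H^1}\leq \|\phi_n-\phi\|_{L^1_t H^1}\to 0$. In particular $\nabla v_n\to \nabla v$ and $v_n\to v$ in $C([0,T];L^2)$, while $\phi_n\to \phi$ in $L^1(0,T;L^2)$. Using these together with Cauchy--Schwarz and the uniform bound $\sup_n\|v_n\|_{L^\infty_t H^1}<\infty$, one passes to the limit inside each bilinear integral on the right-hand side of \eqref{lem-con-eq-2} and \eqref{lemna-eq-v2}, completing the proof. The main bookkeeping subtlety is the one noted at the end of Step 3: one actually computes $\frac{\mathrm{d}}{\mathrm{d}l}\|v_n(l)-v_n(s)\|_{L^2}^2=2\operatorname{Re}(\partial_l v_n,v_n(l)-v_n(s))_{L^2}$ and must then reorganize this real expression into the complex form displayed in \eqref{lemna-eq-v2}, checking that all purely imaginary pieces introduced in the process cancel or drop out.
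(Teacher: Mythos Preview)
Your approach is correct and is essentially the same as the paper's: the paper invokes \cite[Proposition~4.1.6]{Caz+Har} to obtain the lemma for $\phi\in C([0,T];H^1(\mathbb{R}^d))$ and then passes to $L^1(0,T;H^1)$ by density, whereas you spell out the smooth case yourself (via $\phi_n\in C^1([0,T];H^2)$ and the semigroup equation $\partial_t v_n=\mathrm{i}\Delta v_n+\phi_n$) before performing the same limit. Your closing remark about reconciling the real quantity $\tfrac{\mathrm{d}}{\mathrm{d}l}\|v_n(l)-v_n(s)\|_{L^2}^2=2\operatorname{Re}(\partial_l v_n,v_n(l)-v_n(s))$ with the complex form displayed in \eqref{lemna-eq-v2} is exactly the bookkeeping that the paper's citation hides, and you handle it correctly.
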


\begin{proof}
 By \cite[Propositon 4.1.6]{Caz+Har}, if $\phi\in  C([0,T]; H^{1}(\mathbb{R}^d))$, then Lemma \ref{lem-con-convo-1} holds.
 Since $C([0,T]; H^{1}(\mathbb{R}^d))$ is dense in $L^1([0,T]; H^{1}(\mathbb{R}^d))$, by using a standard density arguments, Lemma \ref{lem-con-convo-1} holds in the general case.
\end{proof}

In what follows,  the letter $C$ stands for a positive constant which may depend on $A$ and whose exact value may change from line to line. We also use the notation $C_A$ if the dependence of the constant $C$ on $A$ should be highlighted.

Let $(p, r)$, $(\gamma, \rho)$ be admissible pairs {and let $\gamma',\rho'$ be conjugates of $\gamma,\rho$, respectively}. We recall the following deterministic Strichartz estimates from \cite{Caz} and the stochastic Strichartz estimates from \cite[Propositions 2.2 and 2.6]{BLZ}, which are crucial to the proof.
\begin{proposition}\label{prop-Stri-est}
\begin{enumerate}
\item[(1)] For every $\phi \in L^{2}\left(\mathbb{R}^{d}\right)$, the function $t \mapsto S_{t} \phi$ belongs to $L^{p}\left(\mathbb{R} ; L^{r}\left(\mathbb{R}^{d}\right)\right) \cap L^{\infty}\left(\mathbb{R} ; L^{2}\left(\mathbb{R}^{d}\right)\right)$ and there exists
a constant $C$ such that
\begin{align}\label{Stri-est-1}
\|S . \phi\|_{L^{p}\left(\mathbb{R} ; L^{r}\left(\mathbb{R}^{d}\right)\right)} \leq C\|\phi\|_{L^{2}\left(\mathbb{R}^{d}\right)}.
\end{align}
\item[(2)] Let $I$ be an interval of $\mathbb{R}$ and $J=\bar{I}$ with $0 \in J$. Then for every $f \in L^{\gamma^{\prime}}\left(I ; L^{\rho^{\prime}}\left(\mathbb{R}^{d}\right)\right)$, the function $t \mapsto \Phi_{f}(t)=\int_{0}^{t} S_{t-s} f(s) \mathrm{d} s$ belongs to $L^{p}\left(I ; L^{r}\left(\mathbb{R}^{d}\right)\right) \cap L^{\infty}\left(J ; L^{2}\left(\mathbb{R}^{d}\right)\right)$ and there exists a constant $C$
independent of $I$ such that
\begin{align}
&\left\|\Phi_{f}\right\|_{L^{\infty}\left(J ; L^{2}\left(\mathbb{R}^{d}\right)\right)} \leq C\|f\|_{L^{\gamma\prime}\left(I ; L^{\rho\prime}\left(\mathbb{R}^{d}\right)\right)} ; \label{Stri-est-2}\\
&\left\|\Phi_{f}\right\|_{L^{p}\left(I ; L^{r}\left(\mathbb{R}^{d}\right)\right)} \leq C\|f\|_{L^{\gamma \prime}\left(I ; L^{\rho{\prime}}\left(\mathbb{R}^{d}\right)\right) }.\label{Stri-est-3}
\end{align}
\item[(3)] Let $p,r\in[2,\infty)$. For all $q \geq 2$ and all $\mathbb{F}$-predictable process $\xi:[0, T] \times \Omega \times B \rightarrow L^{r}\left(\mathbb{R}^{d}\right)$ in
$L^{q}\left(\Omega ; L^{2}\left([0, T] \times B ; L^{2}\left(\mathbb{R}^{d}\right)\right) \cap L^{q}\left([0, T] \times B ; L^{2}\left(\mathbb{R}^{d}\right)\right)\right)$, we have
\begin{eqnarray}\label{sto-stri-est-1}
\mathbb{E}\left\|\int_{0}^{\cdot}\!\! \int_{B} S_{\cdot-s} \xi(s, z) \tilde{N}^1(\mathrm{d} s, \mathrm{d} z)\right\|_{L^{p}\left(0, T ; L^{r}\left(\mathbb{R}^{d}\right)\right)}^{q} \!\!\!\!\!\!\!\!
&\leq&\!\!\!\! C_q\;  \mathbb{E}\left(\int_{0}^{T}\!\!\!\! \int_{B}\!\|\xi(s, z)\|_{L^{2}\left(\mathbb{R}^{d}\right)}^{2} \nu(\mathrm{d} z) \mathrm{d} s\right)^{\frac{q}{2}} \nonumber\\
&&\!\!\!\!+ C_q\mathbb{E}\left(\int_{0}^{T}\!\!\!\! \int_{B}\!\|\xi(s, z)\|_{L^{2}\left(\mathbb{R}^{d}\right)}^{q} \nu(\mathrm{d} z) \mathrm{d} s\right).
\end{eqnarray}

\end{enumerate}
\end{proposition}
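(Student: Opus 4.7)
The plan is to treat the three estimates in turn, invoking classical deterministic Strichartz theory for parts (1) and (2), and its stochastic extension due to \cite{BLZ} for part (3); all three estimates appear in the cited literature, so the proof essentially reduces to pointing to the relevant statements and explaining how they fit together.

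For part (1), I would start from the dispersive bound $\|S_t\phi\|_{L^{\infty}(\mathbb{R}^d)} \leq C|t|^{-d/2}\|\phi\|_{L^{1}(\mathbb{R}^d)}$, obtained from the explicit Schr\"odinger kernel, interpolate with the $L^{2}$-isometry to get $\|S_t\phi\|_{L^{r}(\mathbb{R}^d)} \leq C|t|^{-d(1/2 - 1/r)}\|\phi\|_{L^{r'}(\mathbb{R}^d)}$, and then apply the $TT^{\ast}$ argument together with the Hardy--Littlewood--Sobolev inequality to conclude \eqref{Stri-est-1}. For part (2), the inhomogeneous bounds \eqref{Stri-est-2} and \eqref{Stri-est-3} follow from (1) by duality applied to the adjoint evolution, together with the Christ--Kiselev lemma whenever the two admissible pairs differ. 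Both parts are entirely classical and are recorded in \cite{Caz}.

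For part (3), I would invoke the stochastic Strichartz inequality of \cite[Propositions 2.2 and 2.6]{BLZ}. The essential ingredients are a Banach-space Burkholder--Davis--Gundy inequality for stochastic integrals against the compensated Poisson random measure $\tilde{N}^{1}$, which for $q \geq 2$ produces both a quadratic-variation-type term and a higher-moment jump-size term (accounting for the two summands on the right-hand side of \eqref{sto-stri-est-1}), together with the deterministic Strichartz estimate \eqref{Stri-est-3} applied pathwise and a Minkowski-type exchange between the mixed spacetime norm and the probability expectation.

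The main anticipated obstacle is part (3): the delicate point is the compatibility of the mixed spacetime norm $L^{p}(0,T; L^{r}(\mathbb{R}^d))$ with a Banach-space BDG inequality, since $L^{r}(\mathbb{R}^d)$ for $r \geq 2$ is only of martingale type $2$. The reference \cite{BLZ} handles this by a careful factorization argument; as their hypotheses exactly match the present statement, the proposal is simply to invoke their result without rederivation.
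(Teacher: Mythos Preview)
Your proposal is correct and matches the paper's treatment: the paper does not give a proof of this proposition at all, but simply records parts (1)--(2) as the deterministic Strichartz estimates from \cite{Caz} and part (3) as the stochastic Strichartz estimates from \cite[Propositions 2.2 and 2.6]{BLZ}. Your additional sketch of the underlying arguments (dispersive estimate, $TT^\ast$, duality/Christ--Kiselev, and BDG-type bounds for the Poisson case) is accurate background but goes beyond what the paper itself provides.
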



The subsequent arguments will repeatedly rely on the following crucial result.

\begin{lemma}\label{lemma-est-ass}
For every $N \in \mathbb{N}$, the following estimates hold
\begin{eqnarray}
&&\sup _{\hbar \in W^{N}}\int_0^T \int_{B}|z|^{2}(\hbar(t, z)+1) \nu(\dif z) \dif t<\infty, \label{eq lemma-est-ass 01}\\
&&\sup _{\hbar \in W^{N}}\int_0^T \int_{B}|z||\hbar (t, z)-1| \nu(\dif z) \dif t<\infty,\label{eq lemma-est-ass 02}\\
&&\lim_{\delta\rightarrow0}\sup _{\hbar \in W^{N}}\sup_{O\in\mathcal{B}([0,T]):{\rm Leb}_T(O)\leq \delta}\int_O \int_{B}|z||\hbar (t, z)-1| \nu(\dif z) \dif t=0,\label{eq lemma-est-ass 03}
\end{eqnarray}
\begin{eqnarray}\label{eq zhai 1}
\lim_{\delta\rightarrow0}\sup_{h\in W^N}\int_0^T\int_{0<|z|<\delta}|z||h(s,z)-1|\nu(dz)ds=0.
\end{eqnarray}
\end{lemma}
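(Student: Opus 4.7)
The plan is to reduce all four bounds to two ingredients. The first is a scaled Young-type inequality coming from the Legendre duality between $\ell(y)=y\log y -y+1$ and $\phi(x)=e^x-1$,
\[
xy \leq \frac{1}{\sigma}\ell(y) + \frac{1}{\sigma}\bigl(e^{\sigma x}-1\bigr),\qquad x,y\geq 0,\ \sigma>0.
\]
The second is the elementary bound $e^{\sigma x}-1\leq \sigma e^{\sigma} x$ for $x\in[0,1]$, which on $B$ translates into $e^{\sigma|z|^2}-1\leq \sigma e^{\sigma}|z|^2$. Together these allow us to trade the entropy budget $Q(\hbar)\leq N$ for $\nu$-moments of $|z|^2$.

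For \eqref{eq lemma-est-ass 01}, I apply the Young-type inequality with $x=|z|^2$ and $y=\hbar(s,z)$, and after integrating obtain
\[
\int_0^T\!\!\int_B |z|^2\hbar(s,z)\,\nu(\dif z)\dif s\leq \frac{N}{\sigma}+Te^{\sigma}\int_B|z|^2\nu(\dif z),
\]
uniformly in $\hbar\in W^N$; the ``$+1$'' part is $T\int_B|z|^2\nu(\dif z)<\infty$.

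For the remaining three bounds, the key new ingredient is the pointwise Csisz\'ar--Kullback-type inequality
\[
(b-1)^2\leq 2(b+1)\ell(b),\qquad b\geq 0,
\]
which I would establish by setting $h(b):=2(b+1)\ell(b)-(b-1)^2$, checking $h(1)=h'(1)=0$ and $h''(b)=4\log b+2/b$; since $2b\log b\geq -2/e>-1$, $h''>0$ on $(0,\infty)$, so $h$ is strictly convex with minimum value $0$ at $b=1$. Factoring $|z||\hbar-1|=\bigl[|z|(\hbar+1)^{1/2}\bigr]\cdot\bigl[(\hbar+1)^{-1/2}|\hbar-1|\bigr]$ and applying Cauchy--Schwarz gives
\[
\int_0^T\!\!\int_B |z||\hbar-1|\,\nu(\dif z)\dif s\leq \Bigl(\int_0^T\!\!\int_B |z|^2(\hbar+1)\,\nu(\dif z)\dif s\Bigr)^{1/2}\bigl(2Q(\hbar)\bigr)^{1/2},
\]
and both factors are uniformly controlled, the first by \eqref{eq lemma-est-ass 01} and the second by $\sqrt{2N}$, proving \eqref{eq lemma-est-ass 02}.

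For the two vanishing estimates \eqref{eq lemma-est-ass 03} and \eqref{eq zhai 1}, the same Cauchy--Schwarz step reduces matters to showing, uniformly in $\hbar\in W^N$, that $\int_O\!\!\int_B|z|^2(\hbar+1)\nu(\dif z)\dif s\to 0$ as $\mathrm{Leb}_T(O)\to 0$, respectively $\int_0^T\!\!\int_{|z|<\delta}|z|^2(\hbar+1)\nu(\dif z)\dif s\to 0$ as $\delta\to 0$. In each case the ``$+1$'' part vanishes (by absolute continuity of the Lebesgue integral, respectively by dominated convergence using $\int_B|z|^2\nu(\dif z)<\infty$), while for the $\hbar$ part I rerun the Young argument on the shrinking domain to get a bound of the form $N/\sigma+e^{\sigma}\cdot(\text{small})$: given $\varepsilon>0$, first pick $\sigma$ large so $N/\sigma<\varepsilon/2$, then pick the domain parameter small so the exponential term is $<\varepsilon/2$. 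The only substantive step in the whole proof is verifying the pointwise inequality $(b-1)^2\leq 2(b+1)\ell(b)$; beyond that, the argument is pure bookkeeping.
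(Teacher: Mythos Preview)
Your proof is correct. The paper itself does not give a self-contained argument: it simply observes that $e^{\sigma|z|^2}\leq e^\sigma$ and $e^{\sigma|z|}\leq e^\sigma$ on $B$, and then invokes \cite[Lemma~3.4]{Budhiraja-Chen-Dupuis}, \cite[Remark~2]{YZZ}, and \cite[Lemma~3.11]{Budhiraja-Chen-Dupuis} as black boxes. Those cited results are proved by the same Legendre-duality/Young inequality you use for \eqref{eq lemma-est-ass 01}, so at that level your approach and the paper's coincide.

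Where you genuinely diverge is in the treatment of $\int|z||\hbar-1|$. The standard arguments in the cited references split into the regimes $|\hbar-1|\leq\beta$ (where $|\hbar-1|^2\lesssim\ell(\hbar)$) and $|\hbar-1|>\beta$ (where $|\hbar-1|\lesssim\ell(\hbar)$), balancing the two. Your route through the global Pinsker-type inequality $(b-1)^2\leq 2(b+1)\ell(b)$ followed by a single Cauchy--Schwarz step is cleaner: it avoids the case split and reduces \eqref{eq lemma-est-ass 02}, \eqref{eq lemma-est-ass 03}, \eqref{eq zhai 1} uniformly to the already-established \eqref{eq lemma-est-ass 01} restricted to the relevant domain. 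The price is that you need the convexity computation for $h(b)=2(b+1)\ell(b)-(b-1)^2$, which you carried out correctly (your argument $2b\log b\geq -2/e>-1$ gives $h''(b)=4\log b+2/b>0$ after multiplying by $b/2$). Both approaches give the same quantitative dependence on $N$; yours is slightly more transparent and fully self-contained.
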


\begin{proof}
    Notice that $\int_B|z|^2\nu(\dif z)<\infty$, see \eqref{Assump on nu}, and for any $\delta>0$, $\mathcal{O}\in \mathcal{B}([0,T]\times B)$ with $\int_\mathcal{O}\nu(\dif z)\dif t<\infty$,
    $$
\int_\mathcal{O}e^{|z|^2}\nu(\dif z)\dif t\leq e\int_\mathcal{O}\nu(\dif z)\dif t<\infty,\ \ \ \ \int_\mathcal{O}e^{\delta|z|}\nu(\dif z)\dif t\leq e^\delta\int_\mathcal{O}\nu(\dif z)\dif t<\infty.
    $$
Applying \cite[Lemma 3.4]{Budhiraja-Chen-Dupuis} and \cite[Remark 2]{YZZ}, we yield \eqref{eq lemma-est-ass 01}, \eqref{eq lemma-est-ass 02} and \eqref{eq lemma-est-ass 03}. By the proof of \cite[Lemma 3.11]{Budhiraja-Chen-Dupuis} (cf. \cite[(3.23)]{Budhiraja-Chen-Dupuis}), we get \eqref{eq zhai 1}.

The proof of Lemma \ref{lemma-est-ass} is complete.
\end{proof}

\begin{remark}
 We remark that (\ref{eq lemma-est-ass 03}) is quoted from \cite[Remark 2]{YZZ}, which is a little more general than (3.5) of \cite[Lemma 3.4]{Budhiraja-Chen-Dupuis}, i.e.,
\begin{eqnarray}\label{Ineq SN}
\lim _{\delta \rightarrow 0} \sup _{\hbar \in W^{N}} \sup _{0\leq s\leq t\leq T: t-s \leq \delta} \int_s^t\int_B |z||\hbar(r, v)-1| \nu(\dif v) \dif r=0.
\end{eqnarray}
In this paper, (\ref{eq lemma-est-ass 03}) plays an important role to obtain the a priori estimates; see e.g. (\ref{eq LDP 5-05}).
\end{remark}

Given an admissible pair $(r, p)$ and $s$, $t$ such that $0 \leqslant s<t$, denote the space
$$
E^{(s, t, p)}:=L^{\infty}(s, t; L^2(\mathbb{R}^d)) \cap L^{p}(s, t; L^r(\mathbb{R}^d)).
$$
For the simplicity of notation, we write $E^{(T, p)}$ instead of $E^{(0, T, p)}$.
Before stating the main result in this subsection, we need the following result.
\vskip 0.3cm

\begin{proposition}\label{prop 5}
Let $r=2\sigma+2$ and $(p,r)$ be an admissible pair.  Under Assumption \ref{assm-g}, for any $\psi\in W$, there exists a unique mild solution $Y^\psi=\{Y^\psi(t),t\in[0,T]\}\in C([0,T];L^2(\mathbb{R}^d))\cap L^{p}(0, T; L^{r}(\mathbb{R}^{d}))$  to the skeleton equation \eqref{eq rate LDP-new}.
Moreover, $\|Y^{\psi}(t)\|_{L^2(\mathbb{R}^d)}=\|u_0\|_{L^2(\mathbb{R}^d)}$, for all $t\in[0,T]$, $\psi\in W$.

\end{proposition}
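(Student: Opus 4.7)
The plan is a contraction-mapping argument on the Strichartz space $E^{(0,T_0,p)}$ for local existence, followed by iteration using conservation of the $L^2$-norm to obtain a global solution.

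First I would fix $\psi \in W^N$ and set up the map
\[
\Gamma(Y)(t) := S_t u_0 - \mathrm{i}\lambda \int_0^t S_{t-s}\bigl(|Y(s)|^{2\sigma} Y(s)\bigr)\,\dif s - \int_0^t \int_B S_{t-s}\Bigl[\sum_{j=1}^m \mathrm{i} z_j g_j(Y(s))(\psi(s,z)-1)\Bigr]\nu(\dif z)\,\dif s,
\]
aiming to show $\Gamma$ is a contraction on the ball $B_R \subset E^{(0,T_0,p)}$ of radius $R = 2C\|u_0\|_{L^2}$ (with $C$ the Strichartz constant) for $T_0$ small enough. The linear propagator term is controlled by \eqref{Stri-est-1}. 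For the Schr\"odinger nonlinearity, using $r=2\sigma+2$ one computes $\||Y|^{2\sigma}Y\|_{L^{r'}} = \|Y\|_{L^r}^{2\sigma+1}$, and then applies H\"older in time with the subcriticality $\sigma < 2/d$, which translates to $p > 2\sigma+2$. This yields
\[
\||Y|^{2\sigma}Y\|_{L^{p'}(0,T_0;L^{r'})} \leq T_0^{\theta}\,\|Y\|_{L^p(0,T_0;L^r)}^{2\sigma+1}
\]
for some $\theta>0$, so by \eqref{Stri-est-2}--\eqref{Stri-est-3} this contribution vanishes with $T_0$. For the control term, since $g_j(y)=\tilde g_j(|y|^2)y$ and Assumption \ref{assm-g} gives $g_j(0)=0$ with Lipschitz constant $L_1$, we get $\|g_j(Y(s))\|_{L^2}\leq L_1\|Y(s)\|_{L^2}$, so Strichartz with the dual pair $(\gamma',\rho')=(1,2)$ yields
\[
\Bigl\|\int_0^{\cdot}\!\!\int_B S_{\cdot -s}\Bigl[\sum_j \mathrm{i} z_j g_j(Y)(\psi-1)\Bigr]\nu(\dif z)\,\dif s\Bigr\|_{E^{(T_0,p)}} \leq C\,mL_1\,\|Y\|_{L^\infty(0,T_0;L^2)}\!\!\int_0^{T_0}\!\!\!\int_B\!|z||\psi-1|\,\nu(\dif z)\,\dif s.
\]
By \eqref{eq lemma-est-ass 03}, the last factor tends to $0$ as $T_0\to 0$, \emph{uniformly} for $\psi\in W^N$.

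For contractivity, the analogous estimates on differences use the pointwise bound $\bigl||Y_1|^{2\sigma}Y_1-|Y_2|^{2\sigma}Y_2\bigr|\leq C(|Y_1|^{2\sigma}+|Y_2|^{2\sigma})|Y_1-Y_2|$, together with H\"older in $L^{r'}$ (with exponents $(\sigma+1)/\sigma$ and $r$), and the condition $|g_j(x)-g_j(y)|\leq L_1|x-y|$ from \eqref{cond-g-1}. Choosing $T_0$ small enough (depending only on $\|u_0\|_{L^2}$ and $N$) makes $\Gamma$ a $\tfrac{1}{2}$-contraction on $B_R$, producing a unique local mild solution $Y^\psi\in E^{(0,T_0,p)}$.

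For global existence I would establish the conservation law $\|Y^\psi(t)\|_{L^2}=\|u_0\|_{L^2}$, which then allows iterating the above local construction on $[T_0, 2T_0], [2T_0, 3T_0],\dots$ since $T_0$ depends only on $\|u_0\|_{L^2}$ and $N$. Formally, pairing the equation with $Y^\psi$ in $L^2$, the three terms $\mathrm{Re}\langle \mathrm{i}\Delta Y^\psi, Y^\psi\rangle$, $\mathrm{Re}\langle -\mathrm{i}\lambda|Y^\psi|^{2\sigma}Y^\psi, Y^\psi\rangle$, and $\mathrm{Re}\langle -\mathrm{i} z_j\tilde g_j(|Y^\psi|^2)Y^\psi(\psi-1), Y^\psi\rangle$ all vanish because each integrand is purely imaginary (using the saturation structure $g_j(y)=\tilde g_j(|y|^2)y$). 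The main obstacle is making this formal identity rigorous for a mild solution whose $H^1$-regularity is not guaranteed; I would justify it via Yosida regularization (as advertised in the introduction) — replacing $g_j$ by $g_j\circ J_\mu$ with $J_\mu = \mu(\mu I-\Delta)^{-1}$, applying Lemma \ref{lem-con-convo-1} (in particular \eqref{lem-con-eq-2}) to the regularized mild solution $Y^\psi_\mu$ to obtain the $L^2$-identity, and then passing $\mu\to\infty$ using Strichartz stability together with the Lipschitz bounds \eqref{cond-g-1}--\eqref{cond-g-2} and the uniform control from Lemma \ref{lemma-est-ass}. Uniqueness of the global solution follows by Gr\"onwall from the Strichartz-based Lipschitz estimates applied to two candidate solutions sharing the same initial datum.
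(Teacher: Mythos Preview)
Your strategy matches the paper's: Strichartz contraction for local existence, Yosida regularization for the conservation law, and iteration for globalization. The one genuine gap is in your description of the regularization step. Replacing $g_j$ by $g_j\circ J_\mu$ is not enough: $g_j(J_\mu Y)$ lies in $L^2$ but need not lie in $H^1$, and you have left the power nonlinearity $|Y|^{2\sigma}Y$ unregularized, so the forcing of your approximate equation is not in $L^1(0,T;H^1)$ and Lemma~\ref{lem-con-convo-1} does not apply to produce an $H^1$-valued solution.

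The paper's fix is to apply an \emph{outer} $J_\mu$ as well, setting $f_\mu(w):=J_\mu f(J_\mu w)$ and $h_\mu(w,z):=J_\mu h(J_\mu w,z)$ (and regularizing the datum to $J_\mu u_0$). Since $J_\mu\in\mathcal L(H^{-1},H^1)$ and $f(J_\mu w)\in L^{r'}\hookrightarrow H^{-1}$, both regularized nonlinearities map into $H^1$; Lemma~\ref{lem-con-convo-1} then gives $Y^\psi_\mu\in C([0,T_0];H^1)$, after which the $H^{-1}$--$H^1$ duality pairing of the equation with $Y^\psi_\mu$ (rather than identity~\eqref{lem-con-eq-2} directly, which concerns only the convolution part and omits the $S_t u_0$ contribution) yields $\tfrac{d}{dt}\|Y^\psi_\mu(t)\|_{L^2}^2=0$. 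The passage $\mu\to\infty$ then proceeds exactly as you outline, via Strichartz stability and the bounds from Lemma~\ref{lemma-est-ass}.
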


\begin{proof}
Without loss of generality, fix $N\in\mathbb{N}$ and let $\psi\in W^N$. We will divide the proof into four steps.

\textbf{Step 1 (Local existence of the solution)} For the existence part, we use a fixed point argument.
Set
\begin{align}
\Lambda_{{\widetilde{T}},M}=\left\{u \in E^{({\widetilde{T}}, p)} :\|u\|_{L^\infty(0, {\widetilde{T}}; L^{2}(\mathbb{R}^d))}+\|u\|_{L^{p}(0, {\widetilde{T}}; L^{r}(\mathbb{R}^d))} \leq M\right\},
\end{align}
where ${\widetilde{T}}\in(0,T]$ and $M>0$ will
be determined later.

For $Y\in \Lambda_{{\widetilde{T}},M}$, let $\Upsilon$ denote the mapping such that
\begin{align}\label{eq Up 01}
\Upsilon(Y,u_0,\psi)(t)=& S_{t} u_{0}-\mathrm{i} \int_{0}^{t } S_{t -s}(\lambda|Y (s)|^{2 \sigma} Y (s)) \mathrm{d} s \nonumber\\
&-{\rm{i}} \int_0^t\!\!\int_{B}S_{t -s}\Big[ \sum_{j=1}^{m} z_{j} g_{j}(Y(s) )(\psi(s,z)-1)\Big]\nu(\dif z)\dif s\nonumber\\
=& S_{t} u_{0}+I_1(Y )(t) +I_2(Y )(t),\ \ t\in[0,{\widetilde{T}}].
\end{align}
Let $p',r'$ be the conjugates of $p,r$. Recall that $E^{(\tilde{T}, p)}:=L^{\infty}(0, \tilde{T}; L^2(\mathbb{R}^d)) \cap L^{p}(0, \tilde{T}; L^r(\mathbb{R}^d))$. 
By applying Strichartz inequalities \eqref{Stri-est-2} and \eqref{Stri-est-3} with the admissible pair $(\gamma,\rho)=(p,r)$ and the H\"older inequality, we get
\begin{align}\label{eq Up 02}
\|I_1(Y )\|_{E^{({\widetilde{T}},p)}}&\leq C\|  \lambda|Y  |^{2 \sigma} Y   \|_{L^{p\prime}(0, {\widetilde{T}} ; L^{r^{\prime}}(\mathbb{R}^{d}))}\nonumber\\
&=C|\lambda|\left(\int_{0}^{{\widetilde{T}}}\left(\int_{\mathbb{R}^{d}}|Y (t, x)|^{r} \mathrm{~d} x\right)^{\frac{p^{\prime}}{r^{\prime}}} \mathrm{d} t\right)^{\frac{1}{p^{\prime}}} \nonumber\\
&\leq C|\lambda| {\widetilde{T}}^{\frac{p-r}{p-1} \frac{1}{p^{\prime}}}\left(\int_{0}^{{\widetilde{T}}}\left(\int_{\mathbb{R}^{d}}|Y (t, x)|^{r} \mathrm{~d} x\right)^{\frac{p}{r}} \mathrm{~d} t\right)^{\frac{1}{p} \cdot \frac{r}{r^{\prime}}} \nonumber\\
&= C|\lambda| {\widetilde{T}}^{1-\frac{d \sigma}{2}}\|Y \|_{L^{p}(0, {\widetilde{T}}; L^{r}(\mathbb{R}^{d}))}^{2 \sigma+1},
\end{align}
where we also used the fact that $r'(2\sigma+1)=r$ and $\frac2p=\frac{d}2-\frac{d}{r}$. 

Let $Y_1, Y_2 \in \Lambda_{{\widetilde{T}},M}$. 
It is easy to show that the map $F:L^{r}(\mathbb{R}^d)\ni u\rightarrow |u|^{2\sigma}u\in L^{r'}(\mathbb{R}^d)$
is continuously Fr\'echet differential with 
\begin{align*}
    F'(u)v=|u|^{2\sigma}v+2\sigma |u|^{2\sigma-2}u\mathrm{Re}(u\overline{v}),\quad u,v\in L^{r}(\mathbb{R}^d),
\end{align*}
and Lipschitz continuous on bounded sets
 \begin{align}\label{eq-est-nonlinear-1}
 \big{\|}\ |u_1|^{2\sigma}u_1 - |u_2|^{2\sigma}u_2\big{\|}_{L^{r'}(\mathbb{R}^d)}\leq C(\|u_1\|_{u^r(\mathbb{R}^d)}+\|u_2\|_{L^r(\mathbb{R}^d)})^{2\sigma}\|u_1-u_2\|_{L^r(\mathbb{R}^d)},\quad u_1,u_2\in L^r(\mathbb{R}^d).
 \end{align}
By using again Strichartz's estimates \eqref{Stri-est-2} and \eqref{Stri-est-3} with $(\gamma,\rho)=(p,r)$,  inequality \eqref{eq-est-nonlinear-1}, and the H\"older inequality, we obtain
\begin{align}\label{eq Up 03}
&\|I_1(Y _1)-I_1(Y _2)\|_{E^{({\widetilde{T}},p)}}\nonumber\\
&\leq C |  \lambda|\||Y _1 |^{2 \sigma} Y _1 - |Y _2 |^{2 \sigma} Y _2\|_{L^{p\prime}(0, {\widetilde{T}}; L^{r^{\prime}}(\mathbb{R}^{d}))}\nonumber\\
&\leq C |  \lambda|\big|(\|Y _1 \|_{L^r(\mathbb{R}^d)} +\|Y _2\|_{L^r(\mathbb{R}^d)})^{2 \sigma} \|Y _1-Y _2\|_{L^r(\mathbb{R}^d)}\big|_{L^{p\prime}(0, {\widetilde{T}})}\nonumber\\
&\leq C|  \lambda| {\widetilde{T}}^{1-\frac{d\sigma}2}(\|Y _{1}\|_{L^{p}(0, {\widetilde{T}}; L^{r}(\mathbb{R}^{d}))}+\|Y _{2}\|_{L^{p}(0, {\widetilde{T}}; L^{r}(\mathbb{R}^{d}))})^{2 \sigma}\|Y _{1}-Y _{2}\|_{L^{p}(0, {\widetilde{T}}; L^{r}(\mathbb{R}^{d}))}\nonumber\\
&\leq C|  \lambda| {\widetilde{T}}^{1-\frac{d\sigma}2}(2M)^{2\sigma}\|Y _{1}-Y _{2}\|_{L^{p}(0, {\widetilde{T}}; L^{r}(\mathbb{R}^{d}))}.
\end{align}
By Strichartz's estimate (\ref{Stri-est-1}), the Cauchy-Schwartz inequality, and Condition \eqref{cond-g-1}, we get
\begin{align}\label{eq Up 04}
\sup _{0 \leq t \leq {\widetilde{T}}}\big\|I_2(Y )(t)\big\|_{L^{2}\left(\mathbb{R}^{d}\right)} &=\sup _{t \in[0, {\widetilde{T}}]}\Big\| \int_{0}^{t} \int_{B}S_{t -s}\Big[{\rm{i}}\sum_{j=1}^{m}    z_{j} g_{j}(Y(s) )(\psi(s,z)-1)\Big] \nu(\mathrm{d} z) \mathrm{d} s\Big\|_{L^{2}\left(\mathbb{R}^{d}\right)} \nonumber\\
&\leq  C \int_0^{\widetilde{T}}\int_B \sum_{j=1}^{m}|z_j||\psi(s,z)-1|\| g_{j}(Y(s) ) \|_{L^2(\mathbb{R}^d)}\nu(\mathrm{d} z) \mathrm{d} s\nonumber\\
&\leq  CL_1m^{\frac12} \int_0^{\widetilde{T}}\int_B|z||\psi(s,z)-1|\|Y(s) \|_{L^2(\mathbb{R}^d)}\nu(\mathrm{d} z) \mathrm{d} s\nonumber\\
& \leq CL_1m^{\frac12}\|Y \|_{ E^{({\widetilde{T}},p)}}\sup _{\hbar \in W^{N}}\int_0^{\widetilde{T}} \int_{B}|z||\hbar (s, z)-1|\nu(\dif z) \dif s.
\end{align}
Similarly, using Strichartz's estimates (\ref{Stri-est-3}) with $(\gamma,\rho)=(p,r)$, the Cauchy-Schwartz inequality, and Condition \eqref{cond-g-1} gives
\begin{align}\label{eq Up 05}
\|I_2(Y )\|_{L^{p}(0, {\widetilde{T}} ; L^{r}(\mathbb{R}^{d}))}
&=\Big\|\int_{0}^{\cdot} S_{\cdot-s}\Big(\int_{B} \Big[\sum_{j=1}^{m} {\rm{i}} z_{j} g_{j}(Y(s) )(\psi(s,z)-1)\Big] \nu(\mathrm{d} z)\Big) \mathrm{d} s\Big\|_{L^{p}(0, {\widetilde{T}} ; L^{r}(\mathbb{R}^{d}))} \nonumber\\
& \leq C \int_{0}^{{\widetilde{T}}}\Big\|\int_{B} \Big[ \sum_{j=1}^{m} {\rm{i}} z_{j} g_{j}(Y(s) )(\psi(s,z)-1)\Big] \nu(\mathrm{d} z)\Big\|_{L^{2}(\mathbb{R}^{d})} \mathrm{d} s \nonumber\\
&\leq CL_1m^{\frac12}\|Y \|_{ E^{({\widetilde{T}},p)}}\sup _{\hbar \in W^{N}}\int_0^{\widetilde{T}} \int_{B}|z||\hbar (s, z)-1|\nu(\dif z) \dif s.
\end{align}
Using arguments similar to those that led up to \eqref{eq Up 04} and \eqref{eq Up 05}, we deduce that
\begin{align}\label{eq Up 06}
&\sup _{0 \leq t \leq {\widetilde{T}}}\big\|I_2(Y_1 )(t)-I_2(Y _2)(t)\big\|_{L^{2}(\mathbb{R}^{d})} \nonumber\\
&\leq  C \int_0^{\widetilde{T}}\int_B \sum_{j=1}^m|z_j||\psi(s,z)-1|\| g_{j}(Y _1(s))- g_{j}(Y _2(s))\|_{L^2(\mathbb{R}^d)}\nu(\mathrm{d} z) \mathrm{d} s\nonumber\\
& \leq CL_1m^{\frac12}\|Y _1-Y _2\|_{ E^{({\widetilde{T}},p)}}\sup _{\hbar \in W^{N}}\int_0^{\widetilde{T}} \int_{B}|z||\hbar (s, z)-1|\nu(\dif z) \dif s,
\end{align}
and
\begin{align}\label{eq Up 07}
&\|I_2(Y _1)-I_2(Y _2)\|_{L^{p}(0, {\widetilde{T}} ; L^{r}(\mathbb{R}^{d}))}\nonumber\\
 & \leq C \int_{0}^{{\widetilde{T}}}\Big\|\int_{B} \Big[ \sum_{j=1}^{m} {\rm{i}} z_{j}( g_{j}(Y _1(s))-g_j(Y _2(s)))(\psi(s,z)-1)\Big] \nu(\mathrm{d} z)\Big\|_{L^{2}(\mathbb{R}^{d})} \mathrm{d} s \nonumber\\
&\leq CL_1m^{\frac12}\|Y _1-Y _2\|_{ E^{({\widetilde{T}},p)}}\sup _{\hbar \in W^{N}}\int_0^{\widetilde{T}} \int_{B}|z||\hbar (s, z)-1|\nu(\dif z) \dif s.
\end{align}
Combining (\ref{eq Up 01}), (\ref{eq Up 02}), and (\ref{eq Up 03})--(\ref{eq Up 07}) gives
\begin{align}
\|\Upsilon(Y ,u_0,\psi)\|_{E^{({\widetilde{T}},p)}}\leq &C \|u_0\|_{L^2(\mathbb{R}^d)}+C|\lambda| {\widetilde{T}}^{1-\frac{d \sigma}{2}}\|Y \|_{E^{({\widetilde{T}},p)}}^{2 \sigma+1}\label{prop-proof-gamma-eq}\\
&+ 2CL_1m^{\frac12}\|Y \|_{ E^{({\widetilde{T}},p)}}\sup _{\hbar \in W^{N}}\int_0^{\widetilde{T}} \int_{B}|z||\hbar (s, z)-1|\nu(\dif z) \dif s,\nonumber
\end{align}
and
\begin{align}\label{prop-proof-gamma-eq 02}
&\|\Upsilon(Y _1,u_0,\psi)-\Upsilon(Y _2,u_0,\psi)\|_{E^{({\widetilde{T}},p)}}\nonumber\\
&\leq  C|  \lambda| {\widetilde{T}}^{1-\frac{d\sigma}2}(2M)^{2\sigma}\|Y _{1}-Y _{2}\|_{E^{({\widetilde{T}},p)}}\\
&\hspace{0.5cm}+2CL_1m^{\frac12}\|Y _1-Y _2\|_{ E^{({\widetilde{T}},p)}}\sup _{\hbar \in W^{N}}\int_0^{\widetilde{T}} \int_{B}|z||\hbar (s, z)-1|\nu(\dif z) \dif s.\nonumber
\end{align}

In the following of the proof, set $M=2 C \|u_0\|_{L^2(\mathbb{R}^d)}$ and let $\varepsilon =\min\{\frac{M}{4},\frac14\}$.
By Lemma \ref{lemma-est-ass}, there exists  $\delta_1>0$ such that
$$
\sup _{\hbar \in W^{N}} \sup _{0\leq s\leq t\leq T: t-s \leq \delta_1} \int_s^t\int_B |z||\hbar(r, v)-1| \nu(\dif v) \dif r<\frac1{2CL_1m^{\frac12}}\varepsilon.
$$
Now let us  choose ${\widetilde{T}}=T_0$, here
\begin{align}\label{T0-req-cond}
T_0:=\frac{1}{2}\min\Big\{1,\Big(\frac{1}{4(2M)^{2\sigma}C|\lambda|}\Big)^{1/(1-d\sigma/2)},\delta_1,T\Big\},
\end{align}
which implies that
\begin{align*}\label{T0-req-cond-01}
&C|\lambda|T_0^{1-\frac{d\sigma}2}M^{2\sigma+1}-\frac{M}4<0,\\
&C|  \lambda| T_0^{1-\frac{d\sigma}2}(2M)^{2\sigma}<\frac14, \\
&2CL_1m^{\frac12}\sup _{\hbar \in W^{N}}\int_0^{T_0} \int_{B}|z||\hbar (s, z)-1|\nu(\dif z) \dif s<\frac14.
\end{align*}
Then by \eqref{prop-proof-gamma-eq} we get, for all $Y\in \Lambda_{{T_0},M}$,
\begin{align}
\|\Upsilon(Y,u_0,\psi)\|_{E^{(T_0,p)}}\leq M.
\end{align}
Hence $\Upsilon(Y,u_0,\psi)\in \Lambda_{{T_0},M}$ for all $Y\in \Lambda_{{T_0},M}$. Furthermore, by (\ref{prop-proof-gamma-eq 02}), the mapping $\Upsilon(\cdot,u_0,\psi)$ is an $\frac{1}{2}$-contraction on $\Lambda_{{T_0},M}$, 
 that is, for any $Y_1,Y_2\in \Lambda_{{T_0},M}$,
 \begin{align*}
&\|\Upsilon(Y _1,u_0,\psi)-\Upsilon(Y _2,u_0,\psi)\|_{E^{(T_0,p)}}
\leq  
\frac{1}{2}\|Y _{1}-Y _{2}\|_{E^{(T_0,p)}}.
\end{align*}

The length of $T_0$ is determined only by $N$, $\sigma$, $M$, $\|u_0\|_{L^2(\mathbb{R}^d)}$, and $\lambda$. Thus $\Upsilon(\cdot,u_0,\psi)$ has a unique fixed point $Y_1^\psi\in \Lambda_{{T_0},M}$, denoted also by $\Gamma^{0}(u_{0}, \psi)$. And it is not difficult to see that $Y_1^\psi\in C([0,T_0];L^2(\mathbb{R}^d))$ and $Y_1^\psi$ is a solution to \eqref{eq rate LDP 1} on $[0,T_0]$.

\vskip 0.2cm
\textbf{Step 2 (Uniqueness of the solution)}
Let $Y_1,Y_2\in E^{(T,p)}\cap C([0,T];L^2(\mathbb{R}^d))$ be any two solutions satisfying \eqref{eq rate LDP 1} with the same initial datum $u_0$.  We define
$$t_{1}=\sup \left\{t \in[0, T] : Y_1(s)=Y_2(s)\text{ for all }s\in[0,t]\right\} .$$ If $t_{1}=T$, then the uniqueness follows. If $t_{1}<T$,  then we see that $\hat{Y}_{1}(\cdot)=Y_1(t_1+\cdot)$ and $\hat{Y}_{1}(\cdot)=Y_2(t_1+\cdot)$ are two solutions of \eqref{eq rate LDP 1} with $u_0$ replaced by $Y_1(t_1)=Y_2(t_1)$ on the interval $[0, T-t_1]$. Using arguments similar to proving (\ref{prop-proof-gamma-eq 02}), we have
\begin{align}
\begin{split}\label{prop-proof-contra-eq}
 \|\hat{Y}_1  &-\hat{Y}_2\|_{E^{(0,\tau,p)}}\\
   \leq  &\Big[C|  \lambda| \tau^{1-\frac{n\sigma}2}(\|\hat{Y}_{1}\|_{L^{p}([0, \tau]; L^{r}(\mathbb{R}^{d}))}+\|\hat{Y}_{2}\|_{L^{p}([0, \tau]; L^{r}(\mathbb{R}^{d}))})^{2 \sigma}\\
   &+2CL_1m^{\frac12}\sup _{\hbar \in W^{N}}\int_0^\tau \int_{B}|z||\hbar (t_1+s, z)-1|\nu(\dif z) \dif s\Big]\|\hat{Y}_{1}-\hat{Y}_{2}\|_{E^{(0,\tau,p)}}.
  \end{split}
\end{align}
Since (\ref{Ineq SN}) and $Y_1,Y_2\in E^{(T,p)}\cap C([0,T];L^2(\mathbb{R}^d))$, we can choose $\tau$ small enough such that  the coefficient of $\|\hat{Y}_{1}-\hat{Y}_{2}\|_{E^{(0,\tau,p)}}$ on the right hand side of \eqref{prop-proof-contra-eq} is smaller than $\frac12$, then
\begin{align*}
   \|\hat{Y}_1-\hat{Y}_2\|_{E^{(0,\tau,p)}}\leq\frac12\|\hat{Y}_{1}-\hat{Y}_{2}\|_{E^{(0,\tau,p)}}.
\end{align*}
This implies that $Y_1=Y_2$ on $[0,t_1+\tau]$ which contradicts the definition of $t_1$.
\vskip 0.2cm
\textbf{Step 3 (Conservation law)}
We first introduce the Yosida approximation operator and its properties. For the details, we refer the reader to  \cite[Section 1.5]{Caz}.

Fix $\mu>0$ and define an operator $J_{\mu}:H^{-1}(\mathbb{R}^d)\rightarrow H^{1}(\mathbb{R}^d)$ by $J_{\mu}=\mu(\mu I- \Delta)^{-1}$. Indeed for any $s\in\mathbb{R}$,  $J_{\mu}$ is a contraction of $H^s(\mathbb{R}^d)$ and $J_{\mu}\in \mathcal{L}(H^{s}(\mathbb{R}^{d}), H^{s+2}(\mathbb{R}^{d}))$.
Moreover, if $\mathbb{X}$ is either of the spaces $H^1(\mathbb{R}^d)$,  $H^{-1}(\mathbb{R}^d)$ or $L^p(\mathbb{R}^d)$ for $p\in(1,\infty)$, then
\begin{eqnarray}
&& \|J_{\mu}g\|_\mathbb{X}\leq \|g\|_\mathbb{X},\ \forall g\in\mathbb{X};\label{J mu 01}\\
&& _{\mathbb{X}}\langle J_{\mu}g,   h\rangle_{\mathbb{X}^*}=_{\mathbb{X}}\langle g,   J_{\mu}h\rangle_{\mathbb{X}^*},
       \ \forall g\in\mathbb{X}, h\in \mathbb{X}^*;\label{J mu 02}\\
&& \lim_{\mu\rightarrow\infty}\|J_{\mu}g-g\|_{\mathbb{X}}=0,\ \forall g\in\mathbb{X}.\label{J mu 03}
\end{eqnarray}
Here $\mathbb{X}^*$ is the dual space of $\mathbb{X}$. And
\begin{eqnarray}
\|J_{\mu}g\|_{L^1(\mathbb{R}^d)}\leq \|g\|_{L^1(\mathbb{R}^d)},\ \forall g\in L^1(\mathbb{R}^d).\label{J mu 04}
\end{eqnarray}

We also define the nonlinearity by
\begin{align*}
   &f_{\mu}(w):=J_{\mu}f(J_{\mu}w),\quad w\in L^2(\mathbb{R}^d),\\
   &h_{\mu}(w,z):=J_{\mu}h(J_{\mu}w,z)=\sum_{j=1}^mz_jJ_{\mu}g_j(J_{\mu}w),\quad w\in L^2(\mathbb{R}^d), z\in B,
\end{align*}
where $f(w)=\lambda  |w|^{2\sigma}w$ and  $h(w,z)=\sum_{j=1}^mz_jg_j(w).$
Observe that $J_{\mu}w\in H^1(\mathbb{R}^d)\subset L^2(\mathbb{R}^d)\cap L^r(\mathbb{R}^d)$. So $f(J_{\mu}w)\in L^{r'}(\mathbb{R}^d)$. Since $L^{r^{\prime}}(\mathbb{R}^d)  \hookrightarrow H^{-1}(\mathbb{R}^{d})$, we infer $f_{\mu}(w) \in H^{1}(\mathbb{R}^{d})$. Also we have $h_{\mu}(w,z) \in H^{1}(\mathbb{R}^{d})$.

Recall $T_0$ introduced in (\ref{T0-req-cond}). Applying (\ref{J mu 01}) and using arguments similar to that in the proofs of Step 1 and Step 2, there exists a unique solution
$Y^1_{\mu}\in \Lambda_{{T_0},M}$ of the following equation on $[0,T_0]$.
\begin{align}
\begin{split}\label{prop-proof-yosida-eq1}
&\mathrm{d} Y_{\mu}(t)={\rm{i}} \Delta Y_{\mu}(t)-{\rm{i}} f_{\mu}(Y_{\mu}(t)) \mathrm{d} t-\int_{B} {\rm{i}}h_{\mu}(Y_{\mu}(t),z)(\psi(t,z)-1) \nu(\mathrm{d} z) \mathrm{d} t, t\in[0,T], \\
&Y_{\mu}(0)= J_{\mu}u_0.
\end{split}
\end{align}
Indeed, for $Y\in \Lambda_{{\widetilde{T}},M}$, let $\Upsilon_\mu$ denote the mapping such that
\begin{align*}
\Upsilon_{\mu}(Y,u_0,\psi)(t)=& S_{t} J_{\mu}u_{0}-\mathrm{i} \int_{0}^{t } S_{t -s} f_{\mu}(Y(s)) \mathrm{d} s \nonumber\\
&-{\rm{i}} \int_0^t\!\!\int_{B}S_{t -s}h_{\mu}(Y(s),z)(\psi(s,z)-1)\nu(\dif z)\dif s,
\end{align*}
(\ref{prop-proof-gamma-eq}) and (\ref{prop-proof-gamma-eq 02}) hold with $\Upsilon$ replaced by $\Upsilon_{\mu}$.

Applying the properties of $J_\mu$ listed as above, it follows from
  Lemma \ref{lem-con-convo-1} and (\ref{eq lemma-est-ass 02}) that $Y^1_{\mu}\in C([0,T_0];H^1(\mathbb{R}^d))$. Taking the $H^{-1}-H^1$ duality product of equation \eqref{prop-proof-yosida-eq1} by $Y^1_{\mu}$ shows
\begin{align*}
   \frac12\frac{{\rm{d}}}{\rm{d }t}\|Y^1_{\mu}(t)\|_{L^2(\mathbb{R}^d)}^2
   &=\,_{H^{-1}}\langle{\rm{i}} \Delta Y^1_{\mu}(t),   Y^1_{\mu}(t)\rangle_{ H^{1}} - \,_{H^{-1}}\langle{\rm{i}} f_{\mu}\left(Y^1_{\mu}(t)\right),   Y^1_{\mu}(t)\rangle_{H^{1}}\\
    &\hspace{0.5cm}
    - \int_B\,_{H^{-1}}\langle {\rm{i}} h_{\mu}\left(Y^1_{\mu}(t),z\right),   Y^1_{\mu}(t)\rangle_{H^{1}}(\psi(t,z)-1)\nu(\mathrm{d}z)\\
   &=0.
\end{align*}
Hence
\begin{align*}
\|Y^1_{\mu}(t)\|_{L^2(\mathbb{R}^d)}^2=\|Y_{\mu}(0)\|_{L^2(\mathbb{R}^d)}^2=\|J_{\mu}u_0\|_{L^2(\mathbb{R}^d)}^2,\ t\in[0,T_0].
\end{align*}
Now consider the following equation
\begin{align}
\begin{split}\label{prop-proof-yosida-eq2}
&\mathrm{d} Y^2_{\mu}(t)={\rm{i}} \Delta Y^2_{\mu}(t)-{\rm{i}} f_{\mu}(Y^2_{\mu}(t)) \mathrm{d} t-\int_{B} {\rm{i}}h_{\mu}(Y^2_{\mu}(t),z)(\psi(T_0+t,z)-1) \nu(\mathrm{d} z) \mathrm{d} t, t\in[0,T_0], \\
&Y^2_{\mu}(0)=Y^1_{\mu}(T_0).
\end{split}
\end{align}
Using the same arguments as above, there exists a unique solution
$Y^2_{\mu}\in \Lambda_{{T_0},M}\cap C([0,T_0];H^1(\mathbb{R}^d))$ to (\ref{prop-proof-yosida-eq2}) and
\begin{align*}
\|Y^2_{\mu}(t)\|_{L^2(\mathbb{R}^d)}^2=\|Y^2_{\mu}(0)\|_{L^2(\mathbb{R}^d)}^2=\|Y^1_{\mu}(T_0)\|_{L^2(\mathbb{R}^d)}^2=\|J_{\mu}u_0\|_{L^2(\mathbb{R}^d)}^2,\ t\in[0,T_0].
\end{align*}
Following a recursive procedure we are able to prove that, for any $k=2,3,...$, there exists a unique solution
$Y^k_{\mu}\in \Lambda_{{T_0},M}\cap C([0,T_0];H^1(\mathbb{R}^d))$ to
\begin{align*}
\begin{split}
&\mathrm{d} Y^k_{\mu}(t)={\rm{i}} \Delta Y^k_{\mu}(t)-{\rm{i}} f_{\mu}(Y^k_{\mu}(t)) \mathrm{d} t-\int_{B} {\rm{i}}h_{\mu}(Y^k_{\mu}(t),z)(\psi((k-1)T_0+t,z)-1) \nu(\mathrm{d} z) \mathrm{d} t, t\in[0,T_0], \\
&Y^k_{\mu}(0)=Y^{k-1}_{\mu}(T_0),
\end{split}
\end{align*}
and
\begin{align*}
\|Y^k_{\mu}(t)\|_{L^2(\mathbb{R}^d)}^2=\|Y^k_{\mu}(0)\|_{L^2(\mathbb{R}^d)}^2=\|Y^{k-1}_{\mu}(T_0)\|_{L^2(\mathbb{R}^d)}^2=\|J_{\mu}u_0\|_{L^2(\mathbb{R}^d)}^2,\ t\in[0,T_0].
\end{align*}
For $j=1,2,3,...$, set
$$
Y^\psi_{\mu}(t)=Y^j_{\mu}(t-(j-1)T_0),\ t\in[(j-1)T_0,jT_0].
$$
Then it is easy to see that $Y^\psi_{\mu}=(Y^\psi_{\mu}(t))_{t\in[0,T]}\in E^{(T, p)}\cap C([0,T];H^1(\mathbb{R}^d))$ is the unique solution to (\ref{prop-proof-yosida-eq1}) on $[0,T]$, and
$$
\|Y^\psi_{\mu}(t)\|_{L^2(\mathbb{R}^d)}^2=\|J_{\mu}u_0\|_{L^2(\mathbb{R}^d)}^2,\ t\in[0,T].
$$
 Moreover,
\begin{eqnarray}\label{Eq Yosida 01}
\sup_{\psi\in W^N}\sup_{\mu>0}\|Y^\psi_{\mu}\|_{E^{(T, p)}}\leq ([\frac{T}{T_0}]+1)M.
\end{eqnarray}

Now let us prove the conservation law.

Assume that $Y^\psi=(Y^\psi(t))_{t\in[0,T]}\in E^{(T, p)}\cap C([0,T];L^2(\mathbb{R}^d))$ is the unique solution to (\ref{eq rate LDP 1}) on $[0,T]$.
The conservation law $\|Y^\psi(t)\|_{L^2(\mathbb{R}^d)}^2=\|u_0\|_{L^2(\mathbb{R}^d)}^2$ for $t\in[0,T]$ is straightforward once we prove
\begin{align}\label{eq-sec3-pro-100}
 \lim_{\mu\rightarrow\infty}\sup_{t\in[0,T]} \|Y_{\mu}^{\psi}(t)-Y^\psi(t)\|_{L^2}=0.
\end{align}
 To see this, we apply Strichartz's inequalities (\ref{Stri-est-2}) and (\ref{Stri-est-3}) to get, for $\widetilde{T}\in(0,T]$,
\begin{align*}
&\left\|  \int_{0}^{\cdot } S_{\cdot -s} f(Y^\psi(s))\mathrm{d} s - \int_{0}^{\cdot } S_{\cdot -s} f_\mu(Y_{\mu}^{\psi}(s))\mathrm{d} s\right\|_{E^{(\widetilde{T},p)}}
\\
&\leq C \| f(Y^\psi)-J_{\mu} f(J_{\mu}Y_{\mu}^{\psi}) \|_{L^{p\prime}(0, \widetilde{T}; L^{r^{\prime}}(\mathbb{R}^{d}))}\\
&\leq C \|J_{\mu} f(J_{\mu}Y_{\mu}^{\psi}) -J_{\mu} f(Y^\psi)\|_{L^{p\prime}(0, \widetilde{T} ; L^{r^{\prime}}(\mathbb{R}^{d}))}+C \|(J_{\mu}  -I)f(Y^\psi) \|_{L^{p\prime}(0, \widetilde{T} ; L^{r^{\prime}}(\mathbb{R}^{d}))}.
\end{align*}
By (\ref{J mu 01}) and using similar arguments as proving (\ref{eq Up 03}),
\begin{align*}
 &\|J_{\mu} f(J_{\mu}Y_{\mu}^{\psi}) -J_{\mu} f(Y^\psi)\|_{L^{p\prime}(0, \widetilde{T} ; L^{r^{\prime}}(\mathbb{R}^{d}))}\\
 &\leq  \| f(J_{\mu}Y_{\mu}^{\psi}) - f(Y^\psi)\|_{L^{p\prime}(0, \widetilde{T}; L^{r^{\prime}}(\mathbb{R}^{d}))}\\
 &\leq C|  \lambda| {\widetilde{T}}^{1-\frac{d\sigma}2}(\|Y_{\mu}^{\psi}\|_{L^{p}(0, {\widetilde{T}}; L^{r}(\mathbb{R}^{d}))}+\|Y^\psi\|_{L^{p}(0, {\widetilde{T}}; L^{r}(\mathbb{R}^{d}))})^{2 \sigma}\|J_{\mu}Y_{\mu}^{\psi}-Y^\psi\|_{L^p(0,{\widetilde{T}};L^r(\mathbb{R}^d))}\\
  &\leq C|  \lambda| {\widetilde{T}}^{1-\frac{d\sigma}2}(\|Y_{\mu}^{\psi}\|_{L^{p}(0, {\widetilde{T}}; L^{r}(\mathbb{R}^{d}))}+\|Y^\psi\|_{L^{p}(0, {\widetilde{T}}; L^{r}(\mathbb{R}^{d}))})^{2 \sigma}\\
  &\ \ \ \ \ \ \ \ \cdot\Big(\|J_{\mu}Y_{\mu}^{\psi}-J_{\mu}Y^\psi\|_{L^p(0,{\widetilde{T}};L^r(\mathbb{R}^d))}+\|J_{\mu}Y^\psi-Y^\psi\|_{L^p(0,{\widetilde{T}};L^r(\mathbb{R}^d))}\Big)\\
&\leq C|  \lambda| {\widetilde{T}}^{1-\frac{d\sigma}2}(\|Y_{\mu}^{\psi}\|_{E^{({\widetilde{T}},p)}}+\|Y^\psi\|_{E^{({\widetilde{T}},p)}})^{2 \sigma}\Big(\|Y_{\mu}^{\psi}-Y^\psi\|_{E^{({\widetilde{T}},p)}}+ \|J_{\mu}Y^\psi-Y^\psi\|_{E^{({\widetilde{T}},p)}}\Big).
\end{align*}
Applying the Minkowski inequality and the Strichartz inequality \eqref{Stri-est-1}, we find
\begin{align}\label{eq LDP1 pri 0}
&\left\|  \int_{0}^{\cdot }\int_{B} S_{\cdot -s} h_{\mu}(Y_{\mu}^{\psi}(s),z)(\psi(s,z)-1)\nu(\mathrm{d} z) \mathrm{d} s -  \int_{0}^{\cdot }\int_{B} S_{\cdot -s} h(Y^\psi(s),z)(\psi(s,z)-1)\nu(\mathrm{d} z) \mathrm{d} s  \right\|_{L^p(0,{\widetilde{T}};L^r(\mathbb{R}^d))}\nonumber\\
&\leq \int_{0}^{{\widetilde{T}} }\int_{B} \|S_{\cdot -s} h_{\mu}(Y_{\mu}^{\psi}(s),z)(\psi(s,z)-1)- S_{\cdot -s} h(Y^\psi(s),z)(\psi(s,z)-1) \|_{L^p(0,{\widetilde{T}};L^r(\mathbb{R}^d))}\nu(\mathrm{d} z) \mathrm{d} s \nonumber\\
&\leq C \int_0^{\widetilde{T}}\int_B \| h_{\mu}(Y_{\mu}^{\psi}(s)) -h(Y^\psi(s)) \|_{L^2(\mathbb{R}^d)}|\psi(s,z)-1|\nu(\mathrm{d} z) \mathrm{d} s,
\end{align}
and
\begin{align}\label{eq LDP1 pri 1}
&\sup _{0 \leq t \leq {\widetilde{T}}}\left\|  \int_{0}^{t }\int_{B} S_{t -s} h_{\mu}(Y_{\mu}^{\psi}(s),z)(\psi(s,z)-1)\nu(\mathrm{d} z) \mathrm{d} s -  \int_{0}^{t }\int_{B} S_{t -s} h(Y^\psi(s),z)(\psi(s,z)-1)\nu(\mathrm{d} z) \mathrm{d} s  \right\|_{L^2(\mathbb{R}^d)}
\nonumber\\
&\leq C \int_0^{\widetilde{T}}\int_B \| h_{\mu}(Y_{\mu}^{\psi}(s),z) -h(Y^\psi(s),z) \|_{L^2(\mathbb{R}^d)}|\psi(s,z)-1|\nu(\mathrm{d} z) \mathrm{d} s.
\end{align}
Combining the above two estimates with (\ref{J mu 01}) gives
\begin{align}\label{eq LDP1 pri 2}
&\left\|  \int_{0}^{\cdot }\int_{B} S_{\cdot -s} h_{\mu}(Y_{\mu}^{\psi}(s),z)(\psi(s,z)-1)\nu(\mathrm{d} z) \mathrm{d} s -  \int_{0}^{\cdot }\int_{B} S_{\cdot -s} h(Y^\psi(s),z)(\psi(s,z)-1)\nu(\mathrm{d} z) \mathrm{d} s  \right\|_{E^{({\widetilde{T}},p)}}
\nonumber\\
&\leq2 C \int_0^{\widetilde{T}}\int_B \| h_{\mu}(Y_{\mu}^{\psi}(s),z) -h(Y^\psi(s),z) \|_{L^2(\mathbb{R}^d)}|\psi(s,z)-1|\nu(\mathrm{d} z) \mathrm{d} s  \nonumber\\
&\leq 2C \int_0^{\widetilde{T}}\int_B \|J_{\mu} h(J_{\mu}Y_{\mu}^{\psi}(s),z) -J_{\mu} h(Y^\psi(s),z) \|_{L^2(\mathbb{R}^d)}|\psi(s,z)-1|\nu(\mathrm{d} z) \mathrm{d} s  \nonumber\\
&\hspace{1cm}+2C \int_0^{\widetilde{T}}\int_B \|J_{\mu} h(Y^\psi(s),z) -h(Y^\psi(s),z)\|_{L^2(\mathbb{R}^d)}|\psi(s,z)-1|\nu(\mathrm{d} z) \mathrm{d} s\nonumber\\
&\leq 2C \int_0^{\widetilde{T}}\int_B \|h(J_{\mu}Y_{\mu}^{\psi}(s),z) -h(Y^\psi(s),z) \|_{L^2(\mathbb{R}^d)}|\psi(s,z)-1|\nu(\mathrm{d} z) \mathrm{d} s  \nonumber\\
&\hspace{1cm}+2C \int_0^{\widetilde{T}}\int_B \|(J_{\mu}-I) h(Y^\psi(s),z)\|_{L^2(\mathbb{R}^d)}|\psi(s,z)-1|\nu(\mathrm{d} z) \mathrm{d} s\nonumber\\
&\leq 2C \int_0^{\widetilde{T}}\int_B \sum_{j=1}^m|z_j|\| g_{j}(J_{\mu}Y_{\mu}^{\psi}(s))-g_{j}(Y^\psi(s)) \|_{L^2(\mathbb{R}^d)}|\psi(s,z)-1|\nu(\mathrm{d} z) \mathrm{d} s  \nonumber\\
&\hspace{1cm}+2C \int_0^{\widetilde{T}}\int_B \|(J_{\mu}-I) h(Y^\psi(s),z)\|_{L^2(\mathbb{R}^d)}|\psi(s,z)-1|\nu(\mathrm{d} z) \mathrm{d} s\nonumber\\
&\leq  2CL_1m^{\frac12} \int_0^{\widetilde{T}}\int_B |z||\psi(s,z)-1| \| J_{\mu}Y_{\mu}^{\psi}(s)-Y^\psi(s) \|_{L^2(\mathbb{R}^d)}\nu(\mathrm{d} z) \mathrm{d} s  \nonumber\\
&\hspace{1cm}+2C \int_0^{\widetilde{T}}\int_B \|(J_{\mu}-I) h(Y^\psi(s),z) \|_{L^2(\mathbb{R}^d)}|\psi(s,z)-1|\nu(\mathrm{d} z) \mathrm{d} s\nonumber\\
&\leq  2CL_1m^{\frac12} \int_0^{\widetilde{T}} \Big( \| J_{\mu}Y_{\mu}^{\psi}-J_{\mu}Y^\psi \|_{L^{\infty}(0,s;L^2(\mathbb{R}^d))}+ \| J_{\mu}Y^\psi-Y^\psi \|_{L^{\infty}(0,s;L^2(\mathbb{R}^d))}\Big) \int_B |z||\psi(s,z)-1|\nu(\mathrm{d} z) \mathrm{d} s  \nonumber\\
&\hspace{1cm}+2C \int_0^{\widetilde{T}}\int_B \|(J_{\mu}-I) h(Y^\psi(s),z)  \|_{L^2(\mathbb{R}^d)}|\psi(s,z)-1|\nu(\mathrm{d} z) \mathrm{d} s\nonumber\\
&\leq  2CL_1m^{\frac12} \int_0^{\widetilde{T}} \Big( \|Y_{\mu}^{\psi}-Y^\psi \|_{E^{(s,p)}}+ \| J_{\mu}Y^\psi-Y^\psi \|_{E^{(s,p)}}\Big) \int_B |z||\psi(s,z)-1|\nu(\mathrm{d} z) \mathrm{d} s  \nonumber\\
&\hspace{1cm}+2C \int_0^{\widetilde{T}}\int_B \|(J_{\mu}-I) h(Y^\psi(s),z)  \|_{L^2(\mathbb{R}^d)}|\psi(s,z)-1|\nu(\mathrm{d} z) \mathrm{d} s
.
\end{align}
Therefore, we obtain
\begin{align*}
  & \| Y_{\mu}^{\psi}-Y^\psi\|_{E^{({\widetilde{T}},p)}}\\
  &\leq
     C\|Y_{\mu}^{\psi}(0)-Y^\psi(0)\|_{L^2(\mathbb{R}^d)}\\
     &\hspace{0.5cm}+
  C|  \lambda| {\widetilde{T}}^{1-\frac{d\sigma}2}(\|Y_{\mu}^{\psi}\|_{E^{({\widetilde{T}},p)}}+\|Y^\psi\|_{E^{({\widetilde{T}},p)}})^{2 \sigma}\Big(\|Y_{\mu}^{\psi}-Y^\psi\|_{E^{({\widetilde{T}},p)}}+ \|J_{\mu}Y^\psi-Y^\psi\|_{E^{({\widetilde{T}},p)}}\Big)\\
     &\hspace{0.5cm}+C \|(J_{\mu}  -I)f(Y^\psi) \|_{L^{p\prime}([0, {\widetilde{T}}] ; L^{r^{\prime}}(\mathbb{R}^{d}))}\\
     &\hspace{0.5cm}+2CL_1m^{\frac12}  \int_0^{\widetilde{T}} \Big( \| Y_{\mu}^{\psi}-Y^\psi \|_{E^{(s,p)}}+ \| J_{\mu}Y^\psi-Y^\psi \|_{E^{(s,p)}}\Big)\int_B |z||\psi(s,z)-1|\nu(\mathrm{d} z) \mathrm{d} s  \\
&\hspace{0.5cm}+2C \int_0^{\widetilde{T}}\int_B \|(J_{\mu}-I) h(Y^\psi(s),z)  \|_{L^2(\mathbb{R}^d)}|\psi(s,z)-1|\nu(\mathrm{d} z) \mathrm{d} s.
\end{align*}
It follows that
\begin{align}\label{eq Appro 01}
  &   \limsup_{\mu\rightarrow \infty}\| Y_{\mu}^{\psi}-Y^\psi\|_{E^{({\widetilde{T}},p)}}\nonumber\\
  &\leq
     \limsup_{\mu\rightarrow \infty}C\|Y_{\mu}^{\psi}(0)-Y^\psi(0)\|_{L^2(\mathbb{R}^d)}\nonumber\\
     &\hspace{0.5cm}+
   C|  \lambda| {\widetilde{T}}^{1-\frac{d\sigma}2}(\sup_{\hbar\in W^N}\sup_{\mu\in[1,\infty)}\|Y_{\mu}^{\hbar}\|_{E^{(T,p)}}+\|Y^\psi\|_{E^{({\widetilde{T}},p)}})^{2 \sigma} \limsup_{\mu\rightarrow \infty}\|Y_{\mu}^{\psi}-Y^\psi\|_{E^{({\widetilde{T}},p)}}\nonumber\\
     &\hspace{0.5cm}+2CL_1m^{\frac12} \int_0^{\widetilde{T}}  \limsup_{\mu\rightarrow \infty} \| Y_{\mu}^{\psi}-Y^\psi \|_{E^{(s,p)}} \int_B |z||\psi(s,z)-1|\nu(\mathrm{d} z) \mathrm{d} s .
\end{align}
Here (\ref{J mu 03}) and the dominated convergence theorem have been used. Indeed, (\ref{J mu 03}) implies that $\limsup_{\mu\rightarrow \infty}C\|Y_{\mu}^{\psi}(0)-Y^\psi(0)\|_{L^2(\mathbb{R}^d)}=0$.

By (\ref{Eq Yosida 01}), taking ${\widetilde{T}}=T_1$ small enough such that
\begin{eqnarray}\label{eq T1}
C|\lambda| T_1^{1-\frac{d\sigma}2}(\sup_{\hbar\in W^N}\sup_{\mu\in[1,\infty)}\|Y_{\mu}^{\hbar}\|_{E^{(T,p)}}+\|Y^\psi\|_{E^{(T,p)}})^{2 \sigma}<\frac12,
\end{eqnarray}
  it follows that
\begin{align*}
  & \limsup_{\mu\rightarrow \infty}\| Y_{\mu}^{\psi}-Y^\psi\|_{E^{(T_1,p)}}\\
    &\leq 4CL_1m^{\frac12} \int_0^{T_1}   \limsup_{\mu\rightarrow \infty} \| Y_{\mu}^{\psi}-Y^\psi \|_{E^{(s,p)}}   \int_B |z||\psi(s,z)-1|\nu(\mathrm{d} z) \mathrm{d} s.
\end{align*}
Since $\sup_{\hbar\in W^N}\int_0^{T_1}\int_B |z||\hbar(s,z)-1|\nu(\mathrm{d} z)\mathrm{d}t<\infty$ (see Lemma \ref{lemma-est-ass}), by Gronwall's inequality
\begin{align}\label{eq Appro 02}
\limsup _{\mu \rightarrow \infty} \left\|Y_{\mu}^{\psi}-Y^\psi\right\|_{E^{(T_1,p)}}
=0.
\end{align}
Using similar arguments as proving (\ref{eq Appro 01}) and by (\ref{eq T1}), we can get
\begin{align}\label{eq Appro 03}
  &   \limsup_{\mu\rightarrow \infty}\| Y_{\mu}^{\psi}-Y^\psi\|_{E^{(T_1,2T_1,p)}}\nonumber\\
  &\leq
     \limsup_{\mu\rightarrow \infty}2C\|Y_{\mu}^{\psi}(T_1)-Y^\psi(T_1)\|_{L^2(\mathbb{R}^d)}\nonumber\\
     &
     +4CL_1m^{\frac12} \int_{T_1}^{2T_1}  \limsup_{\mu\rightarrow \infty} \| Y_{\mu}^{\psi}-Y^\psi \|_{E^{(T_1,s,p)}} \int_B |z||\psi(s,z)-1|\nu(\mathrm{d} z) \mathrm{d} s .
\end{align}
Applying (\ref{eq Appro 02}) and Gronwall's inequality, we have
\begin{align*}
\limsup _{\mu \rightarrow \infty} \left\|Y_{\mu}^{\psi}-Y^\psi\right\|_{E^{(T_1,2T_1,p)}}
=0.
\end{align*}
Repeating this procedure, we can prove
\begin{align*}
\limsup _{\mu \rightarrow \infty} \left\|Y_{\mu}^{\psi}-Y^\psi\right\|_{E^{(T,p)}}=0,
\end{align*}
which verifies \eqref{eq-sec3-pro-100}.

\textbf{Step 4 (Global existence)} Recall $Y_1^\psi=\Upsilon^{0}(u_{0}, \psi)$ be the unique solution of \eqref{eq rate LDP 1} on $[0,T_0]$. By the $L^2$-norm conservation law, we see that $\|Y_1^\psi(t)\|_{L^2}$ is constant for all $t\in[0,T_0]$.  Using arguments similar to that in Step 3.1, we can construct a function
$Y^\psi=(Y^\psi(t))_{t\in[0,T]}\in E^{(T, p)}\cap C([0,T];L^2(\mathbb{R}^d))$, which is the unique solution to (\ref{eq rate LDP 1}) on $[0,T]$, and
$$
\|Y^\psi(t)\|_{L^2(\mathbb{R}^d)}^2=\|u_0\|_{L^2(\mathbb{R}^d)}^2,\ t\in[0,T].
$$
 Moreover,
\begin{eqnarray}\label{eq sko 01}
\sup_{\psi\in W^N}\|Y^\psi\|_{E^{(T, p)}}\leq ([\frac{T}{T_0}]+1)M.
\end{eqnarray}
The proof of this proposition is complete.

\end{proof}

From the proof of Proposition \ref{prop 5}, we have
\begin{corollary}\label{coro-uni-bou}
For $\psi\in W$ and $\mu>0$, let $Y^\psi$ and $Y^\psi_\mu$ be the unique solutions to (\ref{eq rate LDP 1}) and (\ref{prop-proof-yosida-eq1}), respectively. Then,

(1) For any $\psi\in W$,
\begin{eqnarray*}
\lim_{\mu\rightarrow\infty}\|Y^\psi-Y^\psi_\mu\|_{E^{(T,p)}}=0.
\end{eqnarray*}

(2) For any $N\in\mathbb{N}$, there exists a constant $C_{N}$ such that
\begin{eqnarray}\label{est uniform 1}
\sup_{\psi\in W^N}\|Y^\psi\|_{E^{(T,p)}}+\sup_{\psi\in W^N}\sup_{\mu>0}\|Y^\psi_\mu\|_{E^{(T,p)}}\leq C_{N}<\infty.
\end{eqnarray}

(3) For any $\psi\in W$, $\mu>0$ and $t\in[0,T]$, $\|Y^\psi_\mu(t)\|_{L^2(\mathbb{R}^d)}=\|J_\mu u_0\|_{L^2(\mathbb{R}^d)}$.

\end{corollary}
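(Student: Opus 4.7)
The plan is to observe that all three claims are in fact intermediate facts already established inside the proof of Proposition \ref{prop 5}; the corollary is essentially a bookkeeping statement, and my proof would simply extract and reorganize the relevant steps.

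For claim (3), I would start from the regularized equation \eqref{prop-proof-yosida-eq1} on $[0,T_0]$. Since the Yosida approximants $f_\mu(w)$ and $h_\mu(w,z)$ live in $H^1(\mathbb{R}^d)$, pairing the equation with $Y^\psi_\mu$ in the $H^{-1}$-$H^1$ duality makes sense; because $\mathrm{i}\Delta$ is skew-adjoint on $H^1$, and because \eqref{J mu 02} lets me rewrite $\langle J_\mu f(J_\mu w),w\rangle_{H^{-1},H^1} = \langle f(J_\mu w),J_\mu w\rangle$, a quantity whose real part vanishes by the $\mathrm{i}$-prefactor in $f$ and $h(\cdot,z)$, I obtain $\tfrac{d}{dt}\|Y^\psi_\mu(t)\|_{L^2}^2 = 0$ on $[0,T_0]$, hence $\|Y^\psi_\mu(t)\|_{L^2}=\|J_\mu u_0\|_{L^2}$. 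Concatenating the pieces $Y^k_\mu$ across $[(k-1)T_0,kT_0]$ exactly as in Step 3 of the proof of Proposition \ref{prop 5} extends the identity to $[0,T]$. Claim (2) for the regularized solutions is then the content of \eqref{Eq Yosida 01}, which was produced by running the fixed-point argument with $\Upsilon_\mu$ in place of $\Upsilon$; the contraction constant is uniform in $\mu$ thanks to the contractivity \eqref{J mu 01} of $J_\mu$, and the $L^2$-conservation just proved allows iteration across the $[T/T_0]+1$ subintervals of length $T_0$. The corresponding bound for $Y^\psi$ is \eqref{eq sko 01}, obtained by the same iteration using the genuine conservation law.

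For claim (1), I would reproduce the argument carried out in \eqref{eq-sec3-pro-100}--\eqref{eq Appro 03}. Using the Strichartz estimates \eqref{Stri-est-2}--\eqref{Stri-est-3} on the difference $Y^\psi_\mu-Y^\psi$, the Lipschitz bound \eqref{eq-est-nonlinear-1} on $|w|^{2\sigma}w$, the Lipschitz condition \eqref{cond-g-1} on $g_j$, the contractivity \eqref{J mu 01}, and the strong convergence \eqref{J mu 03} of $J_\mu\to I$, one obtains on a short interval $[0,T_1]$ an inequality of the form
\begin{align*}
\|Y^\psi_\mu-Y^\psi\|_{E^{(T_1,p)}}
&\leq \tfrac{1}{2}\|Y^\psi_\mu-Y^\psi\|_{E^{(T_1,p)}} + r_\mu(T_1) \\
&\quad + C\int_0^{T_1}\|Y^\psi_\mu-Y^\psi\|_{E^{(s,p)}}\Big(\int_B|z||\psi(s,z)-1|\nu(\mathrm{d} z)\Big)\mathrm{d} s,
\end{align*}
where $T_1$ is chosen to satisfy the smallness condition \eqref{eq T1} (which is permissible thanks to the uniform bound from claim (2)), and the remainder $r_\mu(T_1)\to 0$ by \eqref{J mu 03} and the dominated convergence theorem, whose applicability is ensured by Lemma \ref{lemma-est-ass}. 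Gronwall's inequality on $[0,T_1]$ then yields $\limsup_\mu\|Y^\psi_\mu-Y^\psi\|_{E^{(T_1,p)}}=0$, and an iteration on successive intervals $[kT_1,(k+1)T_1]$ analogous to \eqref{eq Appro 03} propagates the convergence to the full interval $[0,T]$.

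The only subtle point is that $T_1$ must be chosen uniformly in $\mu$, which is exactly why claims (2) and (3) have to be established before claim (1): the uniform-in-$\mu$ bound on $\|Y^\psi_\mu\|_{E^{(T,p)}}$ from (2) is what allows the Strichartz-based smallness estimate to be closed on each subinterval without the constant blowing up with $\mu$.
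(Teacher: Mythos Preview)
Your proposal is correct and matches the paper's approach exactly: the paper simply states ``From the proof of Proposition~\ref{prop 5}, we have\ldots'' with no further argument, and what you have written is precisely the extraction of the relevant passages (Step 3 for the conservation law and convergence, together with the iterated fixed-point bounds \eqref{Eq Yosida 01} and \eqref{eq sko 01}) that justify this. Your remark about the logical ordering --- that the uniform-in-$\mu$ bound from (2) must precede the convergence (1) so that $T_1$ in \eqref{eq T1} can be chosen independently of $\mu$ --- is accurate and in fact reflects how the paper's proof of Proposition~\ref{prop 5} is structured.
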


\section{Proof of the main result: Theorem \ref{Th ex 1}}\label{sec-proof-Th}

After the preparations in Section \ref{sec-SE} , we are ready to prove  the large deviation result in Theorem \ref{Th ex 1}. Proposition \ref{prop 5} allows us to define a map
\begin{eqnarray}\label{def G0}
\Gamma^0:W\ni \psi \mapsto Y^{\psi}\in C([0,T];L^2(\mathbb{R}^d))\cap L^{p}([0, T] ; L^{r}(\mathbb{R}^{d})),
\end{eqnarray}
here $Y^\psi$ is the unique solution of (\ref{eq rate LDP 1}).

By Proposition \ref{Prop solution} and the Yamada-Watanabe theorem, 
there exists a map
$$\Gamma^\varepsilon:M_{FC}\big([0,T]\times B\big)\rightarrow D([0,T];L^2(\mathbb{R}^d))\cap L^p(0,T;L^r(\mathbb{R}^d))$$
 such that if $\eta$ is a Poisson random measure on $[0,T]\times B$ with intensity $\text{Leb}_T \otimes{\eps}^{-1}\nu(dz)$, on a stochastic basis $(\Omega^1,\mathcal{F}^1,\mathbb{P}^1,\mathbb{F}^1)$ with $\mathbb{F}^1 =\{\mathcal{F}^1_t,t\in[0,T]\}$ satisfying the usual conditions, then the process $Y^{\varepsilon}$ defined by
$$
Y^{\varepsilon}:=\Gamma^\varepsilon(\eta)
$$
is the unique solution of following equation
 \begin{align}
\begin{split}\label{eq Y0}
\mathrm{d} Y^{\varepsilon}(t)=&\mathrm{i} \left[\Delta Y^{\varepsilon}(t)-\lambda|Y^{\varepsilon}(t)|^{2 \sigma} Y^{\varepsilon}(t)\right] \mathrm{d} t+\int_{B}[\Phi^\varepsilon(z, Y^{\varepsilon}(t-))-Y^{\varepsilon}(t-)] \Big(\eta(\mathrm{d} z, \mathrm{d} t)-{\eps}^{-1}\nu(\mathrm{d} z) \mathrm{d} t\Big) \\
&+\varepsilon^{-1}\int_{B}\Big[\Phi^\varepsilon(z, Y^\varepsilon(t))-Y^\varepsilon(t)+\varepsilon\, \mathrm{i}  \sum_{j=1}^{m} z_{j} g_{j}(Y^\varepsilon(t))\Big] \nu(\mathrm{d} z) \mathrm{d} t, \quad t\in[0,T],\\
 Y^{\varepsilon}(0)=&u_0,
 \end{split}
\end{align}
or in the mild form
\begin{align}
\begin{split}
Y^\varepsilon(t)=& S_{t} u_{0}-\mathrm{i} \int_{0}^{t } S_{t -s}(\lambda|Y^\varepsilon(s)|^{2 \sigma} Y^\varepsilon(s)) \mathrm{d} s\\
&+\int_{0}^{t} \int_{B}  S_{t-s}\left[\Phi^\varepsilon(z, Y^\varepsilon(s- ))-Y^\varepsilon(s- )\right] \Big(\eta(\mathrm{d} z, \mathrm{d} t)-{\eps}^{-1}\nu(\mathrm{d} z) \mathrm{d} t\Big)\nonumber\\
&+\varepsilon^{-1} \int_{0}^{t } \int_{B} S_{t -s}\Big[\Phi^\varepsilon(z, Y^\varepsilon(s)))-Y^{\varepsilon}(s)+\varepsilon\, \mathrm{i} \sum_{j=1}^{m} z_{j} g_{j}(Y^\varepsilon(s))\Big] \nu(\mathrm{d} z) \mathrm{d} s.
 \end{split}
\end{align}
Moreover, for all $t\in[0,T]$,
  $$
  \|Y^\varepsilon(t)\|_{L^2(\mathbb{R}^d)}=\|u_0\|_{L^2(\mathbb{R}^d)},\ \mathbb{P}^1\text{-a.s.}
  $$

 By the definition of $\Gamma^\varepsilon$, it is easy to see that
 $u^\varepsilon=\Gamma^\varepsilon(N^{\varepsilon^{-1}})$ is the unique solution to \eqref{eq X0-1}.

Now, for any fixed $m\in(0,\infty)$ and $\psi_\varepsilon\in \mathcal{W}^m$, by \cite[Theorem 6.1]{BPZ} (i.e. a Girsanov theorem for Poisson random measures),  set $\varphi_\varepsilon=1/\psi_\varepsilon$ we have
\begin{itemize}
\item[(S1)] The process $\mathcal{M}^{\eps}_t(\varphi_{\eps})$, $t\in[0,T]$, defined by
\begin{eqnarray*}
\label{eqn-M^eps}
&&\hspace{-1truecm}\lefteqn{ \mathcal{M}^{\eps}_t(\varphi_{\eps})=\exp\Big(
                     \int_{(0,t]\times{B}\times[0,{\eps}^{-1}\psi_{\eps}(s,z)]}\log(\varphi_{\eps}(s,z))N(ds,dz,dr)}
                     \\&&\ \ \ \ \ \ \ +
                     \int_{(0,t]\times{B}\times[0,{\eps}^{-1}\psi_{\eps}(s,z)]}\Big(-\varphi_{\eps}(s,z)+1\Big)\nu(dz)\,dsdr
                       \Big), \;\; t\in [0,T],
\end{eqnarray*}
is an $\mathbb{F}$-martingale on $(\Omega,\mathcal{F},\mathbb{F},P)$.
\item[(S2)] The formula
$$
\mathbb{P}^{\eps}_T({O})=\int_{O}\mathcal{M}^{\eps}_T(\varphi_{\eps})\,dP,\ \ \forall {O}\in \mathcal{F}
$$
defines a probability measure on $(\Omega,\mathcal{F})$.
\item[(S3)] The measures $P$ and $\mathbb{P}^{\eps}_T$ are equivalent.
\item[(S4)] The laws on $M_{FC}\big([0,T]\times B\big)$ of  the following two random variables are equal: (i) $N^{{\eps}^{-1}\psi_{\eps}}$ defined on probability space $(\Omega,\mathcal{F},\mathbb{F},\mathbb{P}^{\eps}_T)$
    and (ii) $N^{{\eps}^{-1}}$ defined on probability space $(\Omega,\mathcal{F},\mathbb{F},P)$.
 \end{itemize}

By the definition of $\Gamma^\varepsilon$ and (S4), the process
\begin{equation}\label{sol-control}
X^{\psi_\varepsilon}:=\Gamma^\varepsilon(N^{\varepsilon^{-1}\psi_\varepsilon})
\end{equation}
is the unique solution of (\ref{eq Y0}) with $(\Omega^1,\mathcal{F}^1,\mathbb{P}^1,\mathbb{F}^1,\eta)$ replaced by $(\Omega,\mathcal{F},\mathbb{F},\mathbb{P}^{\eps}_T,N^{{\eps}^{-1}\psi_{\eps}})$. Moreover,  for all $t\in[0,T]$,
  $$
  \|X^{\psi_\varepsilon}(t)\|_{L^2(\mathbb{R}^d)}=\|u_0\|_{L^2(\mathbb{R}^d)},\ \mathbb{P}^{\eps}_T\text{-a.s.}
  $$
Combining the above equality with (S3), we obtain  for all $t\in[0,T]$,
  \begin{eqnarray}\label{eq lem uniform 01}
\|X^{\psi_\varepsilon}(t)\|_{L^2(\mathbb{R}^d)}=\|u_0\|_{L^2(\mathbb{R}^d)},\ P\text{-a.s.}
  \end{eqnarray}
Moreover, (S1)--(S4) implies that
$X^{\psi_\varepsilon}$ is the unique solution of the integral equation on the probability space $(\Omega,\mathcal{F},\mathbb{F},P)$:
 \begin{align}
X^{\psi_\varepsilon}(t)&= S_{t} u_{0}-\mathrm{i} \int_{0}^{t } S_{t -s}(\lambda|X^{\psi_\varepsilon}(s)|^{2 \sigma} X^{\psi_\varepsilon}(s)) \mathrm{d} s\nonumber\\
&\hspace{0.5cm}+ \int_{0}^{t} \int_{B}  S_{t-s}\left[\Phi^\varepsilon(z, X^{\psi_\varepsilon}(s-))-X^{\psi_\varepsilon}(s- )\right] \Big(N^{\varepsilon^{-1}\psi_\varepsilon}(\mathrm{d} s, \mathrm{d} z)-\varepsilon^{-1}\nu(\dif z)\dif s\Big)\nonumber\\
&\hspace{0.5cm}+\varepsilon^{-1}\int_{0}^{t } \int_{B} S_{t -s}\Big[\Phi^\varepsilon(z, X^{\psi_\varepsilon}(s)))-X^{\psi_\varepsilon}(s)+\varepsilon\;\mathrm{i} \sum_{j=1}^{m} z_{j} g_{j}(X^{\psi_\varepsilon}(s))\Big] \nu(\mathrm{d} z) \mathrm{d} s\label{EQ4 LDP 2}\\
&= S_{t} u_{0}-\mathrm{i} \int_{0}^{t } S_{t -s}(\lambda|X^{\psi_\varepsilon}(s)|^{2 \sigma} X^{\psi_\varepsilon}(s)) \mathrm{d} s\nonumber\\
&\hspace{0.5cm}+ \int_{0}^{t} \int_{B}  S_{t-s}\left[\Phi^\varepsilon(z, X^{\psi_\varepsilon}(s-))-X^{\psi_\varepsilon}(s- )\right] \tilde{N}^{\varepsilon^{-1}\psi_\varepsilon}(\mathrm{d} s, \mathrm{d} z)\nonumber\\
&\hspace{0.5cm}+ \varepsilon^{-1}\int_{0}^{t} \int_{B}  S_{t-s}\left[\Phi^\varepsilon(z, X^{\psi_\varepsilon}(s))-X^{\psi_\varepsilon}(s )\right] \Big(\psi_\varepsilon(s,z)-1\Big)\nu(\dif z)\dif s\nonumber\\
&\hspace{0.5cm}+\varepsilon^{-1}\int_{0}^{t } \int_{B} S_{t -s}\Big[\Phi^\varepsilon(z, X^{\psi_\varepsilon}(s)))-X^{\psi_\varepsilon}(s)+\varepsilon\;\mathrm{i} \sum_{j=1}^{m} z_{j} g_{j}(X^{\psi_\varepsilon}(s))\Big] \nu(\mathrm{d} z) \mathrm{d} s\label{EQ4 LDP 2-01}\\
&= S_{t} u_{0}-\mathrm{i} \int_{0}^{t } S_{t -s}(\lambda|X^{\psi_\varepsilon}(s)|^{2 \sigma} X^{\psi_\varepsilon}(s)) \mathrm{d} s\nonumber\\
&\hspace{0.5cm}+ \int_{0}^{t} \int_{B}  S_{t-s}\left[\Phi^\varepsilon(z, X^{\psi_\varepsilon}(s-))-X^{\psi_\varepsilon}(s- )\right] \tilde{N}^{\varepsilon^{-1}\psi_\varepsilon}(\mathrm{d} s, \mathrm{d} z)\nonumber\\
&\hspace{0.5cm}+\varepsilon^{-1}\int_{0}^{t } \int_{B} S_{t -s}\Big[\Phi^\varepsilon(z, X^{\psi_\varepsilon}(s)))-X^{\psi_\varepsilon}(s)+\varepsilon\;\mathrm{i} \sum_{j=1}^{m} z_{j} g_{j}(X^{\psi_\varepsilon}(s))\Big]\psi_\varepsilon(s,z) \nu(\mathrm{d} z) \mathrm{d} s\nonumber\\
&\hspace{0.5cm}-
\int_{0}^{t} \int_{B}  S_{t-s} \mathrm{i} \sum_{j=1}^{m} z_{j} g_{j}(X^{\psi_\varepsilon}(s))\Big(\psi_\varepsilon(s,z)-1\Big)\nu(\dif z)\dif s.
\label{EQ4 LDP 2-02}
\end{align}
For the details of the proof of the above result, we refer \cite[Lemma 7.1]{BPZ}.

According to \cite[Theorem 4.4]{LSZ},  in which the conditions are an adaption of the original
conditions given in \cite{Budhiraja-Dupuis-Maroulas.} \cite{Budhiraja-Chen-Dupuis}, to complete the proof of the theorem,
it is sufficient to verify the following two Conditions \eqref{Prop-LDP1-eq} and \eqref{prop-LDP-cond-2}.

\bp\label{Yu-con}Let Assumption \ref{assm-g} hold.
For any given $N\in\mathbb{N}$, let $\psi_0,\psi_n \in W^N$, $n\in\mathbb{N}$ be such that
$\psi_n\rightarrow \psi_0$ in $W^N$ as $n\rightarrow\infty$. Then
\begin{align}\label{Prop-LDP1-eq}
\lim_{n\rightarrow\infty}\|{\Gamma}^0(\psi_n)-{\Gamma}^0(\psi_0)\|_{E^{(T,p)}}=0.
\end{align}

\ep

\begin{proposition}\label{lem LDP 2} Let Assumption \ref{assm-g} hold.
For any given $N\in\mathbb{N}$, let $\{\psi_\e,~\e>0\}\subset \mathcal{W}^N$. Then, for the solution $X^{\psi_\e}:=\Gamma^\varepsilon(\varepsilon N^{\varepsilon^{-1}\psi_\varepsilon})$ to \eqref{EQ4 LDP 2},
\begin{align}\label{prop-LDP-cond-2}
\lim_{\e\rightarrow0}P\left(\|X^{\psi_\e}-{\Gamma}^0(\psi_{\varepsilon})\|_{ E^{(T,p)}}\geq \delta\right)=0,\quad\quad\text{for any } \delta>0.
\end{align}

\end{proposition}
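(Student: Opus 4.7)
Setting $Z^\varepsilon := X^{\psi_\varepsilon} - Y^{\psi_\varepsilon}$ where $Y^{\psi_\varepsilon} = \Gamma^0(\psi_\varepsilon)$, and subtracting the skeleton equation \eqref{eq rate LDP 1} for $Y^{\psi_\varepsilon}$ from the mild formulation \eqref{EQ4 LDP 2-02} of $X^{\psi_\varepsilon}$, I obtain $Z^\varepsilon = J_1^\varepsilon + J_2^\varepsilon + J_3^\varepsilon + J_4^\varepsilon$, where $J_1^\varepsilon$ is the Schr\"odinger nonlinearity difference $-\mathrm{i}\lambda\int_0^t S_{t-s}(|X^{\psi_\varepsilon}|^{2\sigma}X^{\psi_\varepsilon} - |Y^{\psi_\varepsilon}|^{2\sigma}Y^{\psi_\varepsilon})\mathrm{d}s$; $J_2^\varepsilon$ is the compensated Poisson stochastic integral against $\tilde{N}^{\varepsilon^{-1}\psi_\varepsilon}$; $J_3^\varepsilon$ is the Marcus second-order correction $\varepsilon^{-1}\int_0^t\!\int_B S_{t-s}[\Phi^\varepsilon(z,X^{\psi_\varepsilon})-X^{\psi_\varepsilon}+\varepsilon\mathrm{i}\sum_j z_j g_j(X^{\psi_\varepsilon})]\psi_\varepsilon\nu(\mathrm{d}z)\mathrm{d}s$; and $J_4^\varepsilon = -\int_0^t\!\int_B S_{t-s}\mathrm{i}\sum_j z_j[g_j(X^{\psi_\varepsilon})-g_j(Y^{\psi_\varepsilon})](\psi_\varepsilon-1)\nu(\mathrm{d}z)\mathrm{d}s$. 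The goal is to show that $J_2^\varepsilon$ and $J_3^\varepsilon$ are $o(1)$ in $E^{(T,p)}$ in probability, and then to close $J_1^\varepsilon$ and $J_4^\varepsilon$ by a Gr\"onwall-type iteration on subintervals.

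\textbf{Step 1: uniform $L^p(L^r)$ bound via localization.} The $L^2$-conservation \eqref{eq lem uniform 01} yields the $L^\infty(0,T;L^2)$-bound gratis, but the super-linear nonlinearity $|\cdot|^{2\sigma}\cdot$ obstructs a deterministic $L^p(0,T;L^r)$ bound on $X^{\psi_\varepsilon}$. I would introduce the stopping time $\tau_R^\varepsilon := \inf\{t\in[0,T] : \|X^{\psi_\varepsilon}\|_{E^{(0,t,p)}} > R\}\wedge T$ and prove the key localization claim $\lim_{R\to\infty}\sup_{\varepsilon\in(0,1)}\sup_{\psi_\varepsilon\in\mathcal{W}^N}P(\tau_R^\varepsilon < T) = 0$. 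This relies on the deterministic Strichartz estimates \eqref{Stri-est-2}--\eqref{Stri-est-3} applied to the nonlinear and drift terms, the stochastic Strichartz inequality \eqref{sto-stri-est-1} for the jump part, the $L^2$-preserving property of $\Phi^\varepsilon$ (so $\|\Phi^\varepsilon(t,z,x)-x\|_{L^2} \leq Ct\varepsilon|z|\|x\|_{L^2}$), the cancellation of the $\varepsilon^{-1}$ pre-factor in the Marcus drift through the identity in Step 2, and the uniform integrability in \eqref{eq lemma-est-ass 02}--\eqref{eq lemma-est-ass 03} which allows a partition of $[0,T]$ into finitely many subintervals where the Gr\"onwall coefficients for $X^{\psi_\varepsilon}$ can be made small. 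This is the principal obstacle, since the lack of compactness of embeddings precludes the tightness-based approach of \cite{BPZ, LSZ, MP-21, YZZ}.

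\textbf{Step 2: smallness of $J_2^\varepsilon$ and $J_3^\varepsilon$.} Using \eqref{sto-stri-est-1} adapted to the intensity $\varepsilon^{-1}\psi_\varepsilon\nu$ together with $\|\Phi^\varepsilon(z,x)-x\|_{L^2} \leq C\varepsilon|z|\|u_0\|_{L^2}$, I expect
$$
\mathbb{E}\|J_2^\varepsilon\|_{E^{(T,p)}}^2 \leq C\varepsilon\int_0^T\!\!\int_B |z|^2\psi_\varepsilon(s,z)\nu(\mathrm{d}z)\mathrm{d}s + C\varepsilon^2\int_0^T\!\!\int_B |z|^2\psi_\varepsilon\nu\,\mathrm{d}s,
$$
which is $O(\varepsilon)$ uniformly on $\mathcal{W}^N$ by \eqref{eq lemma-est-ass 01}; hence $J_2^\varepsilon \to 0$ in probability. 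For $J_3^\varepsilon$, integrating the defining ODE for $\Phi^\varepsilon$ yields the identity
$$
\Phi^\varepsilon(z,x)-x+\varepsilon\mathrm{i}\sum_{j=1}^m z_j g_j(x) = -\varepsilon\mathrm{i}\sum_{j=1}^m z_j\int_0^1[g_j(\Phi^\varepsilon(r,z,x))-g_j(x)]\mathrm{d}r,
$$
whose $L^2$-norm is $\leq C\varepsilon|z|^2\|u_0\|_{L^2}$ by \eqref{cond-g-1} and the $L^2$-preserving bound above; the Strichartz estimates \eqref{Stri-est-2}--\eqref{Stri-est-3} together with \eqref{eq lemma-est-ass 01} then give $\|J_3^\varepsilon\|_{E^{(T,p)}} \leq C\varepsilon$, so $J_3^\varepsilon \to 0$ in $L^1(\Omega; E^{(T,p)})$.

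\textbf{Step 3: Gr\"onwall closure.} On $[0,t\wedge\tau_R^\varepsilon]$, applying the bound \eqref{eq Up 03} to $J_1^\varepsilon$ with $\|X^{\psi_\varepsilon}\|_{E^{(0,\tau_R^\varepsilon,p)}} \leq R$ and $\|Y^{\psi_\varepsilon}\|_{E^{(T,p)}} \leq C_N$ (Corollary \ref{coro-uni-bou}) yields $\|J_1^\varepsilon\|_{E^{(0,t\wedge\tau_R^\varepsilon,p)}} \leq C|\lambda|(t\wedge\tau_R^\varepsilon)^{1-d\sigma/2}(R+C_N)^{2\sigma}\|Z^\varepsilon\|_{E^{(0,t\wedge\tau_R^\varepsilon,p)}}$, and \eqref{eq Up 07} with \eqref{cond-g-1} gives $\|J_4^\varepsilon\|_{E^{(0,t\wedge\tau_R^\varepsilon,p)}} \leq 2CL_1m^{1/2}\int_0^t\|Z^\varepsilon\|_{E^{(0,s\wedge\tau_R^\varepsilon,p)}}\int_B|z||\psi_\varepsilon(s,z)-1|\nu(\mathrm{d}z)\mathrm{d}s$. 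Partitioning $[0,T]$ into finitely many subintervals on which, by \eqref{eq lemma-est-ass 03}, the coefficient on the right is less than $1/2$, absorbing and iterating across consecutive subintervals produces $\|Z^\varepsilon\|_{E^{(0,T\wedge\tau_R^\varepsilon,p)}} \leq C_{N,R}(\|J_2^\varepsilon\|_{E^{(T,p)}} + \|J_3^\varepsilon\|_{E^{(T,p)}})$. Finally, for any $\delta > 0$,
$$
P(\|Z^\varepsilon\|_{E^{(T,p)}} \geq \delta) \leq P(\tau_R^\varepsilon < T) + P\bigl(C_{N,R}(\|J_2^\varepsilon\|_{E^{(T,p)}} + \|J_3^\varepsilon\|_{E^{(T,p)}}) \geq \delta\bigr),
$$
and sending first $\varepsilon\to 0$ (using Step 2) and then $R\to\infty$ (using Step 1) yields \eqref{prop-LDP-cond-2}.
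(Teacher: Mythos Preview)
Your proposal is correct and mirrors the paper's proof: the decomposition $J_1,\dots,J_4$ coincides with the paper's $\Pi_1,\dots,\Pi_4$, Steps~2--3 reproduce the paper's Strichartz/stochastic-Strichartz estimates and subinterval iteration, and the final probability splitting is identical. One point to make explicit in Step~1: the a priori inequality for $\|X^{\psi_\varepsilon}\|_{L^p(0,t;L^r)}$ takes the super-linear form $u\leq A+Ct^{1-d\sigma/2}u^{2\sigma+1}$, so an ordinary Gr\"onwall does not close it---the paper handles this via the bootstrap/continuity Lemma~\ref{lem-est-nonlinearity} (applied on the event where $A=M_\varepsilon^T$ is bounded, then iterated on subintervals), which is the precise mechanism you need for your localization claim.
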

The proofs of Propositions \ref{Yu-con} and \ref{lem LDP 2} are given in Sections \ref{sect-prop-1}  and \ref{sect-prop-2} respectively.

\section{Proof of Proposition \ref{Yu-con}}\label{sect-prop-1}

Consider the following equation $n=0,1,2,\cdots$
\begin{align}\label{prop-eq}
Y^{n}(t)= S_{t} u_{0}-\mathrm{i} \int_{0}^{t } S_{t -s} f(Y^n(s))\mathrm{d} s -{\rm{i}}\int_0^t\!\!\int_{B}S_{t -s}h(Y^n(s),z)(\psi_n(s,z)-1) \nu(\dif z)\dif s,\ \ t\in[0,T],
\end{align}
where $f(Y)= \lambda|Y|^{2 \sigma} Y $ and $h(Y,z)=\sum_{j=1}^m z_{j} g_{j}(Y)$ as before.  We need to show
\begin{align}\label{eq LDP aim1}
\lim_{n\rightarrow\infty}\|Y^{n}-Y^{0}\|_{E^{(T,p)}}=0.
\end{align}
By (\ref{prop-eq}), for any $t\in[0,T]$, (in the following the initial data will be used)
\begin{align}\label{eq LDP 0}
&Y^{n}(t)-Y^{0}(t)\nonumber\\
&=S_{t}\Big(Y^{n}(0)-Y^{0}(0)\Big)
-\mathrm{i} \int_{0}^{t } S_{t -s} \Big(f(Y^n(s))-f(Y^0(s))\Big)\mathrm{d} s \nonumber\\
&\hspace{0.5cm}-{\rm{i}}\int_0^t\!\!\int_{B}S_{t -s}\Big(h(Y^n(s),z)(\psi_n(s,z)-1)-h(Y^0(s),z)(\psi_0(s,z)-1)\Big) \nu(\dif z)\dif s\nonumber\\
&=S_{t}\Big(Y^{n}(0)-Y^{0}(0)\Big)
-\mathrm{i} \int_{0}^{t } S_{t -s} \Big(f(Y^n(s))-f(Y^0(s))\Big)\mathrm{d} s \nonumber\\
&\hspace{0.5cm}-{\rm{i}}\int_0^t\!\!\int_{B}S_{t -s}\Big(h(Y^n(s),z)-h(Y^0(s),z)\Big)(\psi_n(s,z)-1) \nu(\dif z)\dif s\nonumber\\
&\hspace{0.5cm}-{\rm{i}}\int_0^t\!\!\int_{B}S_{t -s}h(Y^0(s),z)\Big((\psi_n(s,z)-1)-(\psi_0(s,z)-1)\Big) \nu(\dif z)\dif s.
\end{align}
By (\ref{est uniform 1}), using arguments similar to that used to obtain (\ref{eq Up 03}), (\ref{eq Up 06}) and (\ref{eq Up 07}), there exists constant  $C$ dependent on $N,\lambda,\sigma,\|u_0\|_{L^2(\mathbb{R}^d)},m,L_1$ but independent on $n$ and $\widetilde{T}$, such that for any $\widetilde{T}\in[0,T]$,
\begin{align}\label{eq LDP 1}
&\Big\|\mathrm{i} \int_{0}^{\cdot } S_{\cdot -s} \Big(f(Y^n(s))-f(Y^0(s))\Big)\mathrm{d} s\Big\|_{E^{\widetilde{T},p}}\nonumber\\
&+
\Big\|{\rm{i}}\int_{0}^{\cdot } S_{\cdot -s} \Big(h(Y^n(s),z)-h(Y^0(s),z)\Big)(\psi_n(s,z)-1) \nu(\dif z)\dif s\Big\|_{E^{\widetilde{T},p}}\nonumber\\
&\leq
C\|Y^n-Y^0\|_{E^{\widetilde{T},p}}
    \Big(\widetilde{T}^{1-\frac{d\sigma}{2}}+\sup _{\hbar \in W^{N}}\int_0^{\widetilde{T}} \int_{B}|z||\hbar (s, z)-1|\nu(\dif z) \dif s\Big).
\end{align}
Applying the Strichartz inequality (\ref{Stri-est-1}), (\ref{eq LDP 0}) and (\ref{eq LDP 1}), we arrive at
\begin{align}\label{eq LDP 2}
&\|Y^{n}-Y^{0}\|_{E^{(\widetilde{T},p)}}\nonumber\\
&\leq
C\|Y^{n}(0)-Y^{0}(0)\|_{L^2(\mathbb{R}^d)}
+
C\|Y^n-Y^0\|_{E^{\widetilde{T},p}}
    \Big(\widetilde{T}^{1-\frac{d\sigma}{2}}+\sup _{\hbar \in W^{N}}\int_0^{\widetilde{T}} \int_{B}|z||\hbar (s, z)-1|\nu(\dif z) \dif s\Big)\nonumber\\
    &\hspace{0.5cm}+
   \Big{ \|}{\rm{i}}\int_0^\cdot\!\!\int_{B}S_{\cdot -s}h(Y^0(s),z)\Big((\psi_n(s,z)-1)-(\psi_0(s,z)-1)\Big) \nu(\dif z)\dif s\Big{\|}_{E^{(\widetilde{T},p)}}.
\end{align}
Applying Lemma \ref{lemma-est-ass}, there exists $T_0\in(0,T]$ such that
\begin{eqnarray}\label{Choosing T0}
C\Big(T_0^{1-\frac{d\sigma}{2}}+\sup _{\hbar \in W^{N}}\sup_{l\in[0,T]}\int_{l \wedge T}^{(l+T_0) \wedge T} \int_{B}|z||\hbar (s, z)-1|\nu(\dif z) \dif s\Big)
\leq 1/2.
\end{eqnarray}
Therefore, the above two inequalities imply that
\begin{align}\label{eq LDP 2-1}
\|Y^{n}-Y^{0}\|_{E^{(T_0,p)}}
&\leq
C\|Y^{n}(0)-Y^{0}(0)\|_{L^2(\mathbb{R}^d)}\\
    &\hspace{0.5cm}+
    C\|\int_0^\cdot\!\!\int_{B}S_{\cdot -s}h(Y^0(s),z)\Big((\psi_n(s,z)-1)-(\psi_0(s,z)-1)\Big) \nu(\dif z)\dif s\|_{E^{(T_0,p)}}.\nonumber
\end{align}
Consider the following equation $n=0,1,2,\cdots$ and $\mu>0$,
\begin{align}\label{eq LDP 3}
\Phi^{n}(t)= \mathrm{i} \int_0^t\!\!\int_{B}S_{t -s}h(Y^0(s),z)(\psi_n(s,z)-1) \nu(\dif z)\dif s,\ \ t\in[0,T];
\end{align}
\begin{align}\label{eq LDP 4}
\Phi^{n}_{\mu}(t)= \mathrm{i} \int_0^t\!\!\int_{B}S_{t -s}h_{\mu}(Y^0(s),z)(\psi_n(s,z)-1) \nu(\dif z)\dif s,\ \ t\in[0,T],
\end{align}
where  $h_{\mu}(Y,z)= \sum_{j=1}^mz_{j}  J_{\mu}   g_{j}(J_{\mu}Y)=J_{\mu}h(J_\mu Y,z)$ as Step 3 in the proof of Proposition \ref{prop 5}.

We will prove that for any $\epsilon>0$, there exists $\mu_\epsilon>0$ such that
\begin{align}\label{eq LDP 5}
\sup_{n=0,1,2...}\|\Phi^{n}_{\mu_\epsilon}-\Phi^{n}\|_{E^{(T,p)}}\leq \epsilon.
\end{align}

Using arguments similar to that used to prove (\ref{eq LDP1 pri 0}), (\ref{eq LDP1 pri 1}) and (\ref{eq LDP1 pri 2}), we have
\begin{align}\label{eq LDP 5-01}
&\|\Phi^{n}_{\mu}-\Phi^{n}\|_{E^{(T,p)}}\nonumber\\
&\leq
C \int_0^{T}\int_B \| h_{\mu}(Y^0(s)) -h(Y^0(s)) \|_{L^2(\mathbb{R}^d)}|\psi_n(s,z)-1|\nu(\mathrm{d} z) \mathrm{d} s\nonumber\\
&\leq
C \int_0^{T}\int_B \| J_\mu h(J_\mu Y^0(s)) -J_\mu h(Y^0(s)) \|_{L^2(\mathbb{R}^d)}|\psi_n(s,z)-1|\nu(\mathrm{d} z) \mathrm{d} s\nonumber\\
&\hspace{0.5cm}+
C \int_0^{T}\int_B \| (J_\mu-I) h(Y^0(s))\|_{L^2(\mathbb{R}^d)}|\psi_n(s,z)-1|\nu(\mathrm{d} z) \mathrm{d} s\nonumber\\
&\leq
C_m \int_0^{T}\int_B \|(J_\mu-I) Y^0(s)\|_{L^2(\mathbb{R}^d)}|z||\psi_n(s,z)-1|\nu(\mathrm{d} z) \mathrm{d} s\nonumber\\
&\hspace{0.5cm}+
C_m\int_0^{T}\int_B \| (J_\mu-I) g_j(Y^0(s))\|_{L^2(\mathbb{R}^d)}|z||\psi_n(s,z)-1|\nu(\mathrm{d} z) \mathrm{d} s.
\end{align}

By (\ref{J mu 01}), (\ref{J mu 03}), (\ref{est uniform 1}), Assumption \ref{assm-g}, and the dominated convergence theorem, we have
\begin{eqnarray}
&&\lim_{\mu\rightarrow\infty}\int_0^{T}\|(J_\mu-I) Y^0(s)\|_{L^2(\mathbb{R}^d)}ds=0,\label{eq LDP 5-02}\\
&&\lim_{\mu\rightarrow\infty}\sum_{j=1}^m\int_0^T \| (J_\mu-I) g_j(Y^0(s))\|_{L^2(\mathbb{R}^d)}ds=0.\label{eq LDP 5-03}
\end{eqnarray}
For any $\kappa>0$, set
$$
O_{\mu,\kappa}:=\{s\in[0,T]:\|(J_\mu-I) Y^0(s)\|_{L^2(\mathbb{R}^d)}>\kappa\}\text{ and }O_{\mu,\kappa}^c=[0,T]\setminus O_{\mu,\kappa}.
$$
Then (\ref{eq LDP 5-02}) implies that
\begin{eqnarray}\label{eq LDP 5-04}
\lim_{\mu\rightarrow\infty}{\rm Leb}_T(O_{\mu,\kappa})=0.
\end{eqnarray}
Hence, by (\ref{est uniform 1}) again and Lemma \ref{lemma-est-ass}, for any $\kappa>0$,
\begin{align}\label{eq LDP 5-05}
&\limsup_{\mu\rightarrow\infty}\sup_{n=0,1,2...}\int_0^{T}\int_B \|(J_\mu-I) Y^0(s)\|_{L^2(\mathbb{R}^d)}|z||\psi_n(s,z)-1|\nu(\mathrm{d} z) \mathrm{d} s\nonumber\\
&=
\limsup_{\mu\rightarrow\infty}\sup_{n=0,1,2...}\int_{O_{\mu,\kappa}^c}\int_B \|(J_\mu-I) Y^0(s)\|_{L^2(\mathbb{R}^d)}|z||\psi_n(s,z)-1|\nu(\mathrm{d} z) \mathrm{d} s\nonumber\\
&\hspace{0.5cm}+
\limsup_{\mu\rightarrow\infty}\sup_{n=0,1,2...}\int_{O_{\mu,\kappa}}\int_B \|(J_\mu-I) Y^0(s)\|_{L^2(\mathbb{R}^d)}|z||\psi_n(s,z)-1|\nu(\mathrm{d} z) \mathrm{d} s\nonumber\\
&\leq
\kappa\sup_{\hbar\in W^N}\int_0^T\int_B |z||\hbar(s,z)-1|\nu(\mathrm{d} z) \mathrm{d} s\nonumber\\
&\hspace{0.5cm}+
\limsup_{\mu\rightarrow\infty}\Big(2\sup_{s\in[0,T]}\|Y^0(s)\|_{L^2(\mathbb{R}^d)}\sup_{\hbar\in W^N}\int_{O_{\mu,\kappa}}\int_B |z||\hbar(s,z)-1|\nu(\mathrm{d} z) \mathrm{d} s\Big)\nonumber\\
&\leq C_N\kappa.
\end{align}
By (\ref{eq LDP 5-03}), a similar
argument as above shows that
\begin{eqnarray}\label{eq LDP 5-06}
\limsup_{\mu\rightarrow\infty}\sup_{n=0,1,2...}\sum_{j=1}^m\int_0^{T}\int_B\!\!\! \| (J_\mu-I) g_j(Y^0(s))\|_{L^2(\mathbb{R}^d)}|z||\psi_n(s,z)-1|\nu(\mathrm{d} z) \mathrm{d} s
\leq
C_N\kappa.
\end{eqnarray}
Combining (\ref{eq LDP 5-01}), (\ref{eq LDP 5-05}) and (\ref{eq LDP 5-06}), there exists a constant $C_N$ such that, for any $\kappa>0$,
\begin{align*}
\limsup_{\mu\rightarrow\infty}\sup_{n=0,1,2...}\|\Phi^{n}_{\mu}-\Phi^{n}\|_{E^{(T,p)}}\leq C_N\kappa,
\end{align*}
which implies (\ref{eq LDP 5}).

By (\ref{eq LDP 2-1}) and (\ref{eq LDP 5}), for any $\epsilon>0$ and $n=1,2,3...$,
\begin{eqnarray}\label{eq LDP 2-2}
\|Y^{n}-Y^{0}\|_{E^{(T_0,p)}}
\leq
C\|Y^{n}(0)-Y^{0}(0)\|_{L^2(\mathbb{R}^d)}+C\epsilon
    +
    C\|\Phi^{n}_{\mu_\epsilon}-\Phi^{0}_{\mu_\epsilon}\|_{E^{(T,p)}}.
\end{eqnarray}
Therefore, since $Y^{n}(0)-Y^{0}(0)=0$, once we show (to be proved later in Lemma \ref{Lemma-M-n-appro}) that for any fixed $\mu>0$
\begin{eqnarray}\label{eq LDP 6}
\lim_{n\rightarrow\infty}\|\Phi^{n}_{\mu}-\Phi^{0}_{\mu}\|_{E^{(T,p)}}=0,
\end{eqnarray}
then by the arbitrariness of $\epsilon$, we conclude
\begin{eqnarray}\label{eq LDP 2-3}
\lim_{n\rightarrow\infty}\|Y^{n}-Y^{0}\|_{E^{(T_0,p)}}=0.
\end{eqnarray}
Now consider $t\in[T_0,2T_0\wedge T]$,  we have
\begin{align*}
&Y^{n}(t)-Y^{0}(t)\nonumber\\
&=S_{t}\Big(Y^{n}(T_0)-Y^{0}(T_0)\Big)
-\mathrm{i} \int_{T_0}^{t } S_{t -s} \Big(f(Y^n(s))-f(Y^0(s))\Big)\mathrm{d} s \nonumber\\
&\hspace{0.5cm}-{\rm{i}}\int_{T_0}^t\!\!\int_{B}S_{t -s}\Big(h(Y^n(s),z)(\psi_n(s,z)-1)-h(Y^0(s),z)(\psi_0(s,z)-1)\Big) \nu(\dif z)\dif s.
\end{align*}
By the definition of $T_0$(see (\ref{Choosing T0})), using arguments similar to getting (\ref{eq LDP 2-2}), we can get,
for any $\epsilon>0$ and $n=1,2,3...$,
\begin{align*}
\|Y^{n}-Y^{0}\|_{E^{(T_0,2T_0\wedge T,p)}}
&\leq
C\|Y^{n}(T_0)-Y^{0}(T_0)\|_{L^2(\mathbb{R}^d)}+C\epsilon
    +
    C\|\Phi^{n}_{\mu_\epsilon}-\Phi^{0}_{\mu_\epsilon}\|_{E^{(T_0,2T_0\wedge T,p)}}\nonumber\\
&\leq
\|Y^{n}-Y^{0}\|_{E^{(T_0,p)}}
+
C\epsilon
+
\|\Phi^{n}_{\mu_\epsilon}-\Phi^{0}_{\mu_\epsilon}\|_{E^{(T,p)}}.
\end{align*}
Combining the above inequality with (\ref{eq LDP 6}), (\ref{eq LDP 2-3}) and the arbitrariness of $\epsilon$, we arrive at
\begin{eqnarray*}
\lim_{n\rightarrow\infty}\|Y^{n}-Y^{0}\|_{E^{(2T_0 \wedge T,p)}}=0.
\end{eqnarray*}
Then following a standard recursive procedure, we obtain
\begin{eqnarray*}
\lim_{n\rightarrow\infty}\|Y^{n}-Y^{0}\|_{E^{( T,p)}}=0.
\end{eqnarray*}
which verifies \eqref{Prop-LDP1-eq}.

To complete the proof of Proposition \ref{Yu-con}, in the rest of this subsection, we give the proof of \eqref{eq LDP 6} as we promised before.
 To do this, we present first a priori  results.

\begin{lemma}\label{lem-est-M-n-mu}
  Fix $\mu>0$. we have
  \begin{itemize}
    \item[(1)] For any $n=0,1,2,...$, $\Phi^{n}_{\mu}\in C([0,T];H^1(\mathbb{R}^d))$.

    \item[(2)]Since $\nabla$ and $S_t$ commute, for any $n=0,1,2,...$, set
  \begin{eqnarray*}
 \nabla \Phi_{\mu}^n(t)={\rm{i}} \int_0^t\!\!\int_{B}S_{t -s} \nabla  \big(\sum_{j=1}^m z_{j} J_{\mu} (g_{j}(J_{\mu}Y^{0}(s)))(\psi_n(s,z)-1\big) \nu(\dif z)\dif s,\ t\in[0,T],
\end{eqnarray*}
then $\nabla \Phi^{n}_{\mu}\in C([0,T];H^1(\mathbb{R}^d))$.

    \item[(3)] There exists a constant $C_{\mu,N}$ such that
    \begin{eqnarray*}
    \sup_{n=0,1,2,...}\Big(\sup_{t\in[0,T]}\|\Phi_{\mu}^n(t)\|_{H^1(\mathbb{R}^d)}+\sup_{t\in[0,T]}\|\nabla \Phi_{\mu}^n(t)\|_{H^1(\mathbb{R}^d)}\Big)\leq C_{\mu,N}<\infty.
    \end{eqnarray*}

    \item[(4)] Let $0=t_0^k<t_1^k<\cdots<t_{2^k}^k=T$, where $t_i^k=\frac{iT}{ 2^{k}}$, be a partition of $[0,T]$ and let $\bar{s}_{k}(s)=t_i^k$ for $s \in\left[t_i^k,t_{i+1}^{k}\right)$, $i=0,\cdots,2^k-1$. Then
            \begin{eqnarray}
 \lim_{k\rightarrow\infty}\sup_{n=0,1,2,...}
       \Big(
          \int_0^T \|\Phi_{\mu}^n(s)-\Phi_{\mu}^n(\bar{s}_k)\|^2_{L^2(\mathbb{R}^d)} \dif s
            +
          \int_0^T \|\nabla\Phi_{\mu}^n(s)-\nabla\Phi_{\mu}^n(\bar{s}_k)\|^2_{L^2(\mathbb{R}^d)} \dif s
       \Big)
          =0.\label{eq-M_n-2}
  \end{eqnarray}
  \end{itemize}
\end{lemma}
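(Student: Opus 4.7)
Parts (1)--(3) should reduce to applying Lemma \ref{lem-con-convo-1} (once to $\Phi^{n}_\mu$ itself for part (1), and once to $\nabla\Phi^{n}_\mu$ for part (2)) after verifying that the Duhamel integrand sits in $L^1(0,T;H^1)$, respectively in $L^1(0,T;H^2)$, uniformly in $n\in\mathbb{N}\cup\{0\}$. The key point is the smoothing built into $h_\mu$: since $J_\mu\in \mathcal{L}(L^2,H^2)$, Assumption \ref{assm-g} forces $g_j(0)=0$, and $g_j$ is globally Lipschitz with constant $L_1$, one has
\[
\|J_\mu g_j(J_\mu w)\|_{H^2}\le C(\mu)\,\|g_j(J_\mu w)\|_{L^2}\le C(\mu)L_1\|w\|_{L^2},\qquad w\in L^2(\mathbb{R}^d).
\]
Hence $\|h_\mu(Y^{0}(s),z)\|_{H^2}\le C_\mu |z|\,\|Y^{0}(s)\|_{L^2}$, and integrating against $|\psi_n(s,z)-1|\,\nu(\mathrm{d}z)\,\mathrm{d}s$ gives a uniform $L^1(0,T;H^2)$-bound thanks to the conservation $\|Y^{0}(s)\|_{L^2}=\|u_0\|_{L^2}$ from Proposition \ref{prop 5} and Lemma \ref{lemma-est-ass}(\ref{eq lemma-est-ass 02}).

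\textbf{Parts (1)--(3).} With this integrand in $L^1(0,T;H^1)$, Lemma \ref{lem-con-convo-1} yields $\Phi^{n}_\mu\in C([0,T];H^1)$ together with $\|\Phi^{n}_\mu\|_{L^\infty(0,T;H^1)}\le\int_0^T\|\int_B h_\mu(Y^{0},\cdot)(\psi_n-1)\nu(\mathrm{d}z)\|_{H^1}\mathrm{d}s$. Since $\nabla$ commutes with $S_\cdot$, the function $\nabla \Phi^{n}_\mu$ has the Duhamel form of Lemma \ref{lem-con-convo-1} with integrand $\nabla\!\int_B ih_\mu(Y^{0},z)(\psi_n-1)\nu(\mathrm{d}z)\in L^1(0,T;H^1)$ (using the $H^2$-bound above), which gives (2) and an analogous bound $\|\nabla\Phi^{n}_\mu\|_{L^\infty(0,T;H^1)}\le C_{\mu,N}$. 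Taking the supremum over $n$ uses precisely that $\sup_{\hbar\in W^N}\int_0^T\int_B|z||\hbar-1|\nu(\mathrm{d}z)\mathrm{d}s<\infty$ (Lemma \ref{lemma-est-ass}). This gives (3).

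\textbf{Part (4).} I plan to derive a \emph{uniform-in-$n$} modulus of continuity and then apply dominated convergence. For $0\le s\le t\le T$, the identity \eqref{lemna-eq-v2} of Lemma \ref{lem-con-convo-1} applied with $v=\Phi^{n}_\mu$ gives
\[
\|\Phi^{n}_\mu(t)-\Phi^{n}_\mu(s)\|_{L^2}^2
=2\mathrm{i}\!\int_s^t\!\!\big(\nabla\Phi^{n}_\mu(l),\nabla\Phi^{n}_\mu(s)\big)_{L^2}\!\mathrm{d}l
+2\!\int_s^t\!\!\big(\phi^n_\mu(l),\Phi^{n}_\mu(l)-\Phi^{n}_\mu(s)\big)_{L^2}\!\mathrm{d}l,
\]
where $\phi^n_\mu(l)=\mathrm{i}\int_B h_\mu(Y^{0}(l),z)(\psi_n(l,z)-1)\nu(\mathrm{d}z)$. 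Using the uniform bound from (3) on $\|\nabla\Phi^{n}_\mu\|_{L^\infty(0,T;L^2)}$ and $\|\Phi^{n}_\mu\|_{L^\infty(0,T;L^2)}$, together with $\|\phi^n_\mu(l)\|_{L^2}\le C_\mu\|u_0\|_{L^2}\int_B|z||\psi_n(l,z)-1|\nu(\mathrm{d}z)$, one obtains
\[
\|\Phi^{n}_\mu(t)-\Phi^{n}_\mu(s)\|_{L^2}^2\le C_{\mu,N}(t-s)+C_{\mu,N}\!\!\int_s^t\!\!\int_B|z||\psi_n(l,z)-1|\nu(\mathrm{d}z)\mathrm{d}l.
\]
By \eqref{Ineq SN} the right-hand side tends to $0$ as $t-s\to 0$ uniformly in $n$ and $\psi_n\in W^N$. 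The same argument applied with $v=\nabla\Phi^{n}_\mu$ and $\phi=\nabla\phi^n_\mu$ (the latter being controlled in $L^1(0,T;L^2)$ by the $H^2$-bound on the integrand established above) yields the corresponding uniform modulus of continuity for $\nabla\Phi^{n}_\mu$. Finally, since $|s-\bar s_k(s)|\le T/2^k$ for every $s$, the pointwise bound $\|\Phi^{n}_\mu(s)-\Phi^{n}_\mu(\bar s_k)\|_{L^2}^2\le\omega_{\mu,N}(T/2^k)$ (and similarly for $\nabla\Phi^{n}_\mu$) combined with a trivial integration over $[0,T]$ gives \eqref{eq-M_n-2}.

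\textbf{Main obstacle.} The only subtle point is guaranteeing that the modulus of continuity in the second term above is uniform in $n$; this rests entirely on Lemma \ref{lemma-est-ass} (specifically on the tightness-type statement \eqref{Ineq SN}), which is the reason the $W^N$-restriction is needed. Everything else is a routine application of Lemma \ref{lem-con-convo-1} combined with the quantitative smoothing $\|J_\mu w\|_{H^2}\le C(\mu)\|w\|_{L^2}$.
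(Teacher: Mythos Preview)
Your proposal is correct and follows essentially the same approach as the paper's own proof: establish the $L^1(0,T;H^2)$-bound on the Duhamel integrand via $\|J_\mu\|_{\mathcal{L}(L^2,H^2)}\le C_\mu$, Assumption~\ref{assm-g}, the conservation law, and Lemma~\ref{lemma-est-ass}, then apply Lemma~\ref{lem-con-convo-1} for (1)--(3) and the increment identity \eqref{lemna-eq-v2} together with \eqref{Ineq SN} for (4). One small wording slip: when you apply \eqref{lemna-eq-v2} to $v=\nabla\Phi^{n}_\mu$ you need $\nabla\phi^n_\mu\in L^1(0,T;H^1)$ (not merely $L^1(0,T;L^2)$), but this is exactly what your $H^2$-bound on $\phi^n_\mu$ gives.
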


\begin{proof}[Proof of Lemma \ref{lem-est-M-n-mu}]

  For $n=0,1,2,...$, set
\begin{eqnarray*}
\phi_{\mu}^n(s)=\int_{B}\big({\rm{i}}\sum_{j=1}^m z_{j}J_{\mu}  (g_{j}(J_{\mu}Y^{0}(s)))(\psi_n(s,z)-1\big) \nu(\dif z),\ s\in[0,T].
\end{eqnarray*}
Notice that, see e.g.,  \cite[Proposition 1.5.3]{Caz},
\begin{eqnarray}\label{eq J-mu 00}
\|J_{\mu}\|_{\mathcal{L}(L^2;H^2)}\leq C_\mu<\infty.
\end{eqnarray}
 By (\ref{eq lemma-est-ass 02}), (\ref{J mu 01}) and (\ref{est uniform 1}), using arguments similar to get (\ref{eq Up 04}), we deduce
\begin{align}\label{eq J-mu 01}
\sup_{n=0,1,2,...}\int_0^T\|\nabla \phi_{\mu}^n(s)\|_{H^1}ds
&\leq
\sup_{n=0,1,2,...}\int_0^T\|\phi_{\mu}^n(s)\|_{H^2}ds\nonumber\\
&\leq
\sup_{n=0,1,2,...}C_{\mu}\int_0^T\int_{B}\sum_{j=1}^m |z_{j}|\|g_{j}(J_{\mu}Y^{0}(s))\|_{L^2}|\psi_n(s,z)-1| \nu(\dif z)ds\nonumber\\
&\leq
 C_{m,\mu}\|Y^0\|_{L^\infty(0,T;L^2(\mathbb{R}^d))}\sup _{\hbar \in W^{N}}\int_0^{T} \int_{B}|z||\hbar (s, z)-1|\nu(\dif z) \dif s\nonumber\\
&\leq
C_{m,\mu,N}<\infty,
\end{align}
which implies that
\begin{eqnarray*}\label{eq Phi mu 00}
\text{for }n=0,1,2,...,\ \phi_{\mu}^n\in L^1(0,T;H^2(\mathbb{R}^d))\text{ and }\nabla \phi_{\mu}^n\in L^1(0,T;H^1(\mathbb{R}^d)).
\end{eqnarray*}
Applying Lemma \ref{lem-con-convo-1} leads us to statements (1) and (2) of this Lemma. Furthermore, in view of \eqref{lem-con-eq-2}, we have
  \begin{eqnarray}\label{lem-est-Phi-n-proof-1}
    \sup_{n=0,1,2,...}\Big(\sup_{t\in[0,T]}\|\Phi_{\mu}^n(t)\|_{H^1(\mathbb{R}^d)}+\sup_{t\in[0,T]}\|\nabla \Phi_{\mu}^n(t)\|_{H^1(\mathbb{R}^d)}\Big)
     \leq C_{m,\mu,N}<\infty,
    \end{eqnarray}
    which yields statement (3) of this Lemma.

On account of \eqref{lemna-eq-v2},  for any $0\leq s\leq t\leq T$, we have
\begin{align}\label{eq Phi time 01}
     &\|\Phi_{\mu}^n(t)-\Phi_{\mu}^n(s)\|^2_{L^2(\mathbb{R}^d)}\\
   &=
     2\mathrm{i}\int_s^t\Big(\nabla \Phi_{\mu}^n(l),\nabla \Phi_{\mu}^n(s)\Big)_{L^2(\mathbb{R}^d)}\dif l
     +
     2 \int_s^t \big(\phi_{\mu}^n(l),\Phi_{\mu}^n(l)-\Phi_{\mu}^n(s)\big)_{L^2(\mathbb{R}^d)} \dif l,\nonumber
\end{align}
and
\begin{align}\label{eq Phi time 02}
     &\|\nabla \Phi_{\mu}^n(t)-\nabla \Phi_{\mu}^n(s)\|^2_{L^2(\mathbb{R}^d)}\\
   &=
     2\mathrm{i}\int_s^t\Big(\nabla \big(\nabla \Phi_{\mu}^n(l)\big),\nabla \big(\nabla \Phi_{\mu}^n(s)\big)\Big)_{L^2(\mathbb{R}^d)}\dif l
     +
     2 \int_s^t \Big(\nabla\phi_{\mu}^n(l),\nabla \Phi_{\mu}^n(l)-\nabla \Phi_{\mu}^n(s)\Big)_{L^2(\mathbb{R}^d)} \dif l.\nonumber
\end{align}
Using \eqref{lem-est-Phi-n-proof-1},  with very similar arguments as the one in (\ref{eq J-mu 01}) we deduce
\begin{align*}
     &\|\Phi_{\mu}^n(t)-\Phi_{\mu}^n(s)\|^2_{L^2(\mathbb{R}^d)}
     +
     \|\nabla \Phi_{\mu}^n(t)-\nabla \Phi_{\mu}^n(s)\|^2_{L^2(\mathbb{R}^d)}\\
   &\leq
     C_{m,\mu,N}(t-s)
     +
     C_{m,\mu,N} \sup _{\hbar \in W^{N}}\int_s^{t} \int_{B}|z||\hbar (l, z)-1|\nu(\dif z) \dif l,
\end{align*}
and statement (4) of this lemma follows immediately from (\ref{eq lemma-est-ass 03}) (or (\ref{Ineq SN})).

\end{proof}

The following result plays a crucial role in proving \eqref{eq LDP 6}.
\begin{lemma}\label{lemma-D-U}
For every fixed $\mu>0$,
\begin{eqnarray*}
\lim_{n\rightarrow\infty}\sup_{t\in[0,T]}\|\Phi^{n}_{\mu}(t)-\Phi^{0}_{\mu}(t)\|_{H^1(\mathbb{R}^d)}=0.
\end{eqnarray*}

\end{lemma}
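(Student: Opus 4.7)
The strategy is to reduce, in two stages, the uniform $H^1$-convergence to the defining weak convergence of $\psi_n\to\psi_0$ in $W^N=M_{FC}([0,T]\times B)$ against continuous compactly supported scalar test functions. First, a pointwise equicontinuity-in-$t$ estimate reduces uniform convergence to pointwise convergence at each fixed $t$; second, a two-parameter cutoff/discretization argument reduces pointwise strong $L^2$-convergence to a finite sum of scalar weak-convergence statements, and the case of $\nabla \Phi^n_\mu$ is handled identically since $\nabla$ commutes with both $S_\tau$ and the integral.

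\textbf{Step 1 (uniform equicontinuity in $H^1$).} From the identities \eqref{eq Phi time 01}--\eqref{eq Phi time 02} and the uniform $H^1$-bound \eqref{lem-est-Phi-n-proof-1}, one obtains, for every $0\leq s\leq t\leq T$,
\[
\|\Phi^n_\mu(t)-\Phi^n_\mu(s)\|^2_{L^2}+\|\nabla\Phi^n_\mu(t)-\nabla\Phi^n_\mu(s)\|^2_{L^2}
\leq C_{m,\mu,N}(t-s)+C_{m,\mu,N}\sup_{\hbar\in W^N}\int_s^t\!\int_B |z||\hbar(l,z)-1|\nu(dz)dl.
\]
By \eqref{Ineq SN}, the right-hand side tends to $0$ uniformly in $n$ as $|t-s|\to 0$, so $\{\Phi^n_\mu\}_n$ is equicontinuous from $[0,T]$ to $H^1$. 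Consequently, it suffices to prove pointwise convergence $\|\Phi^n_\mu(t)-\Phi^0_\mu(t)\|_{H^1}\to 0$ at each fixed $t\in[0,T]$.

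\textbf{Step 2 (pointwise $L^2$-convergence via cutoff and discretization).} Setting $\xi_j(s):=J_\mu g_j(J_\mu Y^0(s))$, which lies in $C([0,T];H^2)$ (since $J_\mu\in\mathcal L(L^2,H^2)$, $g_j$ is Lipschitz on $L^2$, and $Y^0\in C([0,T];L^2)$), we have
\[
\Phi^n_\mu(t)-\Phi^0_\mu(t)=\mathrm{i}\sum_{j=1}^m\int_0^t\!\int_B z_j\, S_{t-s}\xi_j(s)(\psi_n-\psi_0)(s,z)\,\nu(dz)\,ds.
\]
Because $\nabla$ commutes with $S_\tau$ and with $\int$, and $\nabla\xi_j\in C([0,T];H^1)\subset C([0,T];L^2)$, the same argument applied to $\nabla\Phi^n_\mu-\nabla\Phi^0_\mu$ (replacing $\xi_j$ by $\nabla\xi_j$) yields the companion statement. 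Fix $\varepsilon>0$. (i) By \eqref{eq zhai 1}, pick $\delta>0$ so small that $\sup_{\psi\in W^N}\int_0^T\!\int_{|z|<\delta}|z||\psi-1|\nu(dz)ds<\varepsilon$; then the $|z|<\delta$ contribution to the difference has $L^2$-norm bounded by $C_{m,L_1,\|u_0\|_{L^2}}\varepsilon$ uniformly in $n$. (ii) Using uniform continuity of $s\mapsto S_{t-s}\xi_j(s)$ from $[0,T]$ to $L^2$ (continuity of $\xi_j$ plus strong continuity of the unitary group on the compact set $\xi_j([0,T])$), choose a partition $0=s_0<\cdots<s_K=T$ so fine that $\|S_{t-s}\xi_j(s)-S_{t-s_l}\xi_j(s_l)\|_{L^2}<\varepsilon$ on each $[s_l,s_{l+1})$; replacing $S_{t-s}\xi_j(s)$ by the left-endpoint value gives an approximation error bounded by $C\varepsilon\sup_{\hbar\in W^N}\int_0^T\!\int_B|z||\hbar-1|\nu\,ds$, which is finite by \eqref{eq lemma-est-ass 02}.

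\textbf{Step 3 (finite scalar weak convergence).} After (i)--(ii) the remaining finite sum takes the form
\[
\mathrm{i}\sum_{l,j}S_{t-s_l}\xi_j(s_l)\cdot c^n_{l,j},\qquad
c^n_{l,j}:=\int_{s_l}^{s_{l+1}\wedge t}\!\int_{|z|\geq\delta} z_j(\psi_n-\psi_0)(s,z)\,\nu(dz)\,ds,
\]
with fixed $L^2$-vectors $S_{t-s_l}\xi_j(s_l)$. Each scalar $c^n_{l,j}\to 0$ because the integrand $\mathbf 1_{[s_l,s_{l+1}\wedge t]}(s)\mathbf 1_{|z|\geq\delta}(z)z_j$ can be approximated, up to an error that is uniformly small in $n$, by functions in $C_c([0,T]\times B)$: the endpoint jumps of the time-indicator are smoothed with error controlled by \eqref{Ineq SN}, and the threshold $|z|=\delta$ is smoothed (the error is controlled by the tail estimate \eqref{eq zhai 1} applied to a slightly smaller $\delta'$). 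Since $\psi_n\to\psi_0$ in $W^N$ is by definition convergence in $M_{FC}([0,T]\times B)$ tested against $C_c([0,T]\times B)$, the approximating scalar integrals converge to $0$. The finite sum therefore tends to $0$ in $L^2$, and combining with the estimates of (i)--(ii) gives $\limsup_n\|\Phi^n_\mu(t)-\Phi^0_\mu(t)\|_{L^2}\leq C\varepsilon$. Arbitrariness of $\varepsilon$ yields pointwise $L^2$-convergence; applying the same argument with $\nabla\xi_j$ in place of $\xi_j$ yields pointwise $L^2$-convergence of $\nabla\Phi^n_\mu(t)$, hence pointwise $H^1$-convergence. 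Together with Step 1 this gives the claimed uniform convergence.

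The main obstacle is the final scalar-convergence step: the natural integrands $\mathbf 1_{[s_l,s_{l+1}\wedge t]}(s)\mathbf 1_{|z|\geq\delta}(z)z_j$ are neither continuous (time-indicator jumps) nor compactly supported in $B$ (the distinguished "point" $z=0$ is not in $B$). Both difficulties are resolved precisely by the uniform estimates \eqref{Ineq SN} and \eqref{eq zhai 1} already available; once this reduction is performed, the statement becomes a direct consequence of the definition of the $W^N$-topology.
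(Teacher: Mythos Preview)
Your argument is correct and takes a genuinely different, more elementary route than the paper's.

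The paper starts from the energy identity \eqref{lem-con-eq-2} for the difference $\nabla\Phi^n_\mu-\nabla\Phi^0_\mu$, which produces an integral whose test function is the unknown $\nabla\Phi^n_\mu-\nabla\Phi^0_\mu$ itself and therefore depends on $n$. After the $|z|<\delta$ cutoff and a time discretization, this forces a weak-compactness step (extracting a weak $L^2$ limit of $\nabla\Phi^n_\mu(t^k_i)-\nabla\Phi^0_\mu(t^k_i)$) together with the relative-entropy estimate \eqref{eq entropy} and \cite[Lemma 2.8]{BD 1998} to pass to the limit with $n$-dependent integrands against $\psi_n$. Your decomposition avoids this entirely: by first invoking equicontinuity (already contained in Lemma~\ref{lem-est-M-n-mu}) to reduce to a fixed $t$, and then freezing the $L^2$-valued integrand $s\mapsto S_{t-s}\xi_j(s)$ at partition points, the residual scalar coefficients $c^n_{l,j}$ are integrals of \emph{$n$-independent} bounded functions against $(\psi_n-\psi_0)$. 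Their convergence to $0$ then follows directly from the definition of the $W^N$-topology once the indicators are mollified to $C_c([0,T]\times B)$, and you correctly note that the mollification errors are uniform in $n$ by \eqref{Ineq SN} and \eqref{eq zhai 1}. The trade-off is that your approach relies on the uniform continuity of $s\mapsto S_{t-s}\xi_j(s)$ in $L^2$, which is available here precisely because of the regularizing effect of $J_\mu$; the paper's energy-identity route would be more robust if $\xi_j$ were only in $L^1(0,T;L^2)$.
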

\begin{proof}[Proof of Lemma \ref{lemma-D-U}]
Now fix $\mu>0$.
It is enough to prove that
\begin{eqnarray}
\lim_{n\rightarrow\infty}\sup_{t\in[0,T]}\|\nabla\Phi^{n}_{\mu}(t)-\nabla\Phi^{0}_{\mu}(t)\|_{L^2(\mathbb{R}^d)}=0,\label{eq H1 01}
\end{eqnarray}
as the following convergnce
\begin{eqnarray}
\lim_{n\rightarrow\infty}\sup_{t\in[0,T]}\|\Phi^{n}_{\mu}(t)-\Phi^{0}_{\mu}(t)\|_{L^2(\mathbb{R}^d)}=0\label{eq H1 02}
\end{eqnarray}
follows similarly.  Clearly, combining (\ref{eq H1 01}) and (\ref{eq H1 02}) implies this lemma.

\vskip 0.2cm
Now let us prove (\ref{eq H1 01}). By Lemmas \ref{lem-con-convo-1}, we have 
\begin{align}\label{eq App 01}
    &\|\nabla\Phi^{n}_{\mu}(t)-\nabla\Phi^{0}_{\mu}(t)\|^2_{L^2(\mathbb{R}^d)}\nonumber\\
&=
   \int_0^t\int_{B} \Big(  \sum_{j=1}^m{\rm{i}} z_{j} \nabla J_{\mu} (g_{j}(J_{\mu}Y^{0}(s))(\psi_n(s,z)-\psi_0(s,z)), \nabla\Phi^{n}_{\mu}(s)-\nabla\Phi^{0}_{\mu}(s)\Big)_{L^2(\mathbb{R}^d)}\nu(\mathrm{d} z) \dif s\nonumber\\
&=
   \int_0^t\int_{0<|z|<\delta} \Big(  \sum_{j=1}^m{\rm{i}} z_{j} \nabla J_{\mu} (g_{j}(J_{\mu}Y^{0}(s))(\psi_n(s,z)-\psi_0(s,z)), \nabla\Phi^{n}_{\mu}(s)-\nabla\Phi^{0}_{\mu}(s)\Big)_{L^2(\mathbb{R}^d)}\nu(\mathrm{d} z) \dif s\nonumber\\
&\hspace{0.5cm}+
   \int_0^t\int_{\delta\leq |z|\leq 1} \Big(  \sum_{j=1}^m{\rm{i}} z_{j} \nabla J_{\mu} (g_{j}(J_{\mu}Y^{0}(s))(\psi_n(s,z)-\psi_0(s,z)), \nabla\Phi^{n}_{\mu}(s)-\nabla\Phi^{0}_{\mu}(s)\Big)_{L^2(\mathbb{R}^d)}\nu(\mathrm{d} z) \dif s\nonumber\\
&:=I^\delta_{n, 1}(t)+I^\delta_{n, 2}(t).
\end{align}
Here $\delta\in (0,1]$ is an arbitrary constant.

Using arguments similar to those that led up to \eqref{eq Up 04} and \eqref{eq J-mu 01},  recalling Assumption \ref{assm-g}, and applying the Cauchy-Schwartz inequality and Lemma \ref{lem-est-M-n-mu}, we deduce
\begin{eqnarray}\label{I-n-2}
&&\hspace{-0.7cm}\sup_{t\in[0,T]}|I^\delta_{n, 1}(t)|\nonumber\\
&&\hspace{-0.7cm}\leq C_{\mu} L_1 m^{\frac12} \int_{0}^{T}\int_{0<|z|<\delta} |z|\|Y^{0}(s)\|_{L^2(\mathbb{R}^d)}|\psi_n(s,z)-\psi_0(s,z)|\|\nabla\Phi^{n}_{\mu}(s)-\nabla\Phi^{0}_{\mu}(s)\|_{L^2(\mathbb{R}^d)}\nu(\dif z)\dif s\nonumber\\
&&\hspace{-0.7cm}\leq C_{m,\mu,N}\Big[ \int_{0}^{T} \int_{0<|z|<\delta} |z||\psi_n(s,z)-1| \nu(\dif z) \dif s
     +
     \int_{0}^{T} \int_{0<|z|<\delta} |z||\psi_0(s,z)-1| \nu(\dif z) \dif s \Big]\nonumber\\
&&\hspace{-0.7cm}\leq C_{m,\mu,N}\sup_{h\in W^N}\int_0^T\int_{0<|z|<\delta}|z||h(s,z)-1|\nu(dz)ds.
\end{eqnarray}
Here the constant $C_\mu$ appears in (\ref{eq J-mu 00}).

To estimate the second term $I^\delta_{n, 2}$, fix $k\in\mathbb{N}$, let $0=t_0^k<t_1^k<\cdots<t_{2^k}^k=T$, where $t_i^k=\frac{iT}{ 2^{k}}$, be a partition of $[0,T]$ and set $\bar{s}_{k}(s)=t_i^k$ for $s \in\left[t_i^k,t_{i+1}^{k}\right)$, $i=0,\cdots,2^k-1$. We find
\begin{eqnarray}\label{I-n-1}
        &&\hspace{-0.7cm} \sup_{0\leq t\leq T} |I^\delta_{n, 2}(t)|\nonumber\\
          &&\hspace{-0.7cm}
          \leq
          \sup _{0 \leq t \leq T}\Big|\int_{0}^{t} \int_{\delta\leq |z|\leq 1}\Big(  \sum_{j=1}^m{\rm{i}}z_{j} \nabla J_{\mu} (g_{j}(J_{\mu}Y^{0}(s))(\psi_n(s,z)-\psi_0(s,z)), \nonumber\\
           &&\hspace{4cm}\big(\nabla\Phi^{n}_{\mu}(s)-\nabla\Phi^{0}_{\mu}(s)\big)
             -
             \big(\nabla\Phi^{n}_{\mu}(\bar{s}_k)-\nabla\Phi^{0}_{\mu}(\bar{s}_k)\big)\Big)_{L^2(\mathbb{R}^d)}\nu(\mathrm{d} z)\mathrm{d} s\Big|\nonumber\\
          &&\hspace{-0.4cm}+  \sup _{0 \leqslant t \leqslant T}\Big|\int_{0}^{t} \int_{\delta\leq |z|\leq 1} \Big(  \sum_{j=1}^m{\rm{i}}z_{j} \nabla J_{\mu} (g_{j}(J_{\mu}Y^{0}(s))- \sum_{j=1}^m{\rm{i}}z_{j} \nabla J_{\mu} (g_{j}(J_{\mu}Y^{0}(\bar{s}_k)),
          \nonumber\\&&\hspace{4.5cm}
          \nabla\Phi^{n}_{\mu}(\bar{s}_k)-\nabla\Phi^{0}_{\mu}(\bar{s}_k)\Big)_{L^2(\mathbb{R}^d)}
                \cdot(\psi_n(s,z)-\psi_0(s,z))\nu(\mathrm{d} z)\mathrm{d}s\Big|\nonumber\\
          &&\hspace{-0.4cm}+\sup _{1 \leqslant i \leqslant 2^{k}} \sup_{t_{i-1}^k \leqslant t \leqslant t_{i}^k}\Big|\int_{t_{i-1}^k}^{t} \int_{\delta\leq |z|\leq 1}\Big(   \sum_{j=1}^m{\rm{i}}z_{j} \nabla J_{\mu} (g_{j}(J_{\mu}Y^{0}(\bar{s}_k)),\nabla\Phi^{n}_{\mu}(\bar{s}_k)-\nabla\Phi^{0}_{\mu}(\bar{s}_k)\Big)_{L^2(\mathbb{R}^d)}\nonumber\\
          &&\hspace{9cm}\cdot(\psi_n(s,z)-\psi_0(s,z))\nu(\mathrm{d} z)\mathrm{d} s\Big|\nonumber\\
          &&\hspace{-0.4cm}+\sum_{i=1}^{2^{k}} \Big|\int_{t_{i-1}^k}^{t_i^k} \int_{\delta\leq |z|\leq 1}\Big(   \sum_{j=1}^m{\rm{i}}z_{j} \nabla J_{\mu} (g_{j}(J_{\mu}Y^{0}(\bar{s}_k)),\nabla\Phi^{n}_{\mu}(\bar{s}_k)-\nabla\Phi^{0}_{\mu}(\bar{s}_k)\Big)_{L^2(\mathbb{R}^d)}\nonumber\\
          &&\hspace{9cm}(\psi_n(s,z)-\psi_0(s,z))\nu(\mathrm{d} z)\mathrm{d} s\Big|\nonumber\\
                   &&\hspace{-0.7cm}:=\tilde{I}^{\delta,n,k}_{1}+\tilde{I}^{\delta,n,k}_{2}+\tilde{I}^{\delta,n,k}_{3}+\tilde{I}^{\delta,n,k}_{4}.
\end{eqnarray}

For the term $\tilde{I}^{\delta,n,k}_{1}$, using similar argument with which we derived \eqref{I-n-2} and then applying the inequality
\begin{equation}
a b \leq e^{K a}+\frac{1}{K}(b \log b-b+1)=e^{K a}+\frac{1}{K} l(b), \quad \text { for any } a, b \in(0, \infty), K \in[1, \infty),
\end{equation}
we infer, for any $K\in[1, \infty)$,
\begin{eqnarray}\label{I-n-1-1}
&&\hspace{-0.5cm}\tilde{I}^{\delta,n,k}_{1}\nonumber\\
&&\hspace{-0.5cm}\leq C_{m,\mu,N}\int_0^T \int_{\delta\leq |z|\leq 1} \big(|\psi_n(s,z)|+|\psi_0(s,z)|\big)\|\nabla\Phi^{n}_{\mu}(s)-\nabla\Phi^{n}_{\mu}(\bar{s}_k)\|_{L^2(\mathbb{R}^d)}\nu(dz)ds\nonumber\\
&&+
C_{m,\mu,N}\int_0^T \int_{\delta\leq |z|\leq 1} \big(|\psi_n(s,z)|+|\psi_0(s,z)|\big)\|\nabla\Phi^{0}_{\mu}(s)-\nabla\Phi^{0}_{\mu}(\bar{s}_k)\|_{L^2(\mathbb{R}^d)}\nu(dz)ds\nonumber\\
&&\hspace{-0.5cm}\leq C_{m,\mu,N}\int_0^T \int_{\delta\leq |z|\leq 1}\|\nabla\Phi^{n}_{\mu}(s)-\nabla\Phi^{n}_{\mu}(\bar{s}_k)\|_{L^2(\mathbb{R}^d)}
\left[2 e^K+\frac{l\left(\psi_{n}(s, z)\right)+l(\psi_0(s, z))}{K}\right] \nu(dz) d s\nonumber\\
&&+
C_{m,\mu,N}\int_0^T \int_{\delta\leq |z|\leq 1}\|\nabla\Phi^{0}_{\mu}(s)-\nabla\Phi^{0}_{\mu}(\bar{s}_k)\|_{L^2(\mathbb{R}^d)}
\left[ 2e^K+ \frac{l\left(\psi_{n}(s, z)\right)+ l(\psi_0(s, z))}{K}\right] \nu(dz) d s\nonumber\\
&&\hspace{-0.5cm}\leq e^KC_{m,\mu,N}\nu(\delta\leq |z|\leq 1)
\Big(
\int_0^T \|\nabla\Phi^{n}_{\mu}(s)-\nabla\Phi^{n}_{\mu}(\bar{s}_k)\|_{L^2(\mathbb{R}^d)}d s
+
\int_0^T\|\nabla\Phi^{0}_{\mu}(s)-\nabla\Phi^{0}_{\mu}(\bar{s}_k)\|_{L^2(\mathbb{R}^d)} d s
\Big)\nonumber\\
&&
+\frac{C_{m,\mu,N}\sup_{j=0,1,2...}\sup_{t\in[0,T]}\|\nabla\Phi^{j}_{\mu}(t)\|_{L^2(\mathbb{R}^d)}}{K}\nonumber\\
&&\hspace{-0.5cm}\leq e^KT^{\frac12}C_{m,\mu,N}\nu(\delta\leq |z|\leq 1)\sup_{j=0,1,2,...}\Big(\int_0^T \|\nabla\Phi^{j}_{\mu}(s)-\nabla\Phi^{j}_{\mu}(\bar{s}_k)\|^2_{L^2(\mathbb{R}^d)}d s\Big)^{\frac12}
+
\frac{C_{m,\mu,N}}{K},
\end{eqnarray}
where we haved used the fact that $\delta\leq |z|\leq 1$ and $\psi_n,\psi\in W^N$ in the third inequality.

Similarly, we have
\begin{align}\label{I-n-1-2}
\tilde{I}^{\delta,n,k}_{2}&\leq C_{m,\mu,N}\sup_{j=0,1,2...}\sup_{t\in[0,T]}\|\nabla\Phi^{j}_{\mu}(t)\|_{L^2(\mathbb{R}^d)}\nonumber\\
&\hspace{1cm}\times\int_0^T  \int_{\delta\leq |z|\leq 1} \|Y^{0}(s)-Y^{0}(\bar{s}_k)\|_{L^2(\mathbb{R}^d)}\left[2 e^K+\frac{l\left(\psi_{n}(s, z)\right)+l(\psi(s, z))}{K}\right] \nu(dz) d s\nonumber\\
&\leq e^KTC_{m,\mu,N}\nu(\delta\leq |z|\leq 1)\sup_{j=0,1,2...}\sup_{t\in[0,T]}\|\nabla\Phi^{j}_{\mu}(t)\|_{L^2(\mathbb{R}^d)}\sup_{s\in[0,T]}\|Y^{0}(s)-Y^{0}(\bar{s}_k)\|_{L^2(\mathbb{R}^d)}\nonumber\\
&\hspace{0.5cm}+\frac{C_{m,\mu,N}}{K},\nonumber\\
&\leq e^KTC_{m,\mu,N}\nu(\delta\leq |z|\leq 1)\sup_{j=0,1,2...}\sup_{s\in[0,T]}\|Y^{0}(s)-Y^{0}(\bar{s}_k)\|_{L^2(\mathbb{R}^d)}
+\frac{C_{m,\mu,N}}{K},
\end{align}
where we have used statement (3) of Lemma \ref{lem-est-M-n-mu} and \eqref{est uniform 1}.

We again follow the same line of argument as used in deducing  \eqref{I-n-2}, to find that
\begin{align}\label{I-n-1-3}
\tilde{I}^{\delta,n,k}_{3}
&\leq C_{m,\mu,N} \sup_{h\in W^N} \sup _{1 \leqslant i \leqslant 2^{k}}\int_{t_{i-1}}^{t_i} \int_{B}|z| |h(s,z)-1|\nu(dz)ds.
\end{align}
Now consider $\tilde{I}^{\delta,n,k}_{4}$. We first fix $k\in\mathbb{N}$ and $i=0,\cdots,2^k-1$. Due to the statement (3) of Lemma \ref{lem-est-M-n-mu}, it follows that
\begin{eqnarray}\label{eq star}
\sup_{n\in\mathbb{N}}\|\nabla\Phi^{n}_{\mu}(t_i^k)-\nabla\Phi^{0}_{\mu}(t_i^k)\|_{L^2(\mathbb{R}^d)}
\leq C_{\mu,N}<\infty,
\end{eqnarray}
which implies that
 $\{\nabla\Phi^{n}_{\mu}(t_i^k)-\nabla\Phi^{0}_{\mu}(t_i^k)\}_{n\in\mathbb{N}}$ is weakly tight in $L^2(\mathbb{R}^d)$, and hence there exists a subsequence (still denoted by itself) and $U_i^k\in L^2(\mathbb{R}^d)$ such that, for any $v\in L^2(\mathbb{R}^d)$,
$$
 \lim_{n\rightarrow\infty} \big( v, \nabla\Phi^{n}_{\mu}(t_i^k)-\nabla\Phi^{0}_{\mu}(t_i^k) \big)_{L^2(\mathbb{R}^d)}=( v, U_i^k)_{L^2(\mathbb{R}^d)}.
$$
Since $J_{\mu}$ is a contraction of $L^2(\mathbb{R}^d)$ and $\|J_{\mu}\|_{\mathcal{L}(L^2;H^2)}\leq C_\mu$,  by Assumption \ref{assm-g} and Corollary \ref{coro-uni-bou} we infer
\begin{eqnarray}\label{eq star}
\!\!\!\sup_{\delta\leq |z|\leq 1}\!\|\sum_{j=1}^m z_{j} \nabla J_{\mu} (g_{j}(J_{\mu}Y^{0}(t_i^k))\|_{L^2(\mathbb{R}^d)}\leq C_{m,\mu}\max_{j}\|g_j(J_{\mu}(Y^0(t_i^k)))\|_{L^2(\mathbb{R}^d)}
\leq C_{m,\mu,N}<\infty,
\end{eqnarray}
which yields
\begin{eqnarray}\label{eq App Gujia 01}
 \lim_{n\rightarrow\infty} V^n_{k,i}(z)
 =
 ( \sum_{j=1}^m{\rm{i}} z_{j} \nabla J_{\mu} (g_{j}(J_{\mu}Y^{0}(t_i^k)), U_i^k)_{L^2(\mathbb{R}^d)}:=V_{k,i}(z),\ \ \forall z\in \{\delta\leq |z|\leq 1\},
\end{eqnarray}
and
\begin{eqnarray}\label{eq App Gujia 02}
 \sup_{n\in\mathbb{N}}\sup_{\delta\leq |z|\leq 1} \Big|V^n_{k,i}(z)\Big|
 \leq C_{m,\mu,N}<\infty.
\end{eqnarray}
Here
$$
V^n_{k,i}(z):=\Big(   \sum_{j=1}^m{\rm{i}} z_{j} \nabla J_{\mu} (g_{j}(J_{\mu}Y^{0}(t_i^k)) ,\nabla\Phi^{n}_{\mu}(t_i^k)-\nabla\Phi^{0}_{\mu}(t_i^k)\Big)_{L^2(\mathbb{R}^d)}.
$$
Now, we may follow proofs similar in spirit to \cite[A.6]{Budhiraja-Chen-Dupuis} and assume without loss of generality that
$$
m_{n, \delta}:=\int_{t_{i}^k}^{t_{i+1}^k} \int_{\delta\leq |z|\leq 1} \psi_{n}(s, z) \nu(d z) d s \neq 0, \text { and } m_{\delta}:=\int_{t_{i}^k}^{t_{i+1}^k} \int_{\delta\leq |z|\leq 1} \psi(s, z) \nu(d z) d s \neq 0.
$$
Define probability measures $\tilde{\nu}_{n, \delta}, \tilde{\nu}_{\delta}$ and $\theta_{\delta}$ on
 $$\left([t_{i}^k, t_{i+1}^k) \times \{\delta\leq |z|\leq 1\}, \mathcal{B}\left([t_{i}^k, t_{i+1}^k)\right) \otimes \mathcal{B}\left(\{\delta\leq |z|\leq 1\}\right)\right)$$ as follows:
\begin{align*}
\tilde{\nu}_{n, \delta}(\cdot) &=\frac{ \nu_{T}^{\psi_{n}}\left(\cdot \cap([t_{i}^k, t_{i+1}^k) \times \{\delta\leq |z|\leq 1\})\right)}{m_{n, \delta}}, \\
\tilde{\nu}_{\delta}(\cdot) &=\frac{ \nu_{T}^{\psi}\left(\cdot \cap\left([t_{i}^k, t_{i+1}^k) \times \{\delta\leq |z|\leq 1\}\right)\right) }{m_{\delta}},\\
\theta_{\delta}(\cdot) &=\frac{\nu_{T}\left(\cdot \cap\left([t_{i}^k, t_{i+1}^k)\times \{\delta\leq |z|\leq 1\}\right)\right)}{\left.\nu_{T}\left([t_{i}^k, t_{i+1}^k) \times \{\delta\leq |z|\leq 1\}\right)\right)},
\end{align*}
where we employ the notations $\nu_T$ and $\nu^g_T$ introduced in (\ref{eq.corres-func-meas}).

 Denote
$A^{\delta}:= [t_{i}^k, t_{i+1}^k) \times \{\delta\leq |z|\leq 1\}$ for simplicity. In 
 the following, we will prove (\ref{prop-2-proof-I-4}). To do this, 
using similar arguments as in the proof of \cite[Lemma 10.9]{BD-19}, we may assume without loss of generality  that ${\rm Leb}_T\otimes\nu\big{(}\partial A^{\delta}\big{)} =0$. Recall the topology $W^N$ defined in \eqref{S_N}. 
Since $\psi_{n}\rightarrow \psi$ as $n\rightarrow\infty$ in $W^N$ and ${\rm Leb}_T \otimes\nu\big{(}\partial A^{\delta}\big{)} =0$, we infer
\begin{eqnarray}\label{eq App Gujia 03}
\lim _{n \rightarrow \infty} m_{n, \delta}=m_{\delta},
\end{eqnarray}
and
\begin{eqnarray}\label{eq App Gujia 04}
\tilde{\nu}_{n, \delta}\text{  converges weakly to } \tilde{\nu}_{\delta}\text{ as }n \rightarrow \infty.
\end{eqnarray}
Moreover, there exists a constant $\alpha_{\delta}$ such that 
\begin{align}\label{eq entropy}
 \sup _{n \geq 1} R\left(\tilde{\nu}_{n, \delta} \| \theta_{\delta}\right)=& \sup _{n \geq 1} \int_{t_{i}^k}^{ t_{i+1}^k} \int_{\delta\leq |z|\leq 1} \log \left(\frac{\nu_{T}\left([t_{i}^k, t_{i+1}^k) \times \{\delta\leq |z|\leq 1\}\right)}{m_{n, \delta}} \psi_{n}(s, z)\right) \frac{1}{m_{n, \delta}} \psi_{n}(s, z) \nu(d z) d s \nonumber\\
 =& \sup _{n \geq 1} \frac{1}{m_{n, \delta}} \int_{t_{i}^k}^{ t_{i+1}^k} \int_{\delta\leq |z|\leq 1}\left(l\left(\psi_{n}(s, z)\right)+\psi_{n}(s, z)-1\right) \nu(d z) d s \nonumber\\ &+\log \frac{\nu_{T}\left(\left[t_{i}^k, t_{i+1}^k\right) \times \{\delta\leq |z|\leq 1\}\right)}{m_{n, \delta}} \nonumber\\
  \leq & \sup _{n \geq 1}\left(\frac{N}{m_{n, \delta}}+1-\frac{\nu_{T}\left([t_{i}^k, t_{i+1}^k) \times \{\delta\leq |z|\leq 1\}\right)}{m_{n, \delta}}+\log \frac{\nu_{T}\left([t_{i}^k, t_{i+1}^k) \times \{\delta\leq |z|\leq 1\}\right)}{m_{n, \delta}}\right) \nonumber\\ \leq & \alpha_{\delta}<\infty .
 \end{align}
Based on (\ref{eq App Gujia 01})--(\ref{eq entropy}) and using \cite[Lemma 2.8]{BD 1998}, we are able to deduce that
\begin{align*}
&\lim _{n \rightarrow \infty} \int_{t_{i}^k}^{ t_{i+1}^k} \int_{\delta\leq |z|\leq 1} V^n_{k,i}(s, z) \tilde{\nu}_{n, \delta}(d z d s)=\int_{t_{i}^k}^{ t_{i+1}^k} \int_{\delta\leq |z|\leq 1} V_{k,i}(s, z) \tilde{\nu}_{\delta}(d z d s),\\
&\lim _{n \rightarrow \infty} \int_{t_{i}^k}^{ t_{i+1}^k} \int_{\delta\leq |z|\leq 1} V^n_{k,i}(s, z) \tilde{\nu}_{\delta}(d z d s)=\int_{t_{i}^k}^{ t_{i+1}^k} \int_{\delta\leq |z|\leq 1} V_{k,i}(s, z) \tilde{\nu}_{\delta}(d z d s).
\end{align*}
It follows that
\begin{align*}
&\lim _{n \rightarrow \infty} \int_{t_{i}^k}^{ t_{i+1}^k} \int_{\delta\leq |z|\leq 1} V^n_{k,i}(s, z) \psi_n(s,z) \nu(\mathrm{d} z)\mathrm{d} s=\int_{t_{i}^k}^{ t_{i+1}^k} \int_{\delta\leq |z|\leq 1} V_{k,i}(s, z) \psi(s,z) \nu(\mathrm{d} z)\mathrm{d} s,\\
&\lim _{n \rightarrow \infty} \int_{t_{i}^k}^{ t_{i+1}^k} \int_{\delta\leq |z|\leq 1} V^n_{k,i}(s, z) \psi(s,z) \nu(\mathrm{d} z)\mathrm{d} s=\int_{t_{i}^k}^{ t_{i+1}^k} \int_{\delta\leq |z|\leq 1} V_{k,i}(s, z) \psi(s,z) \nu(\mathrm{d} z)\mathrm{d} s.
\end{align*}
Hence
\begin{align}
\begin{split}\label{prop-2-proof-I-4}
      &\lim_{n\rightarrow\infty}  \Big|\int_{t_{i}^k}^{t_{i+1}^k} \int_{\delta\leq |z|\leq 1}\Big(   \sum_{j=1}^m{\rm{i}}z_{j} \nabla J_{\mu} (g_{j}(J_{\mu}Y^{0}(\bar{s}_k)),\nabla\Phi^{n}_{\mu}(\bar{s}_k)-\nabla\Phi^{0}_{\mu}(\bar{s}_k)\Big)_{L^2(\mathbb{R}^d)}\\
          &\hspace{9cm}(\psi_n(s,z)-\psi_0(s,z))\nu(\mathrm{d} z)\mathrm{d} s\Big|\\
        &=\lim_{n\rightarrow\infty}   \left|\int_{t_{i}^k}^{ t_{i+1}^k} \int_{\delta\leq |z|\leq 1} V^n_{k,i}(s, z) \psi_n(s,z) \nu(\mathrm{d} z)\mathrm{d} s
        -
         \int_{t_{i}^k}^{ t_{i+1}^k} \int_{\delta\leq |z|\leq 1} V^n_{k,i}(s, z) \psi(s,z) \nu(\mathrm{d} z)\mathrm{d} s \right|
        \\
        &=0.
        \end{split}
\end{align}
Therefore, for any fixed $k\in\mathbb{N}$ and $\delta\in(0,1]$, we have
\begin{eqnarray}\label{I-n-1-4}
\lim_{n\rightarrow\infty}\tilde{I}^{\delta,n,k}_{4}=0.
\end{eqnarray}
Now we collect together all the terms \eqref{eq App 01}, \eqref{I-n-2}, \eqref{I-n-1}, \eqref{I-n-1-1}, \eqref{I-n-1-2}, \eqref{I-n-1-3}, and \eqref{I-n-1-4}, to deduce that for any fixed $k\in\mathbb{N}$, $K\in[1,\infty)$ and $\delta\in(0,1]$,
\begin{align}\label{eq App 02}
&\limsup_{n\rightarrow\infty}\sup_{t\in[0,T]}\|\nabla\Phi^{n}_{\mu}(t)-\nabla\Phi^{0}_{\mu}(t)\|^2_{L^2(\mathbb{R}^d)}\nonumber\\
&\leq
C_{m,\mu,N}\sup_{h\in W^N}\int_0^T\int_{0<|z|<\delta}|z||h(s,z)-1|\nu(dz)ds\nonumber\\
&\hspace{0.5cm}+
e^KT^{\frac12}C_{m,\mu,N}\nu(\delta\leq |z|\leq 1)\sup_{j=0,1,2,...}\Big(\int_0^T \|\nabla\Phi^{j}_{\mu}(s)-\nabla\Phi^{j}_{\mu}(\bar{s}_k)\|^2_{L^2(\mathbb{R}^d)}d s\Big)^{\frac12}\nonumber\\
&\hspace{0.5cm}+
e^KTC_{m,\mu,N}\nu(\delta\leq |z|\leq 1)\sup_{s\in[0,T]}\|Y^{0}(s)-Y^{0}(\bar{s}_k)\|_{L^2(\mathbb{R}^d)}\nonumber\\
&\hspace{0.5cm}+\frac{C_{m,\mu,N}}{K}\nonumber\\
&\hspace{0.5cm}+
C_{m,\mu,N} \sup_{h\in W^N} \sup _{1 \leqslant i \leqslant 2^{k}}\int_{t_{i-1}}^{t_i} \int_{B}|z| |h(s,z)-1|\nu(dz)ds.
\end{align}
In light of statement (4) of Lemma \ref{lem-est-M-n-mu}, Lemma \ref{lemma-est-ass} and the fact $Y^0\in C([0,T],L^2(\mathbb{R}^d))$, letting $k\rightarrow\infty$, we find for any fixed $K\in[1,\infty)$ and $\delta\in(0,1]$,
\begin{eqnarray}\label{eq App 025}
&&\limsup_{n\rightarrow\infty}\sup_{t\in[0,T]}\|\nabla\Phi^{n}_{\mu}(t)-\nabla\Phi^{0}_{\mu}(t)\|^2_{L^2(\mathbb{R}^d)}\nonumber\\
&&\leq
C_{m,\mu,N}\sup_{h\in W^N}\int_0^T\int_{0<|z|<\delta}|z||h(s,z)-1|\nu(dz)ds+\frac{C_{m,\mu,N}}{K}.
\end{eqnarray}
Consequently,  on account of \eqref{eq zhai 1}, it is not difficulty to conclude that
\begin{eqnarray}\label{eq App 03}
\limsup_{n\rightarrow\infty}\sup_{t\in[0,T]}\|\nabla\Phi^{n}_{\mu}(t)-\nabla\Phi^{0}_{\mu}(t)\|^2_{L^2(\mathbb{R}^d)}=0,
\end{eqnarray}
which gives (\ref{eq H1 01}) and hence the proof of this lemma is complete.

\end{proof}
At last we are ready to prove \eqref{eq LDP 6}. 

\begin{lemma}\label{Lemma-M-n-appro}
For any fixed $\mu>0$, we have
\begin{align*}
\lim_{n\rightarrow\infty}\|\Phi^{n}_{\mu}-\Phi^{0}_{\mu}\|_{E^{(T,p)}}=0.
 \end{align*}
\end{lemma}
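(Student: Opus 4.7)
The plan is to leverage the already-established uniform $H^1(\mathbb{R}^d)$ convergence from Lemma \ref{lemma-D-U} and upgrade it to convergence in $E^{(T,p)} = L^\infty(0,T;L^2(\mathbb{R}^d)) \cap L^p(0,T;L^r(\mathbb{R}^d))$ purely via Sobolev embedding. The heavy lifting, namely the control of the multiple-scale contributions indexed by $\delta$, $k$, $K$, and $n$, has been done in Lemma \ref{lemma-D-U}; what remains here is a topological upgrade.

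For the $L^\infty(0,T;L^2(\mathbb{R}^d))$ component, the continuous inclusion $\|v\|_{L^2} \leq \|v\|_{H^1}$ immediately gives
\begin{equation*}
\sup_{t\in[0,T]} \|\Phi^n_\mu(t) - \Phi^0_\mu(t)\|_{L^2(\mathbb{R}^d)} \leq \sup_{t\in[0,T]} \|\Phi^n_\mu(t) - \Phi^0_\mu(t)\|_{H^1(\mathbb{R}^d)} \longrightarrow 0 \quad \text{as } n\to\infty,
\end{equation*}
directly from Lemma \ref{lemma-D-U}.

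For the $L^p(0,T;L^r(\mathbb{R}^d))$ component, I will invoke the Sobolev embedding $H^1(\mathbb{R}^d) \hookrightarrow L^r(\mathbb{R}^d)$. Since $(p,r)$ is admissible with $r = 2\sigma+2$ and $\sigma \in (0,2/d)$, the exponent $r$ satisfies $r < 2 + 4/d$, which for $d \geq 3$ is dominated by the Sobolev critical exponent $2d/(d-2)$ (the inequality $(d+2)(d-2) \leq d^2$ being trivial), while for $d = 1, 2$ the embedding into $L^r$ is immediate for any finite $r \geq 2$. Hence there exists a constant $C_S > 0$ with $\|v\|_{L^r(\mathbb{R}^d)} \leq C_S \|v\|_{H^1(\mathbb{R}^d)}$ for all $v \in H^1(\mathbb{R}^d)$, and consequently
\begin{equation*}
\|\Phi^n_\mu - \Phi^0_\mu\|_{L^p(0,T;L^r(\mathbb{R}^d))} \leq T^{1/p} C_S \sup_{t\in[0,T]} \|\Phi^n_\mu(t) - \Phi^0_\mu(t)\|_{H^1(\mathbb{R}^d)} \longrightarrow 0,
\end{equation*}
again by Lemma \ref{lemma-D-U}. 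Combining the two estimates yields the lemma.

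No step here should pose a genuine obstacle, as the principal analytical work is already encoded in Lemma \ref{lemma-D-U}; the only minor verification is the Sobolev embedding for the admissible exponent $r$, which is guaranteed by the subcriticality assumption $\sigma \in (0, 2/d)$.
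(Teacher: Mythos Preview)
Your proof is correct and takes essentially the same approach as the paper: both arguments invoke Lemma \ref{lemma-D-U} for uniform $H^1(\mathbb{R}^d)$ convergence and then upgrade to $E^{(T,p)}$ via the Sobolev embedding $H^1(\mathbb{R}^d)\hookrightarrow L^r(\mathbb{R}^d)\cap L^2(\mathbb{R}^d)$, valid because $r=2\sigma+2<2+4/d$ lies below the critical Sobolev exponent. Your verification of the embedding range is slightly more explicit than the paper's, but the logic is identical.
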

\begin{proof}

Since $H^{1}(\mathbb{R}^d)\hookrightarrow L^q(\mathbb{R}^d)$ for every $q\in [2,\frac{2d}{d-2})$ (c.f.  \cite[Page 206]{Triebel}), we have $H^1(\mathbb{R}^d))\hookrightarrow L^r(\mathbb{R}^d)\cap L^2(\mathbb{R}^d)$, as $r=2\sigma+2$ and $\sigma\in(0,\frac2d)$, and hence $C(0,T;H^1(\mathbb{R}^n))\hookrightarrow L^p(0,T;L^r(\mathbb{R}^d))\cap C(0,T;L^2(\mathbb{R}^n))$. It then follows from Lemma \ref{lemma-D-U} that
\begin{align*}
 \lim_{n\rightarrow\infty} \|\Phi^{n}_{\mu}-\Phi^{0}_{\mu}\|_{E^{(T,p)}}
  \leq
  C  \lim_{n\rightarrow\infty} \sup_{t\in[0,T]} \|\Phi^{n}_{\mu}(t)-\Phi^{0}_{\mu}(t)\|_{H^1(\mathbb{R}^d)}=0,
\end{align*}
from which the required result follows.

\end{proof}

The proof of Proposition \ref{Yu-con} is now complete.

\section{Proof of Proposition \ref{lem LDP 2}}\label{sect-prop-2}

As a preparation for the proof of Proposition \ref{lem LDP 2}, let us first quote the following results.
\begin{lemma}\label{lem-est-nonlinearity}\cite[Lemma 2.17]{Saa15}. Let $T>0$ and $u \in C\left([0, T] ; \mathbb{R}_{+}\right)$, such that
$$
u \leq A+B u^{\theta} \quad \text { on }[0, T],
$$
where $A, B>0, \theta>1, A<\left(1-\frac{1}{\theta}\right)(\theta B)^{-\frac{1}{\theta-1}}$, and $u(0) \leq(\theta B)^{-\frac{1}{\theta-1}}$. Then $$u \leq \frac{\theta}{\theta-1} A\quad\text{ on }[0, T].$$
\end{lemma}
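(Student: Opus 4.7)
The plan is to reformulate the inequality as a sign condition on a single real-variable function, identify the two branches of the feasible region, and use continuity to show that $u$ remains trapped in the lower branch.

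First I would introduce the auxiliary function $g:[0,\infty)\to\mathbb{R}$ defined by
\[
g(x)=A+Bx^{\theta}-x.
\]
Its derivative $g'(x)=\theta Bx^{\theta-1}-1$ vanishes only at $x^{\ast}:=(\theta B)^{-1/(\theta-1)}$, and $g$ is strictly decreasing on $[0,x^{\ast}]$ and strictly increasing on $[x^{\ast},\infty)$. A direct computation gives
\[
g(x^{\ast})=A+B(x^{\ast})^{\theta}-x^{\ast}=A-\Bigl(1-\tfrac{1}{\theta}\Bigr)x^{\ast},
\]
which by the standing hypothesis $A<\bigl(1-\tfrac{1}{\theta}\bigr)(\theta B)^{-1/(\theta-1)}$ is strictly negative. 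Combined with $g(0)=A>0$ and $\lim_{x\to\infty}g(x)=+\infty$, the function $g$ has exactly two positive zeros $r_{1}\in(0,x^{\ast})$ and $r_{2}\in(x^{\ast},\infty)$, and satisfies $g<0$ on $(r_{1},r_{2})$ and $g\geq 0$ on $[0,r_{1}]\cup[r_{2},\infty)$.

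Next I would translate the differential-free hypothesis $u\leq A+Bu^{\theta}$ into the statement $g(u(t))\geq 0$ for every $t\in[0,T]$. Since $u(0)\leq x^{\ast}$ and $g$ is strictly decreasing on $[0,x^{\ast}]$ with $g(r_{1})=0$, the inequality $g(u(0))\geq 0$ forces $u(0)\leq r_{1}$. Now comes the continuity step, which I expect to be the only subtle point: set
\[
t_{\ast}:=\sup\{\,t\in[0,T]:u(s)\leq r_{1}\text{ for all }s\in[0,t]\,\}.
\]
If $t_{\ast}<T$, then by continuity $u(t_{\ast})=r_{1}$ and there exist times $t>t_{\ast}$ arbitrarily close to $t_{\ast}$ for which $u(t)>r_{1}$; by continuity again $u(t)$ stays strictly less than $r_{2}$ on some right neighborhood of $t_{\ast}$, so $u(t)\in(r_{1},r_{2})$ and hence $g(u(t))<0$, contradicting $g\circ u\geq 0$. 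Therefore $t_{\ast}=T$ and $u(t)\leq r_{1}$ on $[0,T]$.

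Finally, I would bound $r_{1}$ explicitly. From $r_{1}\leq x^{\ast}=(\theta B)^{-1/(\theta-1)}$ we obtain $Br_{1}^{\theta-1}\leq 1/\theta$, so the defining equation $r_{1}=A+Br_{1}^{\theta}=A+(Br_{1}^{\theta-1})r_{1}$ yields
\[
r_{1}\leq A+\tfrac{1}{\theta}r_{1},\qquad\text{i.e.,}\qquad r_{1}\leq\tfrac{\theta}{\theta-1}A,
\]
which combined with $u\leq r_{1}$ completes the proof. The key subtlety, as indicated above, is ruling out a ``jump'' across the forbidden interval $(r_{1},r_{2})$; this is exactly where continuity of $u$ is used, and why the assumption $u\in C([0,T];\mathbb{R}_{+})$ (rather than merely measurable or c\`adl\`ag) is essential.
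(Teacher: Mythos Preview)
Your proof is correct and complete. The paper does not actually prove this lemma; it simply quotes it from \cite[Lemma 2.17]{Saa15} and uses it as a black box, so there is no proof in the paper to compare against. Your argument---analyzing the two roots of $g(x)=A+Bx^{\theta}-x$, using continuity to trap $u$ below the smaller root $r_{1}$, and then bounding $r_{1}\leq\frac{\theta}{\theta-1}A$ via $Br_{1}^{\theta-1}\leq 1/\theta$---is the standard and natural proof of this bootstrap-type lemma.
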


\vskip 0.3cm
For $z\in B$, $u\in\mathbb{C}$ and $0<\varepsilon \leqslant 1$, we denote
\begin{align*}
&G(\varepsilon, z, u):=\Phi^\varepsilon( z, u)-u \\
&H(\varepsilon, z, u):=\Phi^\varepsilon(z, u)-u+\varepsilon\mathrm{i}\sum_{j=1}^{m} z_{j}  g_{j}(u).
\end{align*}

\begin{lemma}\cite[Lemma 3.3]{BLZ} \label{lem-G+H}
There exists $C^{i}_m>0$, $ i=1, \ldots 4$ such that for any $z \in B$ and $y, y_{i} \in \mathbb{C}, i=1,2$, and $0<\varepsilon \leqslant 1$,
\begin{align*}
|G(\varepsilon,z, y)| & \leq C_{m}^{1}\varepsilon |z|_{\mathbb{R}^{m}}|y| \\
\left|G\left(\varepsilon,z, y_{1}\right)-G\left(\varepsilon,z, y_{2}\right)\right| & \leq C_{m}^{2}\varepsilon|z|_{\mathbb{R}^{m}}\left|y_{1}-y_{2}\right|\\
|H(\varepsilon,z, y)| & \leq C_{m}^{3}\varepsilon^2|z|_{\mathbb{R}^{m}}^{2}|y| \\
\left|H\left(\varepsilon,z, y_{1}\right)-H\left(\varepsilon,z, y_{2}\right)\right| &\leq C_{m}^{4}\varepsilon^2|z|_{\mathbb{R}^{m}}^{2}\left|y_{1}-y_{2}\right|.
\end{align*}

\end{lemma}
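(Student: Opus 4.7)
The four bounds concern the time-$1$ flow $\Phi^\varepsilon(z,x)=\Phi^\varepsilon(1,z,x)$ of the autonomous ODE $\partial_t\Phi^\varepsilon=-\varepsilon\mathrm{i}\sum_{j=1}^{m}z_jg_j(\Phi^\varepsilon)$ with $\Phi^\varepsilon(0,z,x)=x$. The plan is to derive all four estimates from integral representations of $G$ and $H$, combined with a conservation law and a Gronwall-type argument.

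As a preliminary step, the saturation structure $g_j(y)=\tilde g_j(|y|^2)y$ with $\tilde g_j$ real-valued implies $\overline{y}g_j(y)=\tilde g_j(|y|^2)|y|^2\in\mathbb{R}$, so (using that $z_j\in\mathbb{R}$) $\frac{d}{dt}|\Phi^\varepsilon(t,z,x)|^2=-2\varepsilon\sum_{j}z_j\mathrm{Re}\bigl(\mathrm{i}\overline{\Phi^\varepsilon}g_j(\Phi^\varepsilon)\bigr)=0$. Hence $|\Phi^\varepsilon(t,z,x)|\equiv|x|$ on $[0,1]$. Combined with $g_j(0)=0$ and \eqref{cond-g-1}, this yields the pointwise linear bound $|g_j(v)|\leq L_1|v|$ for every $v\in\mathbb{C}$, which is the building block for everything that follows.

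For bound (1), integrating the ODE on $[0,1]$ and inserting the two facts above gives at once $|G(\varepsilon,z,y)|\leq \varepsilon L_1\sqrt{m}\,|z|\,|y|$. For bound (2), the difference equation for $\Phi^\varepsilon(t,z,y_1)-\Phi^\varepsilon(t,z,y_2)$ together with \eqref{cond-g-1} and Gronwall's inequality yields
$$|\Phi^\varepsilon(t,z,y_1)-\Phi^\varepsilon(t,z,y_2)|\leq e^{\varepsilon L_1\sqrt m\,|z|\,t}|y_1-y_2|\leq e^{L_1\sqrt m}|y_1-y_2|$$
uniformly in $\varepsilon\in(0,1]$, $|z|\leq 1$, $t\in[0,1]$. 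Plugging this uniform bound into the integral representation $G(\varepsilon,z,y_1)-G(\varepsilon,z,y_2)=-\varepsilon\mathrm{i}\int_0^1\sum_j z_j[g_j(\Phi^\varepsilon(t,z,y_1))-g_j(\Phi^\varepsilon(t,z,y_2))]\,dt$ and applying \eqref{cond-g-1} once more extracts the required prefactor $\varepsilon|z|$.

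Bound (3) starts from the identity $H(\varepsilon,z,u)=-\varepsilon\mathrm{i}\int_0^1\sum_j z_j[g_j(\Phi^\varepsilon(t,z,u))-g_j(u)]\,dt$, obtained by subtracting the initial velocity $\partial_t\Phi^\varepsilon(0,z,u)=-\varepsilon\mathrm{i}\sum_j z_jg_j(u)$; applying \eqref{cond-g-1} and then the time-$t$ version of bound (1) to $|\Phi^\varepsilon(t,z,u)-u|$ produces the second factor of $\varepsilon|z|$. The most delicate step is bound (4): the naive use of \eqref{cond-g-1} in the preceding argument only delivers order $\varepsilon|z|$, so condition \eqref{cond-g-2} is indispensable. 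The plan is to iterate the representation of $H$ once more via the chain rule $\frac{d}{ds}g_j(\Phi^\varepsilon(s,z,u))=-\varepsilon\mathrm{i}\sum_k z_k g_j'(\Phi^\varepsilon(s,z,u))g_k(\Phi^\varepsilon(s,z,u))$, obtaining
$$H(\varepsilon,z,u)=-\varepsilon^2\int_0^1\!\!\int_0^t\sum_{j,k}z_jz_k\,g_j'(\Phi^\varepsilon(s,z,u))g_k(\Phi^\varepsilon(s,z,u))\,ds\,dt.$$
Taking differences in $u$ converts the integrand into precisely the quantity controlled by \eqref{cond-g-2}, and combining with the continuous-dependence estimate from step (2) delivers bound (4). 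The main conceptual obstacle throughout is accounting for both powers of $\varepsilon|z|$ in the bounds on $H$: the first comes from the fact that $H$ subtracts the first-order Taylor coefficient of $\Phi^\varepsilon$ at $t=0$, while the second requires a further integration of the ODE — and for the Lipschitz version, exactly the combined-regularity condition \eqref{cond-g-2} that is built into the hypotheses.
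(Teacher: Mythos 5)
The paper gives no proof of this lemma --- it is imported verbatim from \cite[Lemma 3.3]{BLZ} --- so there is nothing internal to compare against; your argument is correct and reproduces the standard proof (conservation of $|\Phi^\varepsilon(t,z,\cdot)|$ along the flow from the saturation structure, the linear bound $|g_j(v)|\le L_1|v|$ via $g_j(0)=0$ and \eqref{cond-g-1}, Gronwall for the dependence on the initial datum, and the iterated integral representation of $H$ combined with \eqref{cond-g-2} for the last estimate). The one point worth stating explicitly is that $g_j'$ is the real Fr\'echet derivative (the maps $g_j$ are not holomorphic), so in the chain rule the factor $-\varepsilon\mathrm{i}$ multiplies the output of $g_j'$ rather than passing through its argument; this matches the convention under which \eqref{cond-g-2} is phrased and does not affect any of the four bounds.
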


The following uniform estimate for the controlled stochastic NLS is essential to the proof of Proposition \ref{lem LDP 2}.

\begin{lemma}\label{lem uniform}

For any $\eta\in(0,1)$,
there is a constant $M_\eta>0$ such that
\begin{align*}
      \sup_{\varepsilon\in(0,1)}P\Big[\|X^{\psi_\varepsilon}\|_{L^{p}(0, T; L^{r}(\mathbb{R}^{d}))}\geq M_\eta\Big]\leq \eta.
  \end{align*}
\end{lemma}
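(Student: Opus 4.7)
The plan is to combine the deterministic and stochastic Strichartz estimates of Proposition~\ref{prop-Stri-est} with the super-linear Gronwall-type inequality in Lemma~\ref{lem-est-nonlinearity}, iterated on a partition of $[0,T]$ into sub-intervals whose length is chosen so that the smallness condition of that lemma is met. The key structural ingredient is the pathwise $L^2$-conservation \eqref{eq lem uniform 01}, which keeps the ``initial datum'' on every sub-interval equal to $\|u_0\|_{L^2}$ and prevents uncontrolled growth across the iteration. The main obstacle is the super-linearity of $|X^{\psi_\varepsilon}|^{2\sigma}X^{\psi_\varepsilon}$, which rules out a direct Gronwall closure; Lemma~\ref{lem-est-nonlinearity} is exactly designed to handle it on short times.

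First I will analyse the mild form \eqref{EQ4 LDP 2-02} restricted to an arbitrary sub-interval $[\tau,\tau+s]$ starting from $X^{\psi_\varepsilon}(\tau)$, splitting it into the local martingale $\widetilde M^\varepsilon_\tau(\tau+s):=\int_\tau^{\tau+s}\!\!\int_B S_{\tau+s-r}G(\varepsilon,z,X^{\psi_\varepsilon}(r-))\,\widetilde N^{\varepsilon^{-1}\psi_\varepsilon}(\mathrm{d} r,\mathrm{d} z)$ and the local drift $\widetilde Q^\varepsilon_\tau$ coming from the two remaining compensator integrals. Using the conservation $\|X^{\psi_\varepsilon}(\cdot)\|_{L^2}\equiv\|u_0\|_{L^2}$, Lemma~\ref{lem-G+H}, Assumption~\ref{assm-g} and \eqref{eq lemma-est-ass 01}--\eqref{eq lemma-est-ass 02}, the Strichartz estimates \eqref{Stri-est-2}--\eqref{Stri-est-3} applied with the endpoint pair $(\gamma,\rho)=(\infty,2)$ will yield a pathwise bound $\|\widetilde Q^\varepsilon_\tau\|_{L^p(\tau,\tau+s;L^r)}\leq C_{N,u_0}$, uniform in $\tau,s$ and $\varepsilon\in(0,1]$. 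For the martingale, the stochastic Strichartz estimate \eqref{sto-stri-est-1}, applied to the compensated Poisson measure $\widetilde N^{\varepsilon^{-1}\psi_\varepsilon}$ (intensity $\varepsilon^{-1}\psi_\varepsilon\,\nu\otimes\mathrm{d} r$) with $q=2$ and combined with the Marcus bound $\|G(\varepsilon,z,y)\|_{L^2}\leq C\varepsilon|z|\|y\|_{L^2}$, will give $\mathbb{E}\|\widetilde M^\varepsilon_\tau\|^2_{L^p(\tau,\tau+s;L^r)}\leq C\varepsilon\|u_0\|^2_{L^2}\int_\tau^{\tau+s}\!\int_B|z|^2\psi_\varepsilon\,\nu(\mathrm{d} z)\mathrm{d} r$.

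Next I will partition $[0,T]$ into intervals $[\tau_j,\tau_{j+1}]$ of length $\widetilde T$, with $\tau_j=j\widetilde T$ and $K=\lceil T/\widetilde T\,\rceil$, and apply the Strichartz estimates with admissible pair $(p,r)$ on each $[\tau_j,\tau_j+s]$. Handling $\||X^{\psi_\varepsilon}|^{2\sigma}X^{\psi_\varepsilon}\|_{L^{p'}L^{r'}}$ as in \eqref{eq Up 02}, the continuous non-decreasing function $u_j(s):=\|X^{\psi_\varepsilon}\|_{L^p(\tau_j,\tau_j+s;L^r)}$, with $u_j(0)=0$, will satisfy $u_j(s)\leq A+B\,u_j(s)^{2\sigma+1}$ for $s\in[0,\widetilde T]$, where $B:=C|\lambda|\widetilde T^{\,1-d\sigma/2}$ and $A:=C\|u_0\|_{L^2}+C_{N,u_0}+\max_{0\leq j<K}\|\widetilde M^\varepsilon_{\tau_j}\|_{L^p(\tau_j,\tau_{j+1};L^r)}$; crucially, $A$ does not depend on $j$ because \eqref{eq lem uniform 01} gives $\|X^{\psi_\varepsilon}(\tau_j)\|_{L^2}=\|u_0\|_{L^2}$ for every $j$. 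Since the second-moment bounds of the previous step sum to a constant independent of $K$ and $\varepsilon\in(0,1]$, namely $\sum_{j=0}^{K-1}\mathbb{E}\|\widetilde M^\varepsilon_{\tau_j}\|^2_{L^p(\tau_j,\tau_{j+1};L^r)}\leq C_N\|u_0\|^2_{L^2}$, Chebyshev's inequality will yield $P\bigl(\max_j\|\widetilde M^\varepsilon_{\tau_j}\|_{L^p(\tau_j,\tau_{j+1};L^r)}>M_1\bigr)\leq C_N\|u_0\|^2_{L^2}/M_1^2$. Given $\eta\in(0,1)$, I therefore pick $M_1:=\sqrt{C_N\|u_0\|^2_{L^2}/\eta}$, then choose $\widetilde T$ so small (possible because $1-d\sigma/2>0$) that with $\theta:=2\sigma+1$ and the resulting $A,B$ both conditions of Lemma~\ref{lem-est-nonlinearity} hold, and conclude that on the event $\{\max_j\|\widetilde M^\varepsilon_{\tau_j}\|_{L^p(\tau_j,\tau_{j+1};L^r)}\leq M_1\}$ of probability $\geq 1-\eta$ one has $\|X^{\psi_\varepsilon}\|^p_{L^p(0,T;L^r)}=\sum_j u_j(\widetilde T)^p\leq K\bigl(\theta A/(\theta-1)\bigr)^p$, so $M_\eta:=K^{1/p}\,\theta A/(\theta-1)$ works uniformly in $\varepsilon\in(0,1)$.
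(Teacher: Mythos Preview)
Your proposal is correct and follows essentially the same strategy as the paper: combine the Strichartz estimates of Proposition~\ref{prop-Stri-est} with Lemma~\ref{lem-G+H} and the $L^2$-conservation~\eqref{eq lem uniform 01} to reduce to an inequality of the form $u\le A+Bu^{2\sigma+1}$, then invoke Lemma~\ref{lem-est-nonlinearity} on short subintervals and iterate. The only noteworthy difference is in how the stochastic integral is controlled across the iteration: the paper bounds the $p$-th moment of the single global term $I_2(X^{\psi_\varepsilon})$ on $[0,T]$ and fixes one high-probability event $\{M^T_\varepsilon\le\Theta_\eta\}$ before iterating, whereas you apply the stochastic Strichartz estimate with $q=2$ to the \emph{restarted} martingales $\widetilde M^\varepsilon_{\tau_j}$ and exploit the fact that their second-moment bounds sum to $C_N\|u_0\|_{L^2}^2$ independently of the number $K$ of subintervals, so that Chebyshev controls $\max_j\|\widetilde M^\varepsilon_{\tau_j}\|$ uniformly in $K$ and in $\varepsilon$. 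Your bookkeeping is arguably cleaner on this point, since it sidesteps the question of why the same event controls every restarted stochastic integral; conversely, the paper's version keeps a single random variable throughout and avoids introducing $K$ separate martingales. Both lead to the same conclusion with the same constants' dependence on $N$ and $\|u_0\|_{L^2}$.
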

\begin{proof}[Proof of Lemma \ref{lem uniform}]

Recall that (see (\ref{EQ4 LDP 2-02}))
 \begin{align*}
X^{\psi_\varepsilon}(t)
=& S_{t} u_{0}-\mathrm{i} \int_{0}^{t } S_{t -s}(\lambda|X^{\psi_\varepsilon}(s)|^{2 \sigma} X^{\psi_\varepsilon}(s)) \mathrm{d} s\\
&+ \int_{0}^{t} \int_{B}  S_{t-s}\left[\Phi^\varepsilon(z, X^{\psi_\varepsilon}(s-))-X^{\psi_\varepsilon}(s-)\right] \tilde{N}^{\varepsilon^{-1}\psi_\varepsilon}(\mathrm{d} s, \mathrm{d} z)\\
&+\varepsilon^{-1}\int_{0}^{t } \int_{B} S_{t -s}\Big[\Phi^\varepsilon(z, X^{\psi_\varepsilon}(s))-X^{\psi_\varepsilon}(s)+\varepsilon\;\mathrm{i} \sum_{j=1}^{m} z_{j} g_{j}(X^{\psi_\varepsilon}(s))\Big]\psi_{\varepsilon}(s,z) \nu(\mathrm{d} z) \mathrm{d} s\\
&-\int_{0}^{t } \int_{B} S_{t -s}\Big[\sum_{j=1}^{m}\mathrm{i}  z_{j} g_{j}(X^{\psi_{\varepsilon}}(s))(\psi_{\varepsilon}(s,z)-1)\Big] \nu(\mathrm{d} z) \mathrm{d} s\\
=&S_{t} u_{0}+I_1(X^{\psi_\varepsilon})(t)+I_2(X^{\psi_\varepsilon})(t)+I_3(X^{\psi_\varepsilon})(t)+I_4(X^{\psi_\varepsilon})(t).
\end{align*}

An argument similar to those that established (\ref{eq Up 02}) implies
\begin{align*}
\|I_1(X^{\psi_\varepsilon})\|_{L^p(0,t;L^r(\mathbb{R}^d))}
\leq C t^{1-\frac{d \sigma}{2}}\|X^{\psi_\varepsilon}\|_{L^p(0,t;L^r(\mathbb{R}^d))}^{2 \sigma+1}.
\end{align*}
Recalling the stochastic Strichartz inequality \eqref{sto-stri-est-1}, Lemma \ref{lem-G+H} and $L^2(\mathbb{R}^d)$-conservation law of $X^{\psi_\varepsilon}$(see (\ref{eq lem uniform 01})), we obtain
\begin{align}\label{Uniform estimate I2}
\mathbb{E}\|I_2(X^{\psi_\varepsilon})\|_{L^{p}\left(0, T ; L^{r}\left(\mathbb{R}^{d}\right)\right)}^p\leq& C_{p} \mathbb{E}\left(\int_{0}^{T} \int_{B}\|\Phi^\varepsilon(z, X^{\psi_\varepsilon}(s))-X^{\psi_\varepsilon}(s)\|_{L^{2}(\mathbb{R}^{d})}^{2}\varepsilon^{-1}\psi_{\varepsilon} (s,z)\nu(\mathrm{d} z) \mathrm{d} s\right)^{\frac{p}{2}}\nonumber\\
&+C_{p} \mathbb{E}\left(\int_{0}^{T} \int_{B}\|\Phi^\varepsilon(z, X^{\psi_\varepsilon}(s))-X^{\psi_\varepsilon}(s)\|_{L^{2}(\mathbb{R}^{d})}^{p}\varepsilon^{-1}\psi_{\varepsilon}(s,z) \nu(\mathrm{d} z) \mathrm{d} s\right)\nonumber\\
\leq&  C_{p,m} \mathbb{E}\left(\int_{0}^{T} \int_{B}\varepsilon|z|_{\mathbb{R}^{m}}^{2}\|X^{\psi_\varepsilon}(s)\|_{L^{2}(\mathbb{R}^{d})}^{2}\psi_{\varepsilon}(s,z) \nu(\mathrm{d} z) \mathrm{d} s\right)^{\frac{p}{2}}\nonumber\\
&+C_{p, m} \mathbb{E}\left(\int_{0}^{T} \int_{B}\varepsilon^{p-1}|z|_{\mathbb{R}^{m}}^{p}\|X^{\psi_\varepsilon}(s)\|_{L^{2}(\mathbb{R}^{d})}^{p}\psi_{\varepsilon}(s,z) \nu(\mathrm{d} z) \mathrm{d} s\right)\nonumber\\
&\leq C_{p, m}K_1(\varepsilon^{p/2}+\varepsilon^{p-1})\|u_0\|_{L^2(\mathbb{R}^d)}^{p},
\end{align}
 where $K_1=\sup_{h\in W^N}\left(\int_0^T\int_{B}|z|_{\mathbb{R}^{m}}^{2}h (s,z) \nu(\mathrm{d} z)\dif s\right)^{\frac{p}{2}}+\sup_{h\in W^N}\int_0^T\int_{B}|z|_{\mathbb{R}^{m}}^{2}h (s,z) \nu(\mathrm{d} z)\dif s<\infty$ as guaranteed by Lemma \ref{lemma-est-ass}.
Again, applying  the Strichartz inequality \eqref{Stri-est-3}, Lemma \ref{lem-G+H} and $L^2(\mathbb{R}^d)$-conservation law of $X^{\psi_\varepsilon}$ yields
 \begin{align}\label{Uniform estimate I3}
&\|I_3(X^{\psi_\varepsilon})\|_{L^p(0,T;L^r(\mathbb{R}^d))}\nonumber\\
&=\bigg\|  \varepsilon^{-1}\int_{0}^{\cdot } \int_{B} S_{\cdot -s}\Big[\Phi^\varepsilon(z, X^{\psi_\varepsilon}(s))-X^{\psi_\varepsilon}(s)+\varepsilon\;\mathrm{i} \sum_{j=1}^{m} z_{j} g_{j}(X^{\psi_\varepsilon}(s))\Big] \psi^{\varepsilon}(s,z)\nu(\mathrm{d} z) \mathrm{d} s  \bigg\|_{L^{p}(0, T ; L^{r}(\mathbb{R}^d))}\nonumber\\
&=\varepsilon^{-1}\left\|\int_{0}^{\cdot }S_{\cdot -s} \int_{B}H(\varepsilon,z, X^{\psi_\varepsilon}(s)) \psi^{\varepsilon}(s,z)\nu(\mathrm{d} z) \mathrm{d} s  \right\|_{L^{p}(0, T ; L^{r}(\mathbb{R}^d))}\nonumber\\
&\leq C\varepsilon^{-1} \int_{0}^{T}\left\|\int_{B}  H(\varepsilon,z, X^{\psi_\varepsilon}(s))\psi^{\varepsilon}(s,z)  \nu(\mathrm{d} z)\right\|_{L^{2}(\mathbb{R}^d)} \mathrm{d} s\nonumber\\
&\leq  \varepsilon^{-1}C \int_{0}^{T} \int_{B}\| H(\varepsilon,z, X^{\psi_\varepsilon}(s)) \|_{L^{2}(\mathbb{R}^d)}\psi^{\varepsilon}(s,z) \nu(\mathrm{d} z) \mathrm{d} s\nonumber\\
&\leq  \varepsilon^{-1}C_m \int_{0}^{T}\left(\int_{B}\varepsilon^2 |z|_{\mathbb{R}^{m}}^{2}\psi^{\varepsilon}(s,z) \nu(\mathrm{d} z)\right)\|X^{\psi_\varepsilon}(s)\|_{L^{2}(\mathbb{R}^d)} \mathrm{d} s \nonumber\\
&\leq \varepsilon C_mK_1  \|u_0\|_{L^{2}(\mathbb{R}^{d})}.
 \end{align}

For the term $I_4$,  using again the Minkowski inequality, the Strichartz inequality \eqref{Stri-est-3}, the Cauchy-Schwartz inequality and Assumption \ref{assm-g} gives
 \begin{align*}
\|I_4(X^{\psi_\varepsilon})\|_{L^{p}(0, T ; L^{r}(\mathbb{R}^d))}&= 
 \bigg\| \int_{0}^{\cdot } \int_{B} S_{\cdot -s}\Big[\Big(\sum_{j=1}^{m}\mathrm{i}  z_{j} g_{j}(X^{\psi_{\varepsilon}}(s))\Big)(\psi_{\varepsilon}(s,z)-1)\Big] \nu(\mathrm{d} z) \mathrm{d} s  \bigg\|_{L^{p}(0, T ; L^{r}(\mathbb{R}^d))}\\
&\leq  C\int_{0}^{T }\bigg\| \int_B \Big(\sum_{j=1}^{m}\mathrm{i}  z_{j} g_{j}(X^{\psi_{\varepsilon}}(s))\Big)(\psi_{\varepsilon}(s,z)-1)\nu(\dif z)\bigg\|_{L^2(\mathbb{R}^d)}\dif s\\
&\leq C\int_{0}^{T } \int_{B}\Big\| \Big(\sum_{j=1}^{m}\mathrm{i}  z_{j} g_{j}(X^{\psi_{\varepsilon}}(s))\Big)(\psi_{\varepsilon}(s,z)-1)\Big\|_{L^2(\mathbb{R}^d)} \nu(\mathrm{d} z) \mathrm{d} s\\
&\leq C \int_0^T \int_B|z|_{\mathbb{R}^m}\Big\|\Big(\sum_{j=1}^m|g_{j}(X^{\psi_{\varepsilon}}(s))|^2\Big)^{\frac12}\Big\|_{L^2(\mathbb{R}^d)}|\psi_{\varepsilon}(s,z)-1| \nu(\mathrm{d} z) \mathrm{d} s\\
&\leq C_m\int_0^T \int_B\big\|X^{\psi_{\varepsilon}}(s)\big\|_{L^2(\mathbb{R}^d)} |z|_{\mathbb{R}^m}|\psi_{\varepsilon}(s,z)-1| \nu(\mathrm{d} z) \mathrm{d} s\\
&\leq C_mK_2\big\|u_0\big\|_{L^2(\mathbb{R}^d)},
 \end{align*}
 where $K_2=\sup_{h\in W^N}\int_0^T\int_{B} |z|_{\mathbb{R}^{m}}|h(s,z) -1|\nu(\mathrm{d} z)\dif s<\infty$ (guaranteed by Lemma \ref{lemma-est-ass} again).

It follows from all of the above that, for any $\varepsilon\in(0,1)$,
\begin{align}\label{uniform 01}
\big\|X^{\psi_{\varepsilon}}\big\|_{L^p(0,t;L^r(\mathbb{R}^d))}&\leq C_{m,K_1,K_2}\|u_0\|_{L^2}+ C t^{1-\frac{d \sigma}{2}}\|X^{\psi_\varepsilon}\|_{L^p(0,t;L^r(\mathbb{R}^d))}^{2 \sigma+1}+\|I_2(X^{\psi_{\varepsilon}})\|_{L^p(0,T;L^2(\mathbb{R}^d))}\nonumber\\
&:=M_{\varepsilon}^T+ C t^{1-\frac{d \sigma}{2}}\|X^{\psi_\varepsilon}\|_{L^p(0,t;L^r(\mathbb{R}^d))}^{2 \sigma+1},
 \end{align}
 where $M_{\varepsilon}^T=  C_{m,K_1,K_2}\|u_0\|_{L^2}+\|I_2(X^{\psi_{\varepsilon}})\|_{L^p(0,T;L^2(\mathbb{R}^d))}$.

  By (\ref{Uniform estimate I2}), for any $\eta>0$, there exists $\Theta_\eta>0$ such that
  \begin{eqnarray}\label{Uniform estimate I2 01}
   \inf_{\varepsilon\in(0,1)}P(M_{\varepsilon}^T\leq \Theta_\eta)\geq 1-\eta.
  \end{eqnarray}
  Now fix $\varepsilon\in(0,1)$ and $\omega\in \{M_{\varepsilon}^T\leq \Theta_\eta\}$. We have
  \begin{align*}
\big\|X^{\psi_{\varepsilon}}(\omega)\big\|_{L^p(0,t;L^r(\mathbb{R}^d))}
\leq
\Theta_\eta+ C t^{1-\frac{d \sigma}{2}}\|X^{\psi_\varepsilon}(\omega)\|_{L^p(0,t;L^r(\mathbb{R}^d))}^{2 \sigma+1}.
 \end{align*}
  Choose $T^0=T\wedge\big( (2\sigma+1)C \big( \frac{2\sigma+1}{\sigma}\Theta_\eta  \big)^{2\sigma}\big)^{-\frac{1}{1-d\sigma/2}}$ which ensures
 $$\Theta_\eta<(1-\frac1{2\sigma+1})((2\sigma+1) C (T^0)^{1-\frac{d \sigma}{2}})^{-\frac1{2\sigma}}.$$
So we can apply Lemma \ref{lem-est-nonlinearity} with $\theta=2\sigma+1$ to deduce
 \begin{align*}
 \big\|X^{\psi_{\varepsilon}}(\omega)\big\|_{L^p(0,T^0;L^r(\mathbb{R}^d))}&\leq \frac{2\sigma+1}{2\sigma}\Theta_\eta.
  \end{align*}
  Using arguments similar to those that led up to (\ref{uniform 01}) and $L^2(\mathbb{R}^d)$-conservation law of $X^{\psi_\varepsilon}$, for any $t\geq T^0$, we infer
  \begin{align}\label{uniform 02}
&\big\|X^{\psi_{\varepsilon}}(\omega)\big\|_{L^p(T^0,t;L^r(\mathbb{R}^d))}\nonumber\\
&\leq C_{m,K_1,K_2}\|u_0\|_{L^2}+ C (t-T^0)^{1-\frac{d \sigma}{2}}\|X^{\psi_\varepsilon}\|_{L^p(T^0,t;L^r(\mathbb{R}^d))}^{2 \sigma+1}+\|I_2(X^{\psi_{\varepsilon}})\|_{L^p(T^0,T;L^2(\mathbb{R}^d))}\nonumber\\
&\leq \Theta_\eta+ C (t-T^0)^{1-\frac{d \sigma}{2}}\|X^{\psi_\varepsilon}\|_{L^p(T^0,t;L^r(\mathbb{R}^d))}^{2 \sigma+1},
 \end{align}
 where the constant $C$ is the same as \eqref{uniform 01}.
  Hence,
  \begin{align*}
 \big\|X^{\psi_{\varepsilon}}(\omega)\big\|_{L^p(T^0,2T^0\wedge T;L^r(\mathbb{R}^d))}&\leq \frac{2\sigma+1}{2\sigma}\Theta_\eta.
  \end{align*}
  Therefore,
  \begin{align*}
 \big\|X^{\psi_{\varepsilon}}(\omega)\big\|_{L^p(0,2T^0\wedge T;L^r(\mathbb{R}^d))}&\leq 2\frac{2\sigma+1}{2\sigma}\Theta_\eta.
  \end{align*}
  Iterating similar estimates on $[kT^0\wedge T,(k+1)T^0\wedge T)   ]$, $1\leq k\leq [\frac{T}{T^0}]$, we find
   \begin{align*}
 \big\|X^{\psi_{\varepsilon}}(\omega)\big\|_{L^p(0,T;L^r(\mathbb{R}^d))}&\leq \Big(  \Big[\frac{T}{T^0}   \Big]+1  \Big)\frac{2\sigma+1}{2\sigma}\Theta_\eta.
  \end{align*}
  Hence we infer
  \begin{eqnarray*}
  \inf_{\varepsilon\in(0,1)}P\Big(\big\|X^{\psi_{\varepsilon}}\big\|_{L^p(0,T;L^r(\mathbb{R}^d))}\leq \Big(  \Big[\frac{T}{T^0}   \Big]+1  \Big)\frac{2\sigma+1}{2\sigma}\Theta_\eta\Big)\geq
  \inf_{\varepsilon\in(0,1)}P(M_{\varepsilon}^T\leq \Theta_\eta)\geq 1-\eta,
  \end{eqnarray*}
  which proves Lemma \ref{lem uniform}.

\end{proof}

\begin{proof}[Proof of Proposition \ref{lem LDP 2}]
Given $\eta\in(0,1)$, let $M_\eta$ be the constant appearing in Lemma \ref{lem uniform}.
Define $$\tau_{M_\eta}^{\varepsilon}=\inf \Big\{t \geqslant 0 ,\big\|X^{\psi_{\varepsilon}}\big\|_{L^{p}(0, t;L^{r}(\mathbb{R}^{d}))} \geqslant M_\eta\Big\}\wedge T.$$
Note that
 \begin{align}
 \begin{split}\label{prop-eq-X-psi}
X^{\psi_\varepsilon}(t)
=& S_{t} X^{\psi_\varepsilon}(0)-\mathrm{i} \int_{0}^{t } S_{t -s}(\lambda|X^{\psi_\varepsilon}(s)|^{2 \sigma} X^{\psi_\varepsilon}(s) \mathrm{d} s\\
&+ \int_{0}^{t} \int_{B}  S_{t-s}\left[\Phi^\varepsilon(z, X^{\psi_\varepsilon}(s ))-X^{\psi_\varepsilon}(s  )\right] \tilde{N}^{\varepsilon^{-1}\psi_\varepsilon}(\mathrm{d} s, \mathrm{d} z)\\
&+\varepsilon^{-1}\int_{0}^{t } \int_{B} S_{t -s}\Big[\Phi^\varepsilon(z, X^{\psi_\varepsilon}(s))-X^{\psi_\varepsilon}(s)+\varepsilon\;\mathrm{i} \sum_{j=1}^{m} z_{j} g_{j}(X^{\psi_\varepsilon}(s))\Big]\psi_{\varepsilon}(s,z) \nu(\mathrm{d} z) \mathrm{d} s\\
&-\int_{0}^{t } \int_{B} S_{t -s}\Big[\mathrm{i}\sum_{j=1}^{m}  z_{j} g_{j}(X^{\psi_{\varepsilon}}(s))(\psi_{\varepsilon}(s,z)-1)\Big] \nu(\mathrm{d} z) \mathrm{d} s,\ \ t\in[0,T],
\end{split}
\end{align}
and
\begin{align}
Y^{\psi^{\varepsilon}}(t)=& S_{t} Y^{\psi^{\varepsilon}}(0)-\mathrm{i} \int_{0}^{t } S_{t -s}(\lambda|Y^{\psi^{\varepsilon}} (s)|^{2 \sigma} Y^{\psi^{\varepsilon}}(s) \mathrm{d} s \nonumber\\
&-\int_0^t\!\!\int_{B}S_{t -s}\Big[\sum_{j=1}^{m}\mathrm{i}  z_{j} g_{j}(Y^{\psi^{\varepsilon}}(s))(\psi^{\varepsilon}(s,z)-1)\Big]\nu(\dif z)\dif s,\ \ t\in[0,T].\label{prop-eq-Y-psi}
\end{align}
Taking the difference of the above two equations gives
 \begin{align*}
&X^{\psi_\varepsilon}(t)- Y^{\psi^{\varepsilon}}(t)\\
&=S_{t} \big(X^{\psi_\varepsilon}(0)-Y^{\psi^{\varepsilon}}(0)\big)-\Big[\mathrm{i} \int_{0}^{t } S_{t -s}(\lambda|X^{\psi_\varepsilon}(s)|^{2 \sigma} X^{\psi_\varepsilon}(s) \mathrm{d} s-\mathrm{i} \int_{0}^{t } S_{t -s}(\lambda|Y^{\psi^{\varepsilon}} (s)|^{2 \sigma} Y^{\psi^{\varepsilon}}(s) \mathrm{d} s\Big]\\
&\hspace{0.5cm}+\int_{0}^{t} \int_{B}  S_{t-s}\left[\Phi^\varepsilon(z, X^{\psi_\varepsilon}(s ))-X^{\psi_\varepsilon}(s  )\right] \tilde{N}^{\varepsilon^{-1}\psi_\varepsilon}(\mathrm{d} s, \mathrm{d} z)\\
&\hspace{0.5cm}+\varepsilon^{-1}\int_{0}^{t } \int_{B} S_{t -s}\Big[\Phi^\varepsilon(z, X^{\psi_\varepsilon}(s))-X^{\psi_\varepsilon}(s)+\varepsilon\;\mathrm{i} \sum_{j=1}^{m} z_{j} g_{j}(X^{\psi_\varepsilon}(s))\Big] \psi_{\varepsilon}(s,z)\nu(\mathrm{d} z) \mathrm{d} s\\
&\hspace{0.5cm}-\int_{0}^{t } \int_{B} S_{t -s}\Big[\Big(\sum_{j=1}^{m}\mathrm{i}  z_{j} g_{j}(X^{\psi_{\varepsilon}}(s))-\sum_{j=1}^{m}\mathrm{i}  z_{j} g_{j}(Y^{\psi_{\varepsilon}}(s))\Big)(\psi_{\varepsilon}(s,z)-1)\Big] \nu(\mathrm{d} z) \mathrm{d} s\\
&:=S_{t} \big(X^{\psi_\varepsilon}(0)-Y^{\psi^{\varepsilon}}(0)\big)+\Pi_1^{\psi^{\varepsilon}}(t)+\Pi_2^{\psi^{\varepsilon}}(t)+\Pi_3^{\psi^{\varepsilon}}(t)+\Pi_4^{\psi^{\varepsilon}}(t).
\end{align*}
Recalling Corollary \ref{coro-uni-bou}, we have
\begin{align}\label{Y-psi-boundedness}
\sup_{\varepsilon\in(0,1)}\sup_{\omega\in\Omega}\Big(\sup_{0\leq t\leq T}\|Y^{\psi_{\varepsilon}}(t) \|_{L^2(\mathbb{R}^d)}^2+ \|Y^{\psi_{\varepsilon}}\|_{L^p(0,T;L^r(\mathbb{R}^d))}^p\Big)
\leq C_N<\infty.
\end{align}
By (\ref{Stri-est-1}), we find
\begin{eqnarray*}
&&\sup_{t\in[0,T]}\|S_{t} \Big(X^{\psi_\varepsilon}(0)-Y^{\psi^{\varepsilon}}(0)\Big)\|_{L^2(\mathbb{R}^d)}
+
\|S_{\cdot} \Big(X^{\psi_\varepsilon}(0)-Y^{\psi^{\varepsilon}}(0)\Big)\|_{L^p(0,T;L^r(\mathbb{R}^d))}\\
&&\leq
C\|X^{\psi_\varepsilon}(0)-Y^{\psi^{\varepsilon}}(0)\|_{L^2(\mathbb{R}^d)}.
\end{eqnarray*}
 Using arguments similar to (\ref{eq Up 03}), we infer
 \begin{align*}
& \sup_{0\leq s\leq t\wedge\tau_{M_\eta}^{\varepsilon}}\|\Pi_1^{\psi^{\varepsilon}}(s)\|_{L^2(\mathbb{R}^d)} +\|\Pi_1^{\psi^{\varepsilon}}\|_{L^p(0,t\wedge\tau_{M_\eta}^{\varepsilon};L^r(\mathbb{R}^d))}
\\
&\leq C|  \lambda| t^{1-\frac{d\sigma}2}(\|X^{\psi_\varepsilon}\|_{L^{p}(0,t\wedge\tau_{M_\eta}^{\varepsilon}; L^{r}(\mathbb{R}^{d}))}+\|Y^{\psi_\varepsilon}\|_{L^{p}(0,t\wedge\tau_{M_\eta}^{\varepsilon}; L^{r}(\mathbb{R}^{d}))})^{2 \sigma}\|X^{\psi_\varepsilon}-Y^{\psi_\varepsilon}\|_{L^{p}(0,t\wedge\tau_{M_\eta}^{\varepsilon}; L^{r}(\mathbb{R}^{d}))}\\
&\leq C|  \lambda| t^{1-\frac{d\sigma}2}(M_\eta+C_N)^{2\sigma}\|X^{\psi_\varepsilon}-Y^{\psi_\varepsilon}\|_{L^{p}(0,t\wedge\tau_{M_\eta}^{\varepsilon}; L^{r}(\mathbb{R}^{d}))},
\end{align*}
where have used \eqref{Y-psi-boundedness} in the last inequality.

By the stochastic Strichartz inequality \eqref{sto-stri-est-1}, a similar
argument as proving (\ref{Uniform estimate I2}) shows
 \begin{align}
 \begin{split}\label{prop-proof-est-I2}
   & \mathbb{E} \|\Pi_2^{\psi^{\varepsilon}}\|_{ L^{\infty}(0,T; L^2(\mathbb{R}^d)) \cap L^{p}(0, T; L^r(\mathbb{R}^d))}^p\\
  & \leq C_p \mathbb{E}\sup_{0\leq s\leq T} \bigg\|  \int_{0}^{s} \int_{B}  S_{s-r}\Big[\Phi^\varepsilon(z, X^{\psi_\varepsilon}(r))-X^{\psi_\varepsilon}(r  )\Big] \tilde{N}^{\varepsilon^{-1}\psi_\varepsilon}(\mathrm{d} r, \mathrm{d} z)   \bigg\|^p_{L^2(\mathbb{R}^d)}\\
&\hspace{0.5cm}+ C_p \mathbb{E}\left\|   \int_{0}^{\cdot} \int_{B}  S_{\cdot-r}\left[\Phi^\varepsilon(z, X^{\psi_\varepsilon}(r ))-X^{\psi_\varepsilon}(r  )\right] \tilde{N}^{\varepsilon^{-1}\psi_\varepsilon}(\mathrm{d} r, \mathrm{d} z)   \right\|^p_{L^p(0,T;L^r(\mathbb{R}^d))}\\
&\leq C_{p, m} \mathbb{E}\left(\int_{0}^{T} \int_{B}\varepsilon^2|z|_{\mathbb{R}^{m}}^{2}\|X^{\psi_\varepsilon}(r )\|_{L^{2}\left(\mathbb{R}^{d}\right)}^{2}\varepsilon^{-1}\psi_\varepsilon (r,z) \nu(\mathrm{d} z) \mathrm{d} r\right)^{\frac{p}{2}} \\
&\hspace{0.5cm}+C_{p, m} \mathbb{E}\left(\int_{0}^{T} \int_{B}\varepsilon^p|z|_{\mathbb{R}^{m}}^{p}\|X^{\psi_\varepsilon}(r )\|_{L^{2}\left(\mathbb{R}^{d}\right)}^{p}\varepsilon^{-1}\psi_\varepsilon (r,z) \nu(\mathrm{d} z) \mathrm{d} r\right) \\
&\leq C_{p,m}K_1 \big(\varepsilon^{\frac{p}2}+\varepsilon^{p-1}\big)\|u_0\|_{L^2(\mathbb{R}^d)}^p.
\end{split}
 \end{align}
Again, by applying the Strichartz inequality (\ref{Stri-est-2}) and (\ref{Stri-est-3}) and using calculations similar to those in (\ref{Uniform estimate I3}), we have
 \begin{align*}
& \sup_{0\leq s\leq T}\|\Pi_3^{\psi^{\varepsilon}}(s)\|_{L^2(\mathbb{R}^d)} +\|\Pi_3^{\psi^{\varepsilon}}\|_{L^p(0,T;L^r(\mathbb{R}^d))}\\
&\leq  \varepsilon^{-1}C \int_{0}^{T} \int_{B}\| H(\varepsilon,z, X^{\psi_\varepsilon}(s)) \|_{L^{2}(\mathbb{R}^d)}\psi^{\varepsilon}(s,z) \nu(\mathrm{d} z) \mathrm{d} s\nonumber\\
&\leq  \varepsilon^{-1}C_m \int_{0}^{T}\left(\int_{B}\varepsilon^2 |z|_{\mathbb{R}^{m}}^{2}\psi^{\varepsilon}(s,z) \nu(\mathrm{d} z)\right)\|X^{\psi_\varepsilon}(s)\|_{L^{2}(\mathbb{R}^d)} \mathrm{d} s \nonumber\\
&\leq \varepsilon C_mK_2 \|u_0\|_{L^{2}(\mathbb{R}^{d})}.
 \end{align*}

Now consider $\Pi_4^{\psi^{\varepsilon}}$. Applying the Minkowski inequality, Strichartz's estimate and the Cauchy-Schwartz inequality, we obtain
 \begin{align*}
& \sup_{0\leq s\leq t\wedge\tau_{M_\eta}^{\varepsilon}}\|\Pi_4^{\psi^{\varepsilon}}(s)\|_{L^2(\mathbb{R}^d)} +\|\Pi_4^{\psi^{\varepsilon}}\|_{L^p(0,t\wedge\tau_{M_\eta}^{\varepsilon};L^r(\mathbb{R}^d))}\\
&= \sup_{0\leq s\leq t\wedge\tau_{M_\eta}^{\varepsilon}} \bigg\| \int_{0}^{s } \int_{B} S_{s -r}\Big[\Big(\sum_{j=1}^{m}\mathrm{i}  z_{j} g_{j}(X^{\psi_{\varepsilon}}(r))-\sum_{j=1}^{m}\mathrm{i}  z_{j} g_{j}(Y^{\psi_{\varepsilon}}(r))\Big)(\psi_{\varepsilon}(r,z)-1)\Big] \nu(\mathrm{d} z) \mathrm{d} r \bigg\|_{L^2(\mathbb{R}^d)}\\
&\hspace{0.5cm}+ \bigg\| \int_{0}^{\cdot } \int_{B} S_{\cdot -r}\Big[\Big(\sum_{j=1}^{m}\mathrm{i}  z_{j} g_{j}(X^{\psi_{\varepsilon}}(r))-\sum_{j=1}^{m}\mathrm{i}  z_{j} g_{j}(Y^{\psi_{\varepsilon}}(r))\Big)(\psi_{\varepsilon}(r,z)-1)\Big] \nu(\mathrm{d} z) \mathrm{d} r  \bigg\|_{L^p(0,t\wedge\tau_{M_\eta}^{\varepsilon};L^r(\mathbb{R}^d))}\\
&\leq \int_{0}^{t\wedge\tau_{M_\eta}^{\varepsilon} } \int_{B}\Big\| \Big(\sum_{j=1}^{m}\mathrm{i}  z_{j} g_{j}(X^{\psi_{\varepsilon}}(r))-\sum_{j=1}^{m}\mathrm{i}  z_{j} g_{j}(Y^{\psi_{\varepsilon}}(r))\Big)(\psi_{\varepsilon}(r,z)-1)\Big\|_{L^2(\mathbb{R}^d)} \nu(\mathrm{d} z) \mathrm{d} r\\
&\hspace{0.5cm}+  C\int_{0}^{t\wedge\tau_{M_\eta}^{\varepsilon} }\bigg\| \int_B \Big(\sum_{j=1}^{m}\mathrm{i}  z_{j} g_{j}(X^{\psi_{\varepsilon}}(r))-\sum_{j=1}^{m}\mathrm{i}  z_{j} g_{j}(Y^{\psi_{\varepsilon}}(r))\Big)(\psi_{\varepsilon}(r,z)-1)\nu(\dif z)\bigg\|_{L^2(\mathbb{R}^d)}\dif r\\
&\leq C \int_{0}^{t\wedge\tau_{M_\eta}^{\varepsilon} } \int_{B}\Big\| \Big(\sum_{j=1}^{m}\mathrm{i}  z_{j} g_{j}(X^{\psi_{\varepsilon}}(r))-\sum_{j=1}^{m}\mathrm{i}  z_{j} g_{j}(Y^{\psi_{\varepsilon}}(r))\Big)\Big\|_{L^2(\mathbb{R}^d)}
|\psi_{\varepsilon}(r,z)-1| \nu(\mathrm{d} z) \mathrm{d} r\\
&\leq C\int_0^{t\wedge\tau_{M_\eta}^{\varepsilon}} \int_B|z|_{\mathbb{R}^m}\Big\|\Big(\sum_{j=1}^m|g_{j}(X^{\psi_{\varepsilon}}(r))-g_{j}(Y^{\psi_{\varepsilon}}(r))|^2\Big)^{\frac12}\Big\|_{L^2(\mathbb{R}^d)}|\psi_{\varepsilon}(r,z)-1| \nu(\mathrm{d} z) \mathrm{d} r\\
&\leq C_m\int_0^{t\wedge\tau_{M_\eta}^{\varepsilon}} \int_B\big\|X^{\psi_{\varepsilon}}(r)-Y^{\psi_{\varepsilon}}(r)\big\|_{L^2(\mathbb{R}^d)} |z|_{\mathbb{R}^m}|\psi_{\varepsilon}(r,z)-1| \nu(\mathrm{d} z) \mathrm{d} r\\
&:= C_m\int_0^{t\wedge\tau_{M_\eta}^{\varepsilon}} \big\|X^{\psi_{\varepsilon}}(r)-Y^{\psi_{\varepsilon}}(r)\big\|_{L^2(\mathbb{R}^d)} k_{\varepsilon}(r) \dif r,
 \end{align*}
where $ k_{\varepsilon}(s) = \int_B |z|_{\mathbb{R}^m}|\psi_{\varepsilon}(s,z)-1| \nu(\mathrm{d} z)$.

Combining the above estimates, we get, for any $l\in[0,t]$,
\begin{align*}
&\big\|X^{\psi_{\varepsilon}}-Y^{\psi_{\varepsilon}}\big\|_{ L^{\infty}(0,l\wedge\tau_{M_\eta}^{\varepsilon}; L^2(\mathbb{R}^d)) \cap L^{p}(0, l\wedge\tau_{M_\eta}^{\varepsilon} ; L^r(\mathbb{R}^d))}\\
& \leq  C\|X^{\psi_\varepsilon}(0)-Y^{\psi^{\varepsilon}}(0)\|_{L^2(\mathbb{R}^d)}\\
 &\hspace{0.5cm}+C|  \lambda| t^{1-\frac{d\sigma}2} ({M_\eta}+C_N)^{2\sigma}\|X^{\psi_\varepsilon}-Y^{\psi_\varepsilon}\|_{L^{\infty}(0,t\wedge\tau_{M_\eta}^{\varepsilon}; L^2(\mathbb{R}^d)) \cap L^{p}(0, t\wedge\tau_{M_\eta}^{\varepsilon} ; L^r(\mathbb{R}^d))}\\
&\hspace{0.5cm}+ \|\Pi_2^{\psi^{\varepsilon}}\|_{L^{\infty}(0,T; L^2(\mathbb{R}^d)) \cap L^{p}(0, T; L^r(\mathbb{R}^d))}+\varepsilon C_mK_2 \|u_0\|_{L^{2}(\mathbb{R}^{d})}\\
&\hspace{0.5cm}+C_m\int_0^l \big\|X^{\psi_{\varepsilon}}-Y^{\psi_{\varepsilon}}\big\|_{L^{\infty}(0,s\wedge\tau_{M_\eta}^{\varepsilon}; L^2(\mathbb{R}^d)) \cap L^{p}(0, s\wedge\tau_{M_\eta}^{\varepsilon} ; L^r(\mathbb{R}^d))} k_{\varepsilon}(s) \dif s.
\end{align*}
Applying Gronwall's Lemma, and noting that, according to Lemma \ref{lemma-est-ass},
\begin{align*}
     k_{\varepsilon}\in L^1(0,T;\mathbb{R}^+)\quad\text{and}\quad \int_0^T k_{\varepsilon}(s)  \dif s\leq C_N,
\end{align*}
we infer, for any $t\in[0,T]$,
\begin{align*}
&\big\|X^{\psi_{\varepsilon}}-Y^{\psi_{\varepsilon}}\big\|_{ L^{\infty}(0,t\wedge\tau_{M_\eta}^{\varepsilon}; L^2(\mathbb{R}^d)) \cap L^{p}(0, t\wedge\tau_{M_\eta}^{\varepsilon} ; L^r(\mathbb{R}^d))}\\
&\leq \Big(C\|X^{\psi_\varepsilon}(0)-Y^{\psi^{\varepsilon}}(0)\|_{L^2(\mathbb{R}^d)}\\
&\hspace{0.5cm}+
C|  \lambda| t^{1-\frac{d\sigma}2} ({M_\eta}+C_N)^{2\sigma}\|X^{\psi_\varepsilon}-Y^{\psi_\varepsilon}\|_{L^{\infty}(0,t\wedge\tau_{M_\eta}^{\varepsilon}; L^2(\mathbb{R}^d)) \cap L^{p}(0, t\wedge\tau_{M_\eta}^{\varepsilon} ; L^r(\mathbb{R}^d))}\\
&\hspace{0.5cm}+ \|\Pi_2^{\psi^{\varepsilon}}\|_{L^{\infty}(0,T; L^2(\mathbb{R}^d)) \cap L^{p}(0, T; L^r(\mathbb{R}^d))}+\varepsilon C_mK_2 \|u_0\|_{L^{2}(\mathbb{R}^{d})}
\Big)e^{C_m\int_0^T  k_{\varepsilon}(s) \dif s}\\
&\leq C_N\Big(\|X^{\psi_\varepsilon}(0)-Y^{\psi^{\varepsilon}}(0)\|_{L^2(\mathbb{R}^d)}\\
&\hspace{0.5cm}+
|  \lambda| t^{1-\frac{d\sigma}2} ({M_\eta}+C_N)^{2\sigma}\|X^{\psi_\varepsilon}-Y^{\psi_\varepsilon}\|_{L^{\infty}(0,t\wedge\tau_{M_\eta}^{\varepsilon}; L^2(\mathbb{R}^d)) \cap L^{p}(0, t\wedge\tau_{M_\eta}^{\varepsilon} ; L^r(\mathbb{R}^d))}\\
&\hspace{0.5cm}+ \|\Pi_2^{\psi^{\varepsilon}}\|_{L^{\infty}(0,T; L^2(\mathbb{R}^d)) \cap L^{p}(0, T; L^r(\mathbb{R}^d))}+\varepsilon C_mK_2 \|u_0\|_{L^{2}(\mathbb{R}^{d})}
\Big).
\end{align*}
By choosing $t_{\eta,N}\in(0,T]$ small enough, depending on $M_\eta$ and $N$ and independing on $\varepsilon$, such that $ C_N|  \lambda| t_{\eta,N}^{1-\frac{d\sigma}2} ({M_\eta}+C_1)^{2\sigma}<\frac12$, we infer from the above inequality
\begin{align}
&\big\|X^{\psi_{\varepsilon}}-Y^{\psi_{\varepsilon}}\big\|_{ L^{\infty}(0,t_{\eta,N}\wedge\tau_{M_\eta}^{\varepsilon}; L^2(\mathbb{R}^d)) \cap L^{p}(0, t_{\eta,N}\wedge\tau_{M_\eta}^{\varepsilon} ; L^r(\mathbb{R}^d))}\nonumber\\
&\leq 2C_N\Big(\|X^{\psi_\varepsilon}(0)-Y^{\psi^{\varepsilon}}(0)\|_{L^2(\mathbb{R}^d)}
+ \hspace{0.5cm}\|\Pi_2^{\psi^{\varepsilon}}\|_{L^{\infty}(0,T; L^2(\mathbb{R}^d)) \cap L^{p}(0, T; L^r(\mathbb{R}^d))}+\varepsilon \|u_0\|_{L^{2}(\mathbb{R}^{d})}
\Big)\nonumber\\
&= 2C_N\Big(\|\Pi_2^{\psi^{\varepsilon}}\|_{L^{\infty}(0,T; L^2(\mathbb{R}^d)) \cap L^{p}(0, T; L^r(\mathbb{R}^d))}+\varepsilon C_m K_2\|u_0\|_{L^{2}(\mathbb{R}^{d})}\Big).\label{proof-prof-2-eq-10}
\end{align}
To obtain the last equality, we have used $X^{\psi_\varepsilon}(0)=Y^{\psi^{\varepsilon}}(0)=u_0$.
Similaly, we have
\begin{align*}
&\big\|X^{\psi_{\varepsilon}}-Y^{\psi_{\varepsilon}}\big\|_{ L^{\infty}(t_{\eta,N}\wedge\tau_{M_\eta}^{\varepsilon},(2t_{\eta,N})\wedge\tau_{M_\eta}^{\varepsilon}; L^2(\mathbb{R}^d)) \cap L^{p}(t_{\eta,N}\wedge\tau_{M_\eta}^{\varepsilon}, (2t_{\eta,N})\wedge\tau_{M_\eta}^{\varepsilon} ; L^r(\mathbb{R}^d))}\\
&\leq 2C_N\Big(\|X^{\psi_\varepsilon}(t_{\eta,N}\wedge\tau_{M_\eta}^{\varepsilon})-Y^{\psi^{\varepsilon}}(t_{\eta,N}\wedge\tau_{M_\eta}^{\varepsilon})\|_{L^2(\mathbb{R}^d)}
+ \|\Pi_2^{\psi^{\varepsilon}}\|_{L^{\infty}(0,T; L^2(\mathbb{R}^d)) \cap L^{p}(0, T; L^r(\mathbb{R}^d))}\\
&\hspace{1cm}+\varepsilon C_m K_2 \|u_0\|_{L^{2}(\mathbb{R}^{d})}
\Big)\\
&\leq 2C_N(2C_N+1)\Big(\|\Pi_2^{\psi^{\varepsilon}}\|_{L^{\infty}(0,T; L^2(\mathbb{R}^d)) \cap L^{p}(0, T; L^r(\mathbb{R}^d))}+\varepsilon C_m K_2 \|u_0\|_{L^{2}(\mathbb{R}^{d})}
\Big),
\end{align*}
where the last inequality follows from \eqref{proof-prof-2-eq-10}.

Iterating similar estimates for $1\leq k\leq [\frac{T}{t_{\eta,N}}]$, there exists a constant $C_{k,N}$ such that
\begin{align*}
&\big\|X^{\psi_{\varepsilon}}-Y^{\psi_{\varepsilon}}\big\|_{ L^{\infty}((kt_{\eta,N})\wedge\tau_{M_\eta}^{\varepsilon},((k+1)t_{\eta,N})\wedge\tau_{M_\eta}^{\varepsilon}; L^2(\mathbb{R}^d)) \cap L^{p}(t_{\eta,N}\wedge\tau_{M_\eta}^{\varepsilon}, (2t_{\eta,N})\wedge\tau_{M_\eta}^{\varepsilon} ; L^r(\mathbb{R}^d))}\\
&\leq 2C_N\Big(\|X^{\psi_\varepsilon}((kt_{\eta,N})\wedge\tau_{M_\eta}^{\varepsilon})-Y^{\psi^{\varepsilon}}((kt_{\eta,N})\wedge\tau_{M_\eta}^{\varepsilon})\|_{L^2(\mathbb{R}^d)}
+ \|\Pi_2^{\psi^{\varepsilon}}\|_{L^{\infty}(0,T; L^2(\mathbb{R}^d)) \cap L^{p}(0, T; L^r(\mathbb{R}^d))}\\
&\hspace{1cm}+\varepsilon C_m K_2\|u_0\|_{L^{2}(\mathbb{R}^{d})}
\Big)\\
&\leq C_{k,N}\Big(\|\Pi_2^{\psi^{\varepsilon}}\|_{L^{\infty}(0,T; L^2(\mathbb{R}^d)) \cap L^{p}(0, T; L^r(\mathbb{R}^d))}+\varepsilon C_mK_2\|u_0\|_{L^{2}(\mathbb{R}^{d})}
\Big).
\end{align*}
Hence, we can find a constant $C_{\frac{T}{t_{\eta,N}},N}$ such that
\begin{align*}
&\big\|X^{\psi_{\varepsilon}}-Y^{\psi_{\varepsilon}}\big\|_{ L^{\infty}(0,T\wedge\tau_{M_\eta}^{\varepsilon}; L^2(\mathbb{R}^d)) \cap L^{p}(0, T\wedge\tau_{M_\eta}^{\varepsilon}; L^r(\mathbb{R}^d))}\\
&\leq C_{\frac{T}{t_{\eta,N}},N}\Big(\|\Pi_2^{\psi^{\varepsilon}}\|_{L^{\infty}(0,T; L^2(\mathbb{R}^d)) \cap L^{p}(0, T; L^r(\mathbb{R}^d))}+\varepsilon C_mK_2 \|u_0\|_{L^{2}(\mathbb{R}^{d})}
\Big).
\end{align*}
Integrate over $\Omega$ and apply \eqref{prop-proof-est-I2} to obtain
\begin{align*}
&\mathbb{E}\big\|X^{\psi_{\varepsilon}}-Y^{\psi_{\varepsilon}}\big\|_{ L^{\infty}(0,T\wedge\tau_{M_\eta}^{\varepsilon}; L^2(\mathbb{R}^d)) \cap L^{p}(0, T\wedge\tau_{M_\eta}^{\varepsilon} ; L^r(\mathbb{R}^d))}\\
    &\leq  C_{\frac{T}{t_{\eta,N}},N}\big( C_{p,m}K_1^{\frac1p}\big(\varepsilon^{\frac{1}2}+\varepsilon^{\frac{p-1}{p}}\big)+ \varepsilon C_mK_2 \big)\|u_0\|_{L^{2}(\mathbb{R}^{d})}
    \rightarrow 0,\quad \quad \text{as }\varepsilon\rightarrow0.
\end{align*}
It follows that, for any $\delta>0$, $\eta>0$ and $M_\eta$(appearing in Lemma \ref{lem uniform}),
\begin{align*}
&\limsup_{\varepsilon\rightarrow0}P\Big(\big\|X^{\psi_{\varepsilon}}-Y^{\psi_{\varepsilon}}\big\|_{ E^{(T,p)}}\geq \delta \Big)\\
&=\limsup_{\varepsilon\rightarrow0}P\Big(\big\|X^{\psi_{\varepsilon}}-Y^{\psi_{\varepsilon}}\big\|_{ L^{\infty}(0,T; L^2(\mathbb{R}^d)) \cap L^{p}(0, T ; L^r(\mathbb{R}^d))}\geq \delta \Big)\\
&\leq \limsup_{\varepsilon\rightarrow0}P\Big(\big\|X^{\psi_{\varepsilon}}-Y^{\psi_{\varepsilon}}\big\|_{ L^{\infty}(0,T\wedge\tau_{M_\eta}^{\varepsilon}; L^2(\mathbb{R}^d)) \cap L^{p}(0, T\wedge\tau_{M_\eta}^{\varepsilon} ; L^r(\mathbb{R}^d))}\geq \delta,\;\tau_{M_\eta}^{\varepsilon}=T \Big)\\
&\quad+ \limsup_{\varepsilon\rightarrow0}P\Big(\tau_{M_\eta}^{\varepsilon}<T \Big)\\
&\leq \limsup_{\varepsilon\rightarrow0}P\Big(\big\|X^{\psi_{\varepsilon}}-Y^{\psi_{\varepsilon}}\big\|_{ L^{\infty}(0,T\wedge\tau_{M_\eta}^{\varepsilon}; L^2(\mathbb{R}^d)) \cap L^{p}(0, T\wedge\tau_{M_\eta}^{\varepsilon} ; L^r(\mathbb{R}^d))}\geq \delta,\;\tau_{M_\eta}^{\varepsilon}=T \Big)\\
&\quad+ \limsup_{\varepsilon\rightarrow0}P\Big(\big\|X^{\psi_{\varepsilon}}\big\|_{L^{p}(0, T;L^{r}(\mathbb{R}^{d}))} \geqslant {M_\eta}\Big)\\
&\leq \limsup_{\varepsilon\rightarrow0}\frac{1}{\delta}\mathbb{E}\big\|X^{\psi_{\varepsilon}}-Y^{\psi_{\varepsilon}}\big\|_{ L^{\infty}(0,T\wedge\tau_{M_\eta}^{\varepsilon}; L^2(\mathbb{R}^d)) \cap L^{p}(0, T\wedge\tau_{M_\eta}^{\varepsilon} ; L^r(\mathbb{R}^d))}+\eta\\
&\leq\eta,
\end{align*}
which completes the proof of Proposition \ref{lem LDP 2}.

\end{proof}

\end{document}